\title{\vspace{-2cm}
Analysis of Oscillations and Defect Measures for the Quasineutral Limit in Plasma Physics
}
\author{\textbf{Donatella Donatelli and Pierangelo Marcati}\\
        {\small Dipartimento di Matematica Pura ed Applicata}\\
       {\small Universit\`a degli Studi dell'Aquila}\\
       {\small 67100 L'Aquila, Italy}\\
        $\scriptstyle\mathtt{\{donatell, marcati@univaq.it\}}$\\}
\newcommand{\e}{\varepsilon}		       
\newcommand{\R}{\mathbb{R}}
\newcommand{\FF}{\mathcal{F}}
\newcommand{\ue}{u^{\varepsilon}}
\newcommand{\ut}{\tilde{u}}
\newcommand{\st}{\tilde{\sigma}}
\newcommand{\Vt}{\tilde{V}}
\newcommand{\re}{\rho^{\varepsilon}}
\newcommand{\rt}{\tilde{\rho}}
\newcommand{\se}{\sigma^{\varepsilon}}
\newcommand{\la}{\lambda}
\newcommand{\rl}{\rho^{\lambda}}
\newcommand{\sls}{\sigma^{\lambda}}
\newcommand{\vl}{V^{\lambda}}
\newcommand{\ul}{u^{\lambda}}
\newcommand{\el}{E^{\lambda}}
\newcommand{\wl}{w^{\lambda}}
\newcommand{\dive}{\mathop{\mathrm {div}}}
\newtheorem{theorem}{Theorem}[section]
\newtheorem{Main}{Main Theorem}[]
\newtheorem{corollary}[theorem]{Corollary}
\newtheorem{lemma}[theorem]{Lemma}
\newtheorem{proposition}[theorem]{Proposition}
\theoremstyle{definition}
\newtheorem{remark}[theorem]{Remark}
\begin{document}
\maketitle
\begin{abstract}
We perform a rigorous   analysis of  the  quasineutral limit for a hydrodynamical model of a viscous plasma represented by the Navier Stokes Poisson system in $3-D$. We show that as $\la\to 0$ the velocity field $\ul$ strongly converges towards an incompressible velocity vector field $u$ and the density fluctuation $\rl-1$ weakly converges to zero. In general the limit velocity field cannot be expected to satisfy the incompressible Navier Stokes equation, indeed the presence of high frequency oscillations strongly affects the quadratic nonlinearities and we have to take care of self interacting wave packets. We shall provide a detailed mathematical description  of the convergence process by using microlocal defect measures and by developing an explicit correctors analysis.  Moreover we will be able to identify an explicit pseudo parabolic pde satisfied by the leading correctors terms. Our results include all the previous results in literature, in particular we show that the formal limit holds rigorously in the case of well prepared data.

\medbreak 
\textbf{Key words and phrases:} 
compressible and incompressible Navier Stokes equation,  energy 
estimates, hyperbolic equations, Klein-Gordon equations, rescaling, dispersive estimates, microlocal defect measures 
\medbreak
\textbf{1991 Mathematics Subject Classification.} Primary 35L65; Secondary
35L40, 76R50.
\end{abstract}
\tableofcontents
\newpage
\section{Introduction and plan of the paper}
\subsection{Introduction}
In this paper we perform a rigorous   analysis of  the so called quasineutral limit for a hydrodynamical model of a viscous plasma represented by the Navier Stokes Poisson system in $3-D$, namely
\begin{equation}
\partial_{t}{\rho^{\lambda}}+\dive(\rho^{\lambda} u^{\lambda})=0, \label{1} 
\end{equation}
\begin{equation}
\partial_{t}(\rho^{\lambda} u^{\lambda})+\dive(\rho^{\lambda} u^{\lambda}\otimes u^{\lambda})+\nabla (\rho^{\lambda})^{\gamma}=\mu\Delta u^{\lambda}+(\nu+\mu)\nabla\dive u^{\lambda}+\rho^{\lambda}\nabla V^{\lambda}, \label{2}
\end{equation}
\begin{equation}
 \lambda^2 \Delta V^{\lambda} = \rho^{\lambda} - 1. \label{3}
\end{equation}
Let us denote by $x\in\R^{3}$, $t\geq 0$, the space and time variable, $\rho(x,t)$ the {\em negative charge density}, $m(x,t)=\rho(x,t)u(x,t)$ the {\em current density}, $u(x,t)$ the {\em velocity field}, $V(x,t)$ the {\em electrostatic potential}, $\mu, \nu$ the {\em shear viscosity} and {\em bulk viscosity} respectively. The parameter $\lambda$ is the so called {\em Debye length} (up to a constant factor).\\
We show that as $\la\to 0$ the velocity field $\ul$ strongly converges towards an incompressible velocity vector field $u$ and the density fluctuation $\rl-1$ weakly converges to zero.In general the limit velocity field cannot be expected to satisfy the incompressible Navier Stokes equation, indeed the presence of high frequency oscillations strongly affects the quadratic nonlinearities and we have to take care of self interacting wave packets.  In the paper we shall provide a detailed analysis of the convergence process by using microlocal defect measures and by developing an explicit correctors analysis. Moreover we will be able to identify an explicit pseudo parabolic equation satisfied by by the leading correctors terms.

The previous system can be seen as the coupling of the compressible Navier Stokes equations \eqref{1}, \eqref{2} with a Poisson equation \eqref{3}, where in dimensionless units the coupling constant can be expressed in terms of a parameter $\la$ which represents the scaled Debye length, which  is a characteristic physical parameter related to the phenomenon of the so called ``Debye shielding'', \cite{GR95}, studied by Peter Debye in 1912.  Any charged particle inside a plasma attracts other particles with opposite charge and repels those with the same charge, thereby creating a net cloud of opposite charges around itself, this cloud shields the particle's own charge from external view and then causes the particle's Coulomb field to fall off exponentially at large radii, rather than falling off as $1/r^{2}$.  So the physical meaning of the Debye length $\lambda$ is  the distance over which the usual Coulomb field is killed off exponentially by the polarization of the plasma. In terms of physical variables the Debye length can be expressed as
\begin{equation}
\la=\la_{D}/L  \qquad \la_{D}=\sqrt{\frac{\e_{0}k_{B}T}{e^{2}n_{0}}},
\end{equation}
where $L$ is the macroscopic length scale, $\e_{0}$ is the vacuum permittivity, $k_{B}$ the Boltzmann constant, $T$ the average plasma temperature, $e$ the absolute electron charge and $n_{0}$ the average plasma density.  In many cases the Debye length is very small compared to the macroscopic length $\la_{D} << L$ and so it makes sense to consider the quasineutral limit $\la \to 0$ of the system \eqref{1}-\eqref{3}. In this situation the particle density is constrained to be close to the background density (equal to one in our case) of the oppositely charged particle. The limit $\la \to 0$ is called the quasineutral limit since the charge density almost vanishes identically. The velocity of the fluid then evolves according to the incompressible Navier Stokes flow.
 This type of limit has been studied by many authors. In the case of Euler Poisson system by Cordier and Grenier \cite{CG00}, Grenier \cite{G95}, Cordier, Degond, Markowich and Schmeiser \cite{CDMS96}, Loeper \cite{L05}, Peng, Wang and  Yong \cite{PWY06}, in the case of the Navier Stokes Poisson system by Wang \cite{W04} and Jiang and Wang \cite{JW06} and in the contest of a combined quasineutral and relaxation time limit by Gasser and Marcati in \cite{GM01a, GM01b, GM03}.
This paper is still a mathematical theoretical approach to this complicate physical problem which however removes many regularity and smallness assumptions of various papers in the literature see for instance Wang \cite{W04} and Jiang and Wang \cite{JW06}. In fact Wang \cite{W04}	studied the	quasineutral limit for the smooth solution with well-prepared initial data. Wang and Jiang  \cite{JW06} studied the combined quasineutral and inviscid limit of the compressible Navier- Stokes-Poisson system for weak solution and obtained the convergence of Navier- Stokes-Poisson system to the incompressible Euler equations with general initial data. Moreover in  \cite{JW06} the vanishing of viscosity coefficient was required in order to take the quasineutral limit and no convergence rate was derived therein. The authors in  \cite{DM08}  investigated the quasineutral limit of the isentropic Navier-Stokes-Poisson system in the whole space  and obtained the convergence of weak solution of the Navier-Stokes-Poisson system to the weak solution of the incompressible Navier- Stokes equations by means of dispersive estimates of Strichartz's type under the assumption that the Mach number is related to the Debye length. Ju, Li and Wang \cite{JLW08} studied the quasineutral limit of the isentropic Navier-Stokes-Poisson system both in the whole space and in the torus without the restriction on viscous coefficient with well prepared initial data. However there is no analysis for the quasineutral limit for the Navier Stokes Poisson system in the context of weak solutions and in the framework of general ill prepared initial data. The common feature of this kind of limits in the ill prepared data framework is the high plasma oscillations, namely the presence of high frequency  time oscillations along the acoustic waves. 
In these phenomena there are different behaviors of the various vector fields acting in our system. Particularly relevant us to understand the relationship between high frequency interacting waves, dispersive behavior and the different role of time and space oscillations.  In our analysis the velocity fields both disperse and oscillates however the dispersion behavior dominates on the high frequency time oscillations and Strichartz estimates are sufficient to pass into the limit of the convective term. The presence of quadratic terms on the electric field (e.g. $\rl\nabla\vl$) cannot be analyzed in the same way since the dispersive behavior no longer dominates on time hight frequency wave packets. In the general case these quadratic terms will not vanish in the limit as $\la\to 0$, unless we have well prepared initial data.

\subsection{Plan of the paper}
The structure of this paper, as well as the main ingredients of our approach to this limiting process can be summarized as follows.
\begin{itemize}
\item In Section 2 we collect many needed mathematical tools, including notations, Strichartz estimates and microlocal defect measures. Then in Section 3 we set up our problem.
\item  The following section 5 is devoted to obtain a priori estimates independent of $\la$, namely standard energy bounds and dispersive estimates on the density fluctuation. The main idea here is based on the observation that the density fluctuation $\rl -1$ satisfies a Klein-Gordon equation, so acoustic waves analysis for the Navier Stokes Poisson system  \eqref{1}-\eqref{3} follows by reading the system as a dispersive  equation and we will get uniform estimates in $\lambda$ by the use of he $L^{p}$-type estimates due to Strichartz \cite{GV95, KT98, S77}. The particular type of  Strichartz estimates for the Klein Gordon equation that we  use here can be recovered from the seminal paper by Strichartz \cite{S77} (where he studied the homogenous equation) and by using Duhamel's principle. 
\item In the previous sections  we get sufficient bounds in order to study the limiting behaviour of the velocity vector field. Therefore in Section 5 we analyze separately the limiting behaviour of the divergence free part and the gradient part of $u^{\lambda}$. Accordingly we obtain the strong convergence of the velocity field 
\item The next stumbling block is to get enough compactness for the electric field in order to pass into the limit in the quadratic term $\la\nabla\vl\otimes\la\nabla\vl$. Since $\la\nabla\vl$ is bounded in $L^{\infty}_{t}L^{2}_{x}$ we can define microlocal defect measure $\nu^{E}$ introduced by P. G\`erard in \cite{Ger91} and by L.Tartar (H-measure) in \cite{Tar90} with correctors $E^{+}$ and $E^{-}$ to handle time oscillations at frequency $1/\la$. An analogous use of the P. G\`erard  and L. Tartar ideas can be found in Y. Brenier and E.Grenier \cite{BG94} and E. Grenier \cite{G95a}, regarding the Vlasov Poisson system. This will be done in Section 6.
\item In Section 7  we will be able to prove our Main Theorem \ref{tm1}.
\item As a final step, in Section 8, we show that in the case of smooth solutions for the system \eqref{1}-\eqref{3} the class of correctors $E^{+}$ and $E^{-}$ is not empty and they satisfy a ``pseudo parabolic'' type equation, see the Main Theorem \ref{tm2}.
\end{itemize}

\section{Preliminaries}
For convenience of the reader we establish some notations and recall some basic facts that will be useful in the sequel.
\subsection{Notations}
If $F,G$ are functions we denote by $F\lesssim G$ the fact that there exists $c\in \R$ such that $F\leq G$. Then, we will denote by  
\begin{itemize}
\item[a)] $\mathcal{D}(\R ^d \times \R_+)$ the space of test function $C^{\infty}_{0}(\R^d \times \R_+)$, by $\mathcal{D}'(\R^d \times\R_+)$ the space of Schwartz distributions and $\langle \cdot, \cdot \rangle$ the duality bracket between $\mathcal{D}'$ and $\mathcal{D}$
\item[b)] $W^{k,p}(\R^{d})=(I-\Delta)^{-\frac{k}{2}}L^{p}(\R^{d})$ and $H^{k}(\R^{d})=W^{k,2}(\R^{d})$ the nonhomogeneous Sobolev spaces, for any $1\leq p\leq \infty$ and $k\in \R$. $\dot W^{k,p}(\R^{d})=(-\Delta)^{-\frac{k}{2}}L^{p}(\R^{d})$ and $\dot H^{k}(\R^{d})=W^{k,2}(\R^{d})$  denote the homogeneous Sobolev spaces. The notations $L^{p}_{t}L^{q}_{x}$ and $L^{p}_{t}W^{k,q}_{x}$ will abbreviate respectively  the spaces $L^{p}([0,T];L^{q}(\R^{d}))$, and $L^{p}([0,T];W^{k,q}(\R^{d}))$.
\item[c)] $L^{p}_{2}(\R^{d})$ the Orlicz space defined as follows
\begin{equation}
\label{2.1}
L^{p}_{2}(\R^{d})=\{f\in L^{1}_{loc}(\R^{d})\mid |f|\chi_{|f|\leq \frac{1}{2}}\in L^{2}(\R^{d}),\ |f|\chi_{|f|> \frac{1}{2}}\in L^{p}(\R^{d})\},
\end{equation}
see \cite{Ada75}, \cite{LPL96} for more details.
\item[d)]  $ \mathcal{L}(\R^{3})$ the space of
bounded operators, $ \mathcal{K}(\R^{3})$ the space of compact operators,
\item[ e)] if $X$, $Y$ are Banach spaces, $ \mathcal{L}(X,Y)$ is the 
space of bounded operators
\item[f)] $Q$ and $P$ respectively  the Leray's projectors $Q$ on the space of gradients vector fields and $P$ on the space of divergence - free vector fields. Namely
\begin{equation}
Q=\nabla \Delta^{-1}\dive, \qquad P=I-Q.
\label{pr}
\end{equation} 
It is well known that   $Q$ and $P$  can be expressed in terms of Riesz multipliers, therefore they are  bounded linear operators on every $W^{k,p}$ $(1< p<\infty)$ space (see \cite{Ste93}).   \\ \\
\end{itemize}
Next we recall the basic notations concerning pseudo-differential operators and symbols to be used
later on. We refer to \cite{Tay91} for details. Assuming $\rho, \delta
\in [0,1],\ m \in \R$, we denote $S^{m}_{\rho, \delta}$ the set of $C^\infty$ symbols satisfying $$\left| D^\beta _x D^\alpha _\xi p(x,\xi)\right|\leq C_{\alpha, \beta}\langle \xi \rangle
^{m-\rho|\alpha|+\delta|\beta|}$$ for all $\alpha, \beta$, where
$\langle \xi \rangle =(1+|\xi|^2)^{1/2}$. In such case we say that the
associated operator denoted by $OP(p(x,\xi))$ is given by 
$$P(x,D)f(x)=\int p(x, \xi) \mathcal{F}f(\xi) e^{i x \xi}d\xi:=OP(p(x,\xi))$$ 
(where$\mathcal{F}f
(\xi)=(2\pi)^{-n}\int f(x) e^{-ix \xi}dx $ denotes the Fourier transform of
the function $f$) belongs to $OPS^{m}_{\rho, \delta}$. If there
are smooth symbols $p_{m-j}(x,\xi)$, homogeneous in $\xi$ of degree $m-j$ for
$|\xi|\geq 1$, i.e.  $p_{m-j}(x,r\xi)=r^{m-j}p_{m-j}(x,\xi)$ for $r>0,\
|\xi|\geq 1$, and if $$p(x,\xi)\sim \sum_{j\geq 0} p_{m-j}(x,\xi)$$ in
the sense that $$p(x,\xi)-\sum_{j\geq 0}^{N} p_{m-j}(x,\xi)\in
S^{m-N}_{1,0}
$$ for all $N$, then we say $p(x,\xi)\in S^m$ and $P(x,D)$ is polyhomogenous of order $m$. If $\Omega$ is an open set in $\R^{3}$, we denote by $\psi_{comp}^{m}(\Omega,\mathcal{L}(H))$, respectively, $\psi_{comp}(\Omega,\mathcal{K}(H))$ the space of polyhomogenous pseudo-differential operators of order $m$ on $\Omega$, with values in $\mathcal{L}(H)$, respectively $\mathcal{K}(H)$ whose kernel is compactly supported in $\Omega\times\Omega$, moreover we recall that if $P\in \psi_{comp}^{m}(\Omega,\mathcal{L}(H))$, then its symbol $p(x,\xi)$ is \ a \  linear application from $\psi_{comp}^{m}(\Omega,\mathcal{L}(H))$ to $C^{\infty}_{0}(S^{*}\Omega, \mathcal{L}(H))$, where $S^{*}\Omega=S^{d-1}\times\Omega$.

Following P. G\`erard we say that $\mu$ is the {\em microlocal defect measure} (or following L. Tartar the {\em H-measure}) for a bounded sequence $w_{k}$ in $L^{2}$ if for any $A\in \psi_{comp}^{0}(\omega, \mathcal{K}(H))$ one has (up to subsequences)
$$\lim_{k\to \infty}(A(w_{k}-w), (w_{k}-w))=\int_{S^{\ast}\Omega}tr(a(x,\xi)\mu(dxd\xi)).$$
where $A=OP(a(x,\xi))$.

\subsection{Technical tools}
\subsubsection{Strichartz estimates for Klein Gordon equations}

Let us recall that if  $w$ is a solution of the following Klein Gordon  equation in the space $[0,T]\times \R^{d}$
\begin{equation*}
\left(-\frac{\partial ^{2}}{\partial t}+\Delta-m^{2}\right)w(t,x)=F(t,x)\\
\end{equation*}
with Cauchy data 
\begin{equation*}
w(0,\cdot)=f,\quad \partial_{t}w(0,\cdot)=g,
\end{equation*}
where $m>0$ is the mass and $0<T<\infty$, 
then $w$ satisfies the following Strichartz estimates, (see \cite{S77})
\begin{equation*}
\|w\|_{L^{q}_{t,x}}+\|\partial_{t}w\|_{L^{q}_{t}W^{-1,q}_{x}}\lesssim \|f\|_{\dot H^{1/2}_{x}}+\|g\|_{\dot H^{-1/2}_{x}}+\|F\|_{L^{p}_{t,x}},
\label{s2'}
\end{equation*}
where $(q,p)$,  are \emph{admissible pairs}, namely they satisfy 
\begin{equation*}
\frac{2(n+1)}{n+3}\leq p\leq\frac{2(n+2)}{n+4} \qquad 
\frac{2(n+2)}{n}\leq p\leq\frac{2(n+1)}{n-1}.
\end{equation*}
In particular in the case of $d=3$, $(q,p)$ are admissible if they satisfy
\begin{equation*}
\frac{4}{3}\leq p \leq \frac{10}{7} \qquad \frac{10}{3}\leq q \leq \frac{4}{3}. 
\end{equation*}
Moreover by choosing $p=4/3$ and $q=4$ and by a standard application of Duhamel's principle we have the following estimate
\begin{equation}
\|w\|_{L^{4}_{t,x}}+\|\partial_{t}w\|_{L^{4}_{t}W^{-1,4}_{x}}\lesssim \|f\|_{\dot H^{1/2}_{x}}+\|g\|_{\dot H^{1/2}_{x}}+\|F\|_{L^{1}_{t}L^{2}_{x}}.
\label{s1}
\end{equation}
It is straightforward to observe that for any $s\geq 0$ also this  estimate hold
\begin{equation}
\|w\|_{L^{4}_{t}W^{-s,4}_{x}}+\|\partial_{t}w\|_{L^{4}_{t}W^{-1-s,4}_{x}}\lesssim \|f\|_{ H^{1/2-s}_{x}}+\|g\|_{ H^{-1/2-s}_{x}}+\|F\|_{L^{1}_{t}H^{-s}_{x}}.
\label{s2}
\end{equation}
(it is sufficient to  apply the operator $(I-\Delta)^{-s/2}$ to \eqref{s1}).

\subsubsection{Properties for pseudo-differential operators}
We recall here two fundamental tools necessary to work with pseudodifferential operators (fore more details see \cite{Tay91}, \cite{Tay81},  \cite{Ger91}, )
\begin{proposition}
\label{p2.2}
If $A\in OPS^{0}$, then 
$$A:L^{2}_{loc}(\Omega, H)\rightarrow L^{2}(\Omega, H)$$
is bounded.
\end{proposition}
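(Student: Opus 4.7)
The plan is to reduce the local statement to the classical global $L^2$-boundedness for pseudo-differential operators of order zero, and then invoke the Hörmander square-root construction within the symbol calculus. First I would exploit the hypothesis that $A \in \psi_{comp}^{0}$, i.e. the Schwartz kernel $K_A$ is compactly supported in $\Omega \times \Omega$. Choose $\chi \in C_0^\infty(\Omega)$ equal to $1$ on a neighborhood of the $x$-projection of $\mathrm{supp}(K_A)$, and $\psi \in C_0^\infty(\Omega)$ equal to $1$ on the $y$-projection. For any $u \in L^2_{loc}(\Omega,H)$ one has $A u = \chi\, A(\psi u)$, and $\psi u \in L^2(\Omega,H)$ with $\|\psi u\|_{L^2} \leq C_\psi \|u\|_{L^2(K)}$ for a fixed compact $K \subset \Omega$. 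Moreover $\chi A$ is supported in a fixed compact set, so if we establish boundedness $A : L^2 \to L^2$ (after extending $A$ trivially to all of $\R^d$), the result follows.

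Next, to obtain the classical $L^2$-boundedness of $A \in OPS^0$, I would follow the standard Hörmander trick. Let $M > \sup_{x,\xi}|a(x,\xi)|^2 + 1$ and set $b(x,\xi) = \bigl(M - |a(x,\xi)|^2\bigr)^{1/2}$. Since $a \in S^0_{1,0}$ and $M - |a|^2 \geq 1$, one checks $b \in S^0_{1,0}$. By the symbolic calculus for $OPS^0$ operators, $B := OP(b)$ satisfies
\begin{equation*}
B^*B = M \cdot I - A^*A + R, \qquad R \in OPS^{-1}.
\end{equation*}
Consequently, for $u$ in the Schwartz class,
\begin{equation*}
\|Au\|_{L^2}^2 = \langle A^*A u,u\rangle = M\|u\|_{L^2}^2 - \|Bu\|_{L^2}^2 + \langle Ru, u\rangle \leq M \|u\|_{L^2}^2 + |\langle Ru, u\rangle|.
\end{equation*}

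Finally, I would close the bound by controlling $R$. Operators in $OPS^{-k}$ for $k$ large enough are bounded on $L^2$ by a direct Schur/kernel argument (their kernels decay sufficiently off the diagonal and are integrable along rows and columns). Then the same square-root trick, applied inductively going up one order at a time from $-N$ to $0$, propagates $L^2$-boundedness through $OPS^{-N+1}, OPS^{-N+2}, \ldots, OPS^{-1}$, which handles the remainder $R$ above. Combined with the cutoff reduction, this yields the claimed boundedness $A : L^2_{loc}(\Omega,H) \to L^2(\Omega,H)$. The main obstacle is of course the justification of the symbolic identity for $B^*B$ and the inductive $L^2$-boundedness for negative orders, but these are standard calculations in the pseudo-differential calculus that are spelled out in the references \cite{Tay81, Tay91}, so in the paper I would simply cite them rather than reproduce them.
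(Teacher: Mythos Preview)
Your proposal is correct and follows the standard H\"ormander square-root argument for $L^2$-boundedness of order-zero pseudo-differential operators, together with the natural cutoff reduction from the compact-kernel hypothesis. There is nothing to compare, however: the paper does not prove this proposition at all. It is stated in the preliminaries section as a recalled fact, with the line ``We recall here two fundamental tools necessary to work with pseudodifferential operators (for more details see \cite{Tay91}, \cite{Tay81}, \cite{Ger91})'' and no proof given. Your sketch is essentially the argument one finds in those references (in particular Taylor's books), so you have correctly anticipated both the content and your own final remark that one would simply cite rather than reproduce.
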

\begin{proposition}[\bf{Generalized Rellich Theorem}]
\label{p2.3}
If $A\in \psi_{comp}^{m}(\Omega, \mathcal{K}(H))$ for some $m<0$, then
$$A:L^{2}_{loc}(\Omega, H)\rightarrow L^{2}(\Omega, H)$$
is compact, i.e. if $w_{k}\rightharpoonup w$ weakly in $L^{2}$, then $\|Aw_{k}-w\|\to 0$ strongly. 
\end{proposition}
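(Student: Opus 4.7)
The plan is to combine two ingredients: the classical Rellich compactness for scalar negative-order pseudodifferential operators with compactly supported kernel, and the norm density of finite-rank operators inside $\mathcal{K}(H)$, which lets us replace the operator-valued symbol of $A$ by a finite-rank-valued symbol up to small error. Without the compact-operator values, the conclusion fails (take $a(x,\xi)=\varphi(x)\langle\xi\rangle^{m}I_{H}$ with $\dim H=\infty$), so every step must genuinely use $a\in\mathcal{K}(H)$.

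First I would reduce to data supported in a fixed compact set. Because the Schwartz kernel $K_{A}$ is compactly supported in $\Omega\times\Omega$, pick $\varphi\in C^{\infty}_{0}(\Omega)$ equal to $1$ on the $x$-projection of $\mathrm{supp}\,K_{A}$; then $Aw_{k}=A(\varphi w_{k})$, and $\varphi w_{k}\rightharpoonup \varphi w$ in $L^{2}(\Omega,H)$ with common compact support. It therefore suffices to prove that $A:L^{2}(\Omega,H)\to L^{2}(\Omega,H)$ is compact.

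Second I would approximate the symbol. The symbol $a(x,\xi)\in S^{m}_{1,0}(\Omega;\mathcal{K}(H))$ is compactly supported in $x$, and $\langle\xi\rangle^{-m}a(x,\xi)$ together with its derivatives lies in a bounded set of $\mathcal{K}(H)$. Using norm density of finite-rank operators in $\mathcal{K}(H)$ and a partition of unity in $(x,\xi)$, one can construct $a_{\varepsilon}\in S^{m}_{1,0}$, still compactly supported in $x$, with values in a finite-dimensional subspace $F_{\varepsilon}\subset\mathcal{K}(H)$, such that the relevant symbol seminorms of $a-a_{\varepsilon}$ are bounded by $\varepsilon$. Applying the operator-valued Calder\'on--Vaillancourt estimate yields $\|A-OP(a_{\varepsilon})\|_{\mathcal{L}(L^{2}(\Omega,H))}\lesssim\varepsilon$. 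Writing then $a_{\varepsilon}(x,\xi)=\sum_{j=1}^{N(\varepsilon)}b_{j}(x,\xi)T_{j}$ with scalar $b_{j}\in S^{m}_{comp}(\Omega)$ and $T_{j}\in\mathcal{K}(H)$ a basis of $F_{\varepsilon}$, one gets $OP(a_{\varepsilon})=\sum_{j}OP(b_{j})\otimes T_{j}$. Each scalar operator $OP(b_{j})$ has negative order and compactly supported kernel, hence factors through the compact embedding $H^{|m|}_{comp}\hookrightarrow L^{2}$ and is compact on $L^{2}(\Omega)$ by the classical Rellich--Kondrachov theorem; tensored with the compact operator $T_{j}$ on $H$, each summand is compact on $L^{2}(\Omega,H)$ (the tensor of two compact Hilbert-space operators is a norm limit of finite-rank tensors, hence compact), and a finite sum of compacts is compact. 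Therefore $A$ is the operator-norm limit of compact operators and is itself compact, which is exactly the strong $L^{2}$-convergence of $Aw_{k}$ along weakly convergent sequences claimed in the statement.

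The main obstacle is the second step: turning the pointwise relative compactness of $\{a(x,\xi)\}$ inside $\mathcal{K}(H)$ into a uniform finite-rank approximation at the level of symbols, preserving both the symbol class $S^{m}_{1,0}$ and the $x$-compact support, with control good enough to feed into the Calder\'on--Vaillancourt bound. The remaining pieces are either standard (classical Rellich, tensor of compacts) or an immediate consequence of $\mathrm{supp}\,K_{A}$ being compact.
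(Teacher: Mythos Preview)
The paper does not actually prove this proposition. It is stated in Section~2.2.2 as a recalled preliminary fact, with the references \cite{Tay91}, \cite{Tay81}, \cite{Ger91} given for details; no argument appears in the paper itself. So there is no ``paper's own proof'' to compare against.

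That said, your outline is the standard route to this result and is essentially correct. The reduction to compactly supported data via a cutoff equal to $1$ on the $x$-projection of $\mathrm{supp}\,K_{A}$ is exactly right, and the strategy of approximating $A$ in operator norm by operators with finite-rank-valued symbols, each of which is compact by the scalar Rellich theorem tensored with a finite-rank map on $H$, is the argument one finds (in one form or another) in G\'erard's paper. You correctly flag the only nontrivial technical point: producing a finite-rank-valued symbol $a_{\varepsilon}$ close to $a$ in the $S^{m}_{1,0}$ seminorms, not just pointwise. One clean way to close this gap is to split with a smooth cutoff $\chi(\xi/R)$: the high-frequency part $OP\bigl((1-\chi(\xi/R))a\bigr)$ has small operator norm because $m<0$, while on the compact region $\{|\xi|\le 2R\}\times\mathrm{supp}_{x}a$ the continuous $\mathcal{K}(H)$-valued map $a$ has relatively compact range, hence can be uniformly approximated by a finite-rank-valued smooth symbol. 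With that refinement your argument goes through.
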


\subsubsection{Convolution estimate and compactness theorems}
Here we state the following elementary lemma that will be used later on.
\begin{lemma}
Let us consider  a smoothing kernel $j\in C^{\infty}_{0}(\R^{d})$, such that $j\geq 0$, $\int_{\R^{d}}j dx=1$, and let us define
\begin{equation*}
j_{\alpha}(x)=\alpha^{-d}j\left(\frac{x}{\alpha}\right).
\end{equation*}
Then  for any $f\in \dot H^{1}(\R^{d})$, one has
\begin{equation}
\label{y1}
\|f-f\ast j_{\alpha}\|_{L^{p}(\R^{d})}\leq C_{p}\alpha^{1-d\left(\frac{1}{2}-\frac{1}{p}\right)}\|\nabla f\|_{L^{2}(\R^{d})},
\end{equation}
where
\begin{equation*}
p\in [2, \infty)
\quad \text{if $d=2$}, \quad p\in [2, 6] \quad \text{if $d=3$}.
\end{equation*}
Moreover the following Young type inequality hold
\begin{equation}
\label{y2}
\|f\ast j_{\alpha}\|_{L^{p}(\R^{d})}\leq C\alpha^{s-d\left(\frac{1}{q}-\frac{1}{p}\right)}\|f\|_{W^{-s,q}(\R^{d})},
\end{equation}
for any $p,q\in [1, \infty]$, $q\leq p$,  $s\geq 0$, $\alpha\in(0,1)$.
\label{ly}
\end{lemma}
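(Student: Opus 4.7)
The plan is to treat \eqref{y1} and \eqref{y2} independently, reducing both to the combination of Plancherel / Young's convolution inequality with the natural dilation scaling of the mollifier $j_\alpha$.

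For the oscillation bound \eqref{y1}, I would verify the two endpoint cases and then interpolate. At $p=2$, Plancherel and the identity $\widehat{f-f\ast j_\alpha}(\xi)=\hat f(\xi)\bigl(1-\hat j(\alpha\xi)\bigr)$, together with the control $|1-\hat j(\alpha\xi)|\leq \alpha|\xi|\,\|\nabla\hat j\|_{L^\infty}$ (which follows from $\hat j\in \mathcal{S}$ and $\hat j(0)=1$), give
\[
\|f-f\ast j_\alpha\|_{L^2}\lesssim \alpha\,\bigl\||\xi|\hat f\bigr\|_{L^2}=\alpha\|\nabla f\|_{L^2},
\]
which is exactly $\alpha^{1-d(1/2-1/2)}=\alpha$. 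At the Sobolev endpoint $p^{\ast}=2d/(d-2)$ (so $p^{\ast}=6$ in dimension $d=3$), the critical embedding $\dot H^1(\R^d)\hookrightarrow L^{p^{\ast}}(\R^d)$ together with the commutation $\nabla(f\ast j_\alpha)=(\nabla f)\ast j_\alpha$ yields
\[
\|f-f\ast j_\alpha\|_{L^{p^{\ast}}}\lesssim \|\nabla f-(\nabla f)\ast j_\alpha\|_{L^2}\lesssim \|\nabla f\|_{L^2},
\]
i.e.\ the estimate with $\alpha^{1-d(1/2-1/p^{\ast})}=\alpha^{0}$. Log-convex interpolation (Riesz--Thorin, or simply Hölder in the spatial variable) between these two estimates produces every intermediate $p\in[2,p^{\ast}]$ with exactly the announced power of $\alpha$; the planar range $p\in[2,\infty)$ is handled identically, with a non-critical Sobolev embedding replacing the critical one.

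The bound \eqref{y2} for $s=0$ is immediate from Young's convolution inequality $\|f\ast j_\alpha\|_{L^p}\leq \|f\|_{L^q}\|j_\alpha\|_{L^r}$ with $1+1/p=1/q+1/r$, combined with the scaling identity $\|j_\alpha\|_{L^r}=\alpha^{d/r-d}\|j\|_{L^r}=\alpha^{-d(1/q-1/p)}\|j\|_{L^r}$. For $s>0$ I would use the Bessel potential representation underlying the definition $W^{-s,q}=(I-\Delta)^{s/2}L^{q}$, writing $f$ as the image of some $h\in L^q$ with $\|h\|_{L^q}\simeq \|f\|_{W^{-s,q}}$ and pushing the Bessel potential onto the mollifier: $f\ast j_\alpha=h\ast (I\mp\Delta)^{\,\cdot}j_\alpha$. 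Young's inequality then reduces matters to controlling the $L^r$ norm of a fractional power of the Laplacian applied to $j_\alpha$, for which the clean homogeneous identity $(-\Delta)^{\sigma/2}j_\alpha=\alpha^{-\sigma}\bigl[(-\Delta)^{\sigma/2}j\bigr]_\alpha$, together with the scaling $\|\,[\cdot]_\alpha\|_{L^r}=\alpha^{-d(1/q-1/p)}\|\cdot\|_{L^r}$, supplies the power of $\alpha$. Combining the two factors and using $j\in\mathcal{S}(\R^d)$ (so that the $L^r$ norm of $(-\Delta)^{\sigma/2}j$ is finite) gives the announced bound.

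The main technical point is precisely this last step: because $(I-\Delta)^{s/2}$ is not genuinely homogeneous, the clean rescaling is only available for the purely fractional Laplacian component. One must therefore decompose the symbol $\langle\xi\rangle^{s}$ into its high-frequency part $|\xi|^{s}$ and a low-frequency Schwartz-class remainder (a Littlewood--Paley splitting, or a direct argument on the explicit Bessel kernel), and control the remainder separately. The restriction $\alpha\in(0,1)$ in the statement is exactly what guarantees that the frequency scale $|\xi|\sim \alpha^{-1}$ carried by $j_\alpha$ lies in the regime where the homogeneous piece dominates; once this dyadic reduction is in place, both \eqref{y1} and \eqref{y2} follow by bookkeeping of the exponents, the only genuine inequality used beyond Young/Plancherel being the critical Sobolev embedding $\dot H^{1}\hookrightarrow L^{p^{\ast}}$ in the proof of \eqref{y1}.
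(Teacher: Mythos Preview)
The paper states this lemma without proof, labelling it ``elementary'', so there is nothing in the text to compare your argument against. Your approach is the standard one and is sound: for \eqref{y1} the Plancherel bound at $p=2$, the Sobolev-endpoint bound at $p=2d/(d-2)$, and interpolation in between is exactly how this estimate is usually established; for \eqref{y2} transferring the Bessel potential onto $j_\alpha$ and then invoking Young's inequality together with the dilation scaling is the natural route, and your identification of the inhomogeneous-versus-homogeneous symbol issue (and of the role of the restriction $\alpha<1$) is accurate. One small caveat on the planar case of \eqref{y1}: in $d=2$ there is no endpoint embedding $\dot H^{1}\hookrightarrow L^{p}$, so ``a non-critical Sobolev embedding'' is not quite available; the clean substitute is the Gagliardo--Nirenberg inequality $\|g\|_{L^{p}}\lesssim\|g\|_{L^{2}}^{2/p}\|\nabla g\|_{L^{2}}^{1-2/p}$ applied to $g=f-f\ast j_\alpha$, which yields the exponent $\alpha^{2/p}=\alpha^{1-2(1/2-1/p)}$ directly.

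One further point worth making explicit: the scaling identity $(-\Delta)^{s/2}j_\alpha=\alpha^{-s}\bigl[(-\Delta)^{s/2}j\bigr]_\alpha$ that you record produces the exponent $-s-d(1/q-1/p)$, \emph{not} the $+s-d(1/q-1/p)$ printed in the statement. The printed sign is a typo: testing the stated inequality with $q=p$ and any fixed $f\in L^{p}$, the left-hand side converges to $\|f\|_{L^p}>0$ as $\alpha\to 0$ while the right-hand side would tend to zero. The applications in Section~5 (the bounds on $J_{2,1}$ and $J_{2,2}$) in fact use the corrected exponent $-s-d(1/q-1/p)$, which is precisely what your computation delivers.
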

We recall also the following compactness tool (see \cite{Si}).
\begin{theorem}
Let be $\mathcal{F}\subset L^{p}([0,T];B)$,  $1\leq p<\infty$, $B$ a Banach space. $\mathcal{F}$ is relatively compact in  $L^{p}([0,T];B)$ for $1\leq p<\infty$, or in $C([0,T];B)$ for $p=\infty$ if and only if 
\begin{itemize}
\item[{\bf (i)}]
$\displaystyle{\left\{\int_{t_{1}}^{t_{2}}f(t)dt,\ f\in \mathcal{F}\right\}}$ is relatively compact in $B$, $0<t_{1}<t_{2}<T$,
\item[{\bf (ii)}]
$\displaystyle{\lim_{h\to 0}\|f(t+h) - f(t)\|_{L^{p}([0, T-h];B)}=0}$ uniformly for any $f \in \mathcal{F}$.
\end{itemize}
\label{S}
\end{theorem}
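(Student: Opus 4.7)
The plan is to establish the two implications separately, with necessity being a direct consequence of continuity of standard operations on $L^p(0,T;B)$ and sufficiency requiring a careful time-averaging argument combined with Arzel\`a--Ascoli.

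For the necessity direction, I would argue as follows. If $\mathcal{F}$ is relatively compact in $L^p([0,T];B)$, then (i) follows because for fixed $0<t_1<t_2<T$ the linear map $f\mapsto\int_{t_1}^{t_2} f(t)\,dt$ is continuous from $L^p([0,T];B)$ to $B$ (by H\"older, using $\|\int_{t_1}^{t_2} f\|_B\le(t_2-t_1)^{1/p'}\|f\|_{L^p(0,T;B)}$), so it sends relatively compact sets to relatively compact sets. For (ii), the analogue of the Kolmogorov--Fr\'echet--Riesz criterion shows that translations are equicontinuous on any relatively compact subset of $L^p$: by a finite $\varepsilon$-net argument, it suffices to verify $\|f(\cdot+h)-f(\cdot)\|_{L^p([0,T-h];B)}\to 0$ for a single $f$, then patch uniformly over a finite cover of $\mathcal{F}$.

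The sufficiency direction is where the real work is. I would introduce the averaging operator
\begin{equation*}
M_h f(t)=\frac{1}{h}\int_t^{t+h} f(s)\,ds,\qquad t\in[0,T-h],
\end{equation*}
and study $\mathcal{F}_h:=\{M_h f:f\in\mathcal{F}\}$ as a subset of $C([0,T-h];B)$. Hypothesis (i), applied with $t_1=t$ and $t_2=t+h$, yields that $\{M_h f(t):f\in\mathcal{F}\}$ is relatively compact in $B$ for every fixed $t$. Equicontinuity in $t$ for $\mathcal{F}_h$ (with $h$ fixed) follows from a uniform bound on $\|f\|_{L^p(0,T;B)}$: indeed $\|M_h f(t+\tau)-M_h f(t)\|_B \le \frac{1}{h}\int_t^{t+\tau}\!(\|f(s+h)\|_B+\|f(s)\|_B)\,ds$, which is $O(\tau^{1/p'}/h)$ uniformly in $f$. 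By Arzel\`a--Ascoli, $\mathcal{F}_h$ is relatively compact in $C([0,T-h];B)$, hence in $L^p([0,T-h];B)$.

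To conclude, I would use (ii) to show that $M_h f\to f$ in $L^p([0,T-h];B)$ uniformly in $f\in\mathcal{F}$ as $h\to 0$: writing $M_h f(t)-f(t)=\frac{1}{h}\int_0^h(f(t+s)-f(t))\,ds$ and applying Minkowski's integral inequality gives $\|M_h f-f\|_{L^p([0,T-h];B)}\le \sup_{0<s<h}\|f(\cdot+s)-f(\cdot)\|_{L^p([0,T-s];B)}$, which tends to zero uniformly by (ii). Combining uniform approximability by the relatively compact families $\mathcal{F}_h$ yields a finite $\varepsilon$-net in $L^p([0,T-h];B)$ for every $\varepsilon>0$. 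The remaining tail $[T-h,T]$ is dealt with by choosing $h$ small so that $\sup_{f\in\mathcal{F}}\|f\|_{L^p([T-h,T];B)}<\varepsilon$, which holds because condition (ii) together with an equi-integrability argument controls the $L^p$ mass on small intervals uniformly in $\mathcal{F}$. The case $p=\infty$ is handled analogously, replacing $L^p$ convergence by uniform convergence throughout.

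The main obstacle is the sufficiency step: specifically, verifying that the pointwise compactness in (i) combines correctly with (ii) to give relative compactness of the averaged family, and ensuring the tail estimate on $[T-h,T]$ is uniform in $\mathcal{F}$. The averaging device is the crucial idea; once $M_h f$ is shown to approximate $f$ uniformly and each $\mathcal{F}_h$ is totally bounded, total boundedness of $\mathcal{F}$ itself follows by a standard $\varepsilon/3$ argument.
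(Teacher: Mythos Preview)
The paper does not actually prove this theorem: it is stated as a known compactness tool and attributed to Simon \cite{Si}, with no argument given. So there is no ``paper's own proof'' to compare against; you are supplying what the authors deliberately outsourced to the literature.

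Your sketch follows the standard route of Simon's original proof (time-averaging plus Arzel\`a--Ascoli, then uniform approximation via (ii)), and the overall strategy is sound. Two points deserve tightening. First, in the equicontinuity step for $\mathcal{F}_h$ you invoke a uniform bound $\sup_{f\in\mathcal{F}}\|f\|_{L^p(0,T;B)}<\infty$ without justification; this is not assumed, and must be derived from (i) and (ii) together (Simon does this by showing that (i) gives uniform bounds on $\|M_h f\|_{C([a,b];B)}$ on compact subintervals, then (ii) transfers this to an $L^p$ bound on $f$). Second, your tail estimate on $[T-h,T]$ is asserted via an ``equi-integrability argument'' that you do not spell out; in Simon's treatment this is again a consequence of the uniform $L^p$ bound combined with (ii), and it requires a short but nontrivial lemma. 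Neither gap is fatal---both are filled in \cite{Si}---but as written your sketch presupposes boundedness of $\mathcal{F}$ rather than deducing it.
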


\section{Statement of the problem and Main Results}

\subsection{Basic facts on the Navier Stokes Poisson System}
In order set up our problem we recall here some  results concerning the existence theory for the Navier Stokes Poisson system \eqref{1}-\eqref{3}. For simplicity we rewrite here again the compressible Navier Stokes equation coupled with the Poisson  equation
\begin{equation}
\label{3.1}
\begin{cases}
\partial_{t}{\rho^{\lambda}}+\dive(\rho^{\lambda} u^{\lambda})=0\\
\partial_{t}(\rho^{\lambda} u^{\lambda})+\dive(\rho^{\lambda} u^{\lambda}\otimes u^{\lambda})+\nabla (\rho^{\lambda})^{\gamma}=\mu\Delta u^{\lambda}+(\nu+\mu)\nabla\dive u^{\lambda}+\rho^{\lambda}\nabla V^{\lambda}\\
\lambda^{2}\Delta V^{\lambda}=\rho^{\lambda}-1.
\end{cases}
\end{equation}
To simplify our notation from now on we will set 
$$\pi^{\lambda}=\frac{(\rl)^{\gamma}-1-\gamma(\rl-1)}{(\gamma-1)} \qquad \mu=\nu=1.$$
The system \eqref{3.1} is endowed with the following initial conditions,
\begin{align}
\tag {\bf{ID}}
&\rho^{\lambda}_{t=0}=\rho^{\lambda}_{0}\geq 0, \  V^{\lambda}|_{t=0}=V_{0}^{\lambda}\notag,\\
&\rl\ul |_{t=0}=m^{\lambda}_{0}, \quad m^{\lambda}_{0}=0 \ \text{on}\ \{x\in \R^{3}\mid \rl_{0}(x)=0\},\notag\\
&\int_{\R^{3}}\left(\pi^{\la}|_{t=0}+\frac{|m^{\la}_{0}|^{2}}{2\rl_{0}}+\la^{2}|V^{\la}_{0}|\right)dx\leq C_{0}.\notag
\end{align}
The existence of global weak solutions for fixed $\la>0$ for the system \eqref{3.1}, has been proved in the case of a bounded domain in \cite{Don03} and in the case of the whole domain in \cite{DFPS01} and \cite{DFPS04}. We summarize this existence result in the following theorem.
\begin{theorem}
\label{t1}
Assume (ID),  and let $\gamma>3/2$, then there exists a global weak solution $(\rl, \ul, \vl)$ to \eqref{3.1} such that $\rl-1\in L^{\infty}((0,T);L^{\gamma}_{2}(\R^{3}))$, $\sqrt{\re}\ue\in L^{\infty}((0,T);L^{2}(\R^{3}))$, $\ul\in L^{2}((0,T);W^{1,2}(\R^{3}))$. 
Furthermore
\begin{itemize}
\item The energy inequality holds for almost every $t\geq0$,
\begin{align}
\label{2.5}
\int_{\R^{3}}&\left(\rl\frac{|\ul|^{2}}{2}+\pi^{\la}+\la^{2}|\nabla V^{\la}|^{2}\right)dx\notag\\
&+\int_{0}^{t}\int_{\R^{3}}\left(\mu|\nabla\ul|^{2}+(\nu+\mu)|\dive\ul|^{2}\right)dxds\leq C_{0}.
\end{align}
\item The continuity equation is satisfied in the sense of renormalized solutions, i.e.:
\begin{equation*}
\partial_{t}b(\rl)+\dive(b(\rl)u)+(b'(\rl)\rl-b(\rl))\dive\ul=0,
\end{equation*}
for any $b\in C^{1}(\R^{3})$ such that
$$b'(z)=\text{constant}, \quad \text {for any $z$ large enough, say $z\geq M$}.$$
\item The system \eqref{3.1} holds in $\mathcal{D'}((0,T)\times\R^{3})$.
\end{itemize}
 \end{theorem}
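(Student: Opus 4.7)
The plan is to follow the classical Lions--Feireisl construction of finite-energy weak solutions for the compressible barotropic Navier--Stokes system, adapted to accommodate the Poisson coupling, as already carried out in \cite{Don03,DFPS01,DFPS04}. First I would set up a three-level approximation: a Faedo--Galerkin discretization of the momentum equation on a finite-dimensional subspace $X_{n}$, an artificial viscosity $\varepsilon\Delta\rl$ added to the continuity equation (to prevent vacuum and place $\rl$ in $L^{2}$), and an artificial pressure perturbation $\delta(\rl)^{\beta}$ with $\beta$ large added to $(\rl)^{\gamma}$ to improve the integrability of the density. At each level the approximate system is solved by a Schauder fixed point argument: given $\ul\in X_{n}$, the regularized continuity equation is a linear parabolic problem that uniquely determines $\rl$, the Poisson equation $\la^{2}\Delta\vl=\rl-1$ is solved in $\dot H^{1}$, and the finite-dimensional momentum equation reduces to an ordinary differential system.

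Next I would derive the a priori energy estimate, since this is the only step in which the Poisson term plays a non-trivial role. Testing the momentum equation against $\ul$ and using the continuity equation produces the kinetic, internal and viscous contributions in \eqref{2.5}. For the Coulomb source, combining continuity and Poisson yields
\begin{align*}
\int_{\R^{3}}\rl\ul\cdot\nabla\vl\,dx &= -\!\int_{\R^{3}}\vl\,\dive(\rl\ul)\,dx = \int_{\R^{3}}\vl\,\partial_{t}\rl\,dx \\
&= \la^{2}\!\int_{\R^{3}}\vl\,\partial_{t}\Delta\vl\,dx = -\tfrac{1}{2}\partial_{t}\!\int_{\R^{3}}\la^{2}|\nabla\vl|^{2}\,dx,
\end{align*}
which delivers exactly the electrostatic contribution of \eqref{2.5}. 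All resulting bounds are uniform in the approximation parameters $(n,\varepsilon,\delta)$, depend only on the data through \textbf{(ID)}, and yield weak compactness of $\ul$ in $L^{2}_{t}H^{1}_{x}$, weak-$\ast$ compactness of $\rl-1$ in $L^{\infty}_{t}L^{\gamma}_{2}$, and weak-$\ast$ compactness of $\la\nabla\vl$ in $L^{\infty}_{t}L^{2}_{x}$.

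The passage to the limit is carried out in the order $n\to\infty$, $\varepsilon\to 0$, $\delta\to 0$. The electric source term is the easiest piece: since $\nabla\vl=-\la^{-2}\nabla\Delta^{-1}(\rl-1)$ and this Riesz-type operator is bounded on $L^{p}$, any strong convergence of $\rl$ in $L^{p}_{loc}$ transfers immediately to $\nabla\vl$, and hence to $\rl\nabla\vl$. The main obstacle, exactly as in the classical Lions--Feireisl theory, is the identification of the weak limit of $(\rl)^{\gamma}$. I would handle it via the effective viscous flux $(\rl)^{\gamma}-(2\mu+\nu)\dive\ul$, which is weakly continuous when tested against $\rl$; this is established by applying $\nabla\Delta^{-1}$ to the momentum equation and exploiting the div--curl cancellation in the commutator that arises. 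Combined with the DiPerna--Lions renormalization of the continuity equation --- which requires $\rl\in L^{2}_{t,x,\mathrm{loc}}$ and is precisely what forces the hypothesis $\gamma>3/2$ in three space dimensions --- this yields a.e.\ convergence of $\rl$, closes the pressure nonlinearity and at the same time delivers the renormalized equation for $b(\rl)$. The Poisson coupling introduces no new difficulty at this stage since, at fixed $\la$, it is only a linearly controlled source in the momentum equation.
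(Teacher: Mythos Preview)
The paper does not supply its own proof of this theorem; it is stated as a summary of existence results already established in \cite{Don03} (bounded domain) and \cite{DFPS01,DFPS04} (whole space), and the authors simply quote it. Your outline---three-level Lions--Feireisl approximation, energy identity with the electrostatic contribution obtained via the continuity--Poisson coupling, effective viscous flux argument plus DiPerna--Lions renormalization to recover strong convergence of the density---is precisely the scheme carried out in those references, so your proposal is correct and matches the intended proof.
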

Beside the results on the existence of weak solutions for the Cauchy problem for the Navier Stokes Poisson system \eqref{3.1} there is a theory concerning the global existence of classical solutions of \eqref{3.1} see for example \cite{MZ10} for the $H^{s}$ framework or \cite{HL09} for global solutions in Besov spaces. We describe this global existence result in the following theorem.
\begin{theorem}
Assume that $(\rl_{0}-1, m_{0})\in H^{s}(\R^{3})\cap L^{1}(\R^{3})$, $s\geq 4$, with $\delta=\|(\rl_{0}-1, m_{0})\|_{H^{s}(\R^{3})\cap L^{1}(\R^{3})}$ small. Then, there is a unique global classical solution $(\rl, m^{\la},\vl)$ to the system \eqref{3.1} satisfying
$$\rl-1\in C^{0}(\R_{+}, H^{s}(\R^{3}))\cap C^{1}(\R_{+}, H^{s-1}(\R^{3})),$$
$$m\in C^{0}(\R_{+}, H^{s}(\R^{3}))\cap C^{0}(\R_{+}, H^{s-2}(\R^{3})),$$
$$\la\vl \in C^{0}(\R_{+}, L^{6}(\R^{3}))\quad \la\nabla \vl\in C^{0}(\R_{+}, H^{s+1}(\R^{3})).$$
\label{t2}
\end{theorem}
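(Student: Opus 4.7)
The plan is to prove this as a classical small-data global existence theorem, perturbing around the constant equilibrium $(\rho,u,\nabla V)=(1,0,0)$, along the lines of \cite{MZ10,HL09}. Setting $\sigma^\lambda=\rho^\lambda-1$ and $m^\lambda=\rho^\lambda u^\lambda$, the system \eqref{3.1} becomes a quasilinear symmetric hyperbolic--parabolic system with a nonlocal Poisson coupling. The crucial structural observation is that, upon differentiating the continuity equation in $t$ and taking the divergence of the momentum equation, the linearized $\sigma^\lambda$ satisfies a damped Klein--Gordon equation
\[
\partial_t^2\sigma^\lambda-(2\mu+\nu)\Delta\partial_t\sigma^\lambda-\gamma\Delta\sigma^\lambda+\frac{1}{\lambda^2}\sigma^\lambda=\text{nonlinear},
\]
in which the plasma-frequency mass $1/\lambda^2$ comes from \eqref{3}. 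The divergence-free component $Pm^\lambda$ decouples from $\sigma^\lambda$ and obeys a heat equation, so the linearized semigroup contains both a damped Klein--Gordon branch and a purely parabolic branch.

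First I would establish local existence and uniqueness in $H^s$ with $s\geq 4$ by a standard Picard iteration on the Friedrichs-symmetrized system; the Sobolev embedding $H^{s-1}\hookrightarrow W^{1,\infty}$ keeps the coefficients in $L^\infty$, and smallness of $\|\sigma^\lambda\|_{L^\infty}\lesssim\|\sigma^\lambda\|_{H^s}$ preserves $\rho^\lambda>0$. A standard continuation criterion then says that the solution persists as long as $\|(\sigma^\lambda,m^\lambda)\|_{W^{1,\infty}}$ remains bounded.

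To extend to $T^\ast=+\infty$ I would run a bootstrap argument on
\[
\mathcal{E}(t)=\sup_{0\leq\tau\leq t}\Bigl(\|(\sigma^\lambda,m^\lambda)(\tau)\|_{H^s}+(1+\tau)^{3/4}\|(\sigma^\lambda,m^\lambda)(\tau)\|_{L^\infty}\Bigr),
\]
combining two ingredients. The first is a high-order energy estimate obtained by commuting $\partial^\alpha$ ($|\alpha|\leq s$) through the symmetrized system and using Moser-type inequalities, yielding
\[
\frac{d}{dt}\|(\sigma^\lambda,m^\lambda)\|_{H^s}^2+c\|\nabla m^\lambda\|_{H^s}^2\lesssim\|(\sigma^\lambda,m^\lambda)\|_{W^{1,\infty}}\|(\sigma^\lambda,m^\lambda)\|_{H^s}^2,
\]
together with an auxiliary hypocoercive bound that transfers damping from $m^\lambda$ to $\sigma^\lambda$ via the coupling. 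The second is the Duhamel representation against the linearized semigroup plus the $L^1\cap L^2\to L^p$ decay of its Green function, which exploits the $L^1$ hypothesis on the data and gives the $(1+t)^{-3/4}$ bound on the $L^\infty$-norm in three dimensions. Under the inductive assumption $\mathcal{E}(t)\leq 2\delta$, the nonlinear source in Duhamel's formula is majorized by $\int_0^t(1+t-\tau)^{-3/4}\mathcal{E}(\tau)^2\,d\tau\lesssim\delta^2$, which closes the bootstrap for $\delta$ small enough and upgrades the local solution to a global one with the stated regularity.

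The main obstacle, to my mind, is the sharp decay analysis for the linearized Navier--Stokes--Poisson semigroup. Its Fourier symbol is neither a standard heat nor a standard Klein--Gordon symbol but an interpolation of the two, with the peculiar feature that low frequencies are regularized not by parabolic diffusion but by the Poisson mass term $1/\lambda^2$ (an effect absent from pure Navier--Stokes), while the dissipative parabolic rate dominates at high frequencies. A frequency-localized diagonalization and a careful resolvent expansion, in the spirit of \cite{MZ10}, are needed to identify the dominant mode in each regime before the $L^p$--$L^q$ bounds can be read off. Once those Green-function estimates are in place, the nonlinear closure sketched above is essentially routine.
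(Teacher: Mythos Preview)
Your sketch is a reasonable outline of the Matsumura--Nishida style argument carried out in \cite{MZ10} (and in Besov spaces in \cite{HL09}), and the structural points you identify---the damped Klein--Gordon equation for $\sigma^\lambda$, the parabolic decoupling of $Pm^\lambda$, the bootstrap combining high-order energy with linear decay from $L^1\cap L^2$ data---are indeed the core ingredients of those proofs.

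However, you should be aware that the paper itself does \emph{not} prove this theorem. Theorem~\ref{t2} is stated purely as a citation of the existing literature: the sentence preceding it reads ``there is a theory concerning the global existence of classical solutions of \eqref{3.1} see for example \cite{MZ10} for the $H^{s}$ framework or \cite{HL09} for global solutions in Besov spaces. We describe this global existence result in the following theorem.'' No proof, sketch, or further discussion follows; the theorem is simply quoted so that the smooth-solution framework needed for Main Theorem~\ref{tm2} (Section~8) is available. So there is nothing in the paper to compare your proposal against---what you have written is essentially a summary of the external references the paper points to, not an alternative to anything the authors do.
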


\subsection{Main results}
Having collected all the preliminary material we are now ready to state our main results.
The first result concerns the convergence of solutions of the system \eqref{3.1} in the quasineutral regime.
\begin{Main}
Let $(\rl, \ul, V^{\la})$ be a sequence of weak solutions in $\R^{3}$ of the system (\ref{3.1}), assume that the initial data satisfy (ID). Then
\begin{itemize}
\item [\bf{(i)}] $\rl\longrightarrow 1$ \quad weakly in $L^{\infty}([0,T];L^{k}_{2}(\R^{3}))$.
  \item [\bf{(ii)}] There exists $u\in L^{\infty}([0,T];L^{2}(\R^{3}))\cap L^{2}([0,T];\dot H^{1}(\R^{3}))$ such that 
  \begin{equation*}
\ul\rightharpoonup u \quad \text{weakly in $L^{2}([0,T];\dot H^{1}(\R^{3}))$}.
\end{equation*}
  \item [\bf{(iii)}] The gradient component $Q\ul$ of the vector field $\ul$ satisfies
  \begin{equation*}
Q\ul\longrightarrow 0\quad \text{ strongly in $L^{2}([0,T];L^{p}(\R^{3}))$, for any $p\in [4,6)$}.
\end{equation*}
 \item [\bf{(iv)}] The divergence free component $P\ul$ of the vector field $\ul$ satisfies
   \begin{equation*}
P\ul\longrightarrow Pu=u\quad \text{strongly  in $L^{2}([0,T];L^{2}_{loc}(\R^{3}))$}.
\end{equation*}
\item [\bf{(v)}] There exist correctors $E^{+}$, $E^{-}$  in $L^{\infty}((0,T),L^{2}(\R^{3}))$ and a  positive microlocal defect measure $\nu^{E}$ on $\R^{3}\times S^{2}$ depending measurably on $t$, associated to the electric field $E^{\la}=\nabla\vl$, such that for all pseudodifferential operators $A\in \psi_{comp}^{0}(\R^{3},\mathcal{K}(\R^{3}))$, and of symbol $a(x,\xi)$ and for all $\phi\in \mathcal{D}(0,t)$ one has
\begin{align}
\lim_{\la\to 0}\int dt\phi(t) \la^{2}(A\el,\el)&=\int dt\phi(t) (AE^{+},E^{+})+\int dt\phi(t) (AE^{-},E^{-})\notag\\
&+\int dt\phi(t) \!\!\int_{\R^{3}\times S^{2}}tr\left(a(x,\xi)\frac{\xi\otimes \xi}{|\xi|^{2}}\right)d\nu^{E}.
\label{6.1a}
\end{align}
 \item [\bf{(vi)}] $u=Pu$ satisfies the following equation
\begin{align}
P\Big(\partial_{t} u&-\Delta u+(u\cdot\nabla)u- \notag\\
& \dive(E^{+}\otimes E^{+}+E^{-}\otimes E^{-})-\dive\langle \nu^{E}, \frac{\xi\otimes \xi}{|\xi|^{2}}\rangle\Big)=0,
\label{nsd}
\end{align}
in $\mathcal{D}'([0,T]\times \R^{3})$.
\end{itemize}
\label{tm1}
\end{Main}
\begin{remark}
\label{r1}
In the previous theorem we constructed a defect measure $\nu^{E}$ and the correctors $E^{\pm}$. They correspond to the physical phenomenon of the high frequency plasma oscillation. Notice that the correctors $E^{\pm}$ remain important as $\la\to 0$ and are not vanishing, in fact we  don't have initial layer but on the contrary the effect of ill prepared initial data appears through $E^{\pm}$ and remains important for all times.
\end{remark}
As we will see in the rest of paper the construction of the defect measure $\nu^{E}$ will be done by using the theory developed by P. G\`erard in \cite{Ger91} and L. Tartar in \cite{Tar90}. The explicit construction of the correctors is not trivial and requires a smooth setting for the solutions. We will show this part in the next theorem.
\begin{Main}
\label{tm2}
Let be $(\rl, \ul, V^{\la})$ be a sequence of the Navier Stokes Poisson system, satisfying for $s\geq 4$ 
\begin{equation}
\|\rl-1\|_{L^{\infty}(0,T;H^{s}(\R^{3}))}\leq C \qquad \|\la \el\|_{L^{\infty}(0,T;H^{s}(\R^{3}))}\leq C
\end{equation}
then, for all $s'<s-2$
\begin{equation}
\ul-\frac{1}{i}e^{-it/\la}E^{+}-\frac{1}{i}e^{it/\la}E^{-}\longrightarrow v
\quad \text {strongly in $C^{0}(0,T, H^{s'-1}_{loc}(\R^{3}))$.}
\end{equation}
\begin{equation}
\la(\el-e^{-it/\la}E^{+}-e^{it/\la}E^{-})\longrightarrow 0
\quad \text {strongly in $C^{0}(0,T, H^{s'-1}_{loc}(\R^{3}))$.}
\end{equation}
and $E^{\pm}$ satisfy
\begin{equation}
\label{ceq}
\partial_{t} E^{\pm}-\Delta E^{\pm}+Q \dive(v\otimes E^{\pm})=0, \qquad PE^{\pm}=0.
\end{equation} 
\end{Main}
In the Section 8 we will show in the Proposition \ref{p8} the existence of solutions for the equation \eqref{ceq}. The rest of the paper is devoted to prove the Main Theorems \ref{tm1} and \ref{tm2}.

\section{Uniform estimates}
In this section we wish to establish all the a priori estimates, independent on $\la$, for the solutions of the system \eqref{3.1} which are necessary to prove the Main Theorem \ref{tm1}. 

\subsection{Consequences of the energy estimate}
We start by collecting  all the a priori bounds that are a consequence of the energy inequality \eqref{2.5}.
Before going on let us define the density fluctuation $\sls$ as
\begin{equation}
\label{3.1.1}
\sls=\rl-1.
\end{equation}
\begin{proposition}
\label{p4.1}
Let us consider the solution $(\rl,\ul, \vl)$ of the Cauchy problem for the system \eqref{3.1}. Assume that the hypotheses (ID) hold, then it follows
\begin{align}
& \sls&\qquad&\text{is bounded in $L^{\infty}([0,T];L^{k}_{2}(\R^{3}))$, where $k=\min(\gamma,2)$},\label{4.1.1}\\
&\nabla\ul &\qquad&\text{is bounded in $L^{2}([0,T]\times\R^{3})$},\label{4.1.2bis}\\
&\ul &\qquad&\text{is bounded in $L^{2}([0,T]\times\R^{3})\cap L^{2}([0,T];L^{6}(\R^{3}))$},\label{4.1.2}\\
&\sls\ul&\qquad&\text{is bounded in $L^{2}([0,T];H^{-1}(\R^{3}))$},\label{4.1.3}\\
&\la\nabla\vl&\qquad&\text{is bounded in $L^{\infty}([0,T];L^{2}(\R^{3}))$}.\label{4.1.3b}
\end{align}
\end{proposition}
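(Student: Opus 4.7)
The plan is to extract all five estimates from the energy inequality \eqref{2.5}. Reading off the dissipative term gives \eqref{4.1.2bis} directly, and the potential piece $\la^{2}\|\nabla \vl\|_{L^{2}}^{2}$ on the left-hand side gives \eqref{4.1.3b}. The same inequality yields two auxiliary bounds that will power the remaining three estimates: the kinetic part provides $\sqrt{\rl}\,\ul \in L^{\infty}([0,T];L^{2}(\R^{3}))$, and the internal-energy part provides $\pi^{\la} \in L^{\infty}([0,T];L^{1}(\R^{3}))$.

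For \eqref{4.1.1}, I would exploit the fact that $\pi^{\la}$ is a strictly convex function of $\rl$ vanishing to second order at $\rl=1$. A Taylor expansion yields $\pi^{\la} \gtrsim (\sls)^{2}$ whenever $|\sls|\leq 1/2$, while the asymptotics $\pi^{\la}(\rl)\sim (\rl)^{\gamma}/(\gamma-1)$ as $\rl\to\infty$ gives $\pi^{\la}\gtrsim |\sls|^{\gamma}$ on $\{|\sls|>1/2\}$. Combined with the $L^{1}$ bound on $\pi^{\la}$, this matches the definition \eqref{2.1} of the Orlicz space $L^{k}_{2}$ with $k=\min(\gamma,2)$, giving \eqref{4.1.1}.

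For \eqref{4.1.2}, Sobolev embedding $\dot H^{1}(\R^{3})\hookrightarrow L^{6}(\R^{3})$ upgrades \eqref{4.1.2bis} to $\ul \in L^{2}([0,T];L^{6}(\R^{3}))$. To obtain the $L^{2}([0,T]\times\R^{3})$ part, I split $\ul = \ul\chi_{\{\rl\geq 1/2\}} + \ul\chi_{\{\rl<1/2\}}$. On the first set, $|\ul|^{2} \leq 2\rl|\ul|^{2}$ is controlled by $\sqrt{\rl}\,\ul\in L^{\infty}L^{2}$; on the second set, the bound \eqref{4.1.1} already implies that $|\{\rl<1/2\}|$ is uniformly bounded in $t$ (indeed on this set $|\sls|>1/2$ so $\pi^{\la}$ is bounded below by a positive constant), and H\"older together with the $L^{6}$ bound closes the argument.

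The genuine obstacle is \eqref{4.1.3}, whose proof uses every previous ingredient plus the hypothesis $\gamma>3/2$ from Theorem \ref{t1}. I would split $\sls$ according to its Orlicz structure: the small part is pointwise bounded by $1/2$, lies in $L^{\infty}([0,T];L^{2}(\R^{3}))$, and multiplied by $\ul\in L^{2}L^{2}$ produces an $L^{2}L^{2}\hookrightarrow L^{2}H^{-1}$ factor; the large part lies in $L^{\infty}L^{\gamma}$, and paired with $\ul\in L^{2}L^{6}$ by H\"older it belongs to $L^{2}L^{6\gamma/(6+\gamma)}$. The threshold $\gamma>3/2$ is exactly what ensures $6\gamma/(6+\gamma)\geq 6/5$, so that the dual Sobolev embedding $L^{6/5}(\R^{3})\hookrightarrow H^{-1}(\R^{3})$ applies and \eqref{4.1.3} follows. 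The subtle point is the simultaneous use of the Orlicz splitting of $\sls$ and the borderline Sobolev exponent.
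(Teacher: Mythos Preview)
Your proposal follows essentially the same route as the paper: all five bounds are read off from the energy inequality \eqref{2.5}, with \eqref{4.1.2bis} and \eqref{4.1.3b} immediate, \eqref{4.1.1} coming from the convexity/asymptotics of $\pi^{\la}$ exactly as in Lions--Masmoudi, and \eqref{4.1.2}, \eqref{4.1.3} obtained by combining the Orlicz splitting of $\sls$ with the integrability of $\ul$. Your argument for the $L^{2}_{t,x}$ bound on $\ul$ is in fact slightly cleaner than the paper's: you use directly that $\{|\sls|>1/2\}$ has finite measure and pair it with the $L^{6}$ bound, whereas the paper interpolates $\|\ul\|_{L^{2k/(k-1)}}$ between $L^{2}$ and $\dot H^{1}$ and then absorbs the sub-quadratic power of $\|\ul\|_{L^{2}}$ into the left-hand side.

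There is, however, one slip in your treatment of \eqref{4.1.3}. You place the large part of $\sls$ in $L^{\infty}_{t}L^{\gamma}_{x}$ and pair it with $\ul\in L^{2}_{t}L^{6}_{x}$, landing in $L^{2}_{t}L^{6\gamma/(6+\gamma)}_{x}$. This is fine for $3/2<\gamma\le 3$, but for $\gamma>3$ one has $6\gamma/(6+\gamma)>2$, and on $\R^{3}$ the embedding $L^{p}\hookrightarrow H^{-1}$ fails for $p>2$ (the dual Sobolev range is $6/5\le p\le 2$). The fix is immediate and is exactly what the paper does: use $k=\min(\gamma,2)$ rather than $\gamma$ for the large part, which is legitimate since on the finite-measure set $\{|\sls|>1/2\}$ one has $L^{\gamma}\subset L^{k}$. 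Then the product lands in $L^{2}_{t}L^{6k/(6+k)}_{x}$ with $6k/(6+k)\in[6/5,3/2]\subset[6/5,2]$, and the embedding into $H^{-1}$ goes through for the full range $\gamma>3/2$.
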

\begin{proof}
 From \eqref{2.5} it follows that $\pi^{\la}\in L^{\infty}([0,T];L^{1}(\R^{3}))$.   By taking into account that the function $z\rightarrow z^{\gamma}-1-\gamma(z-1)$ is convex and by following the same line of arguments as in \cite{L-P.L.M98} we get when $\gamma<2$ that
\begin{equation}
\sup_{t\geq 0}\int_{\R^{3}}\left\{|\rl-1|^{2}\chi_{|\rl-1|\leq1/2}+|\re-1|^{\gamma}\chi_{|\rl-1|\geq1/2}\right\}(t,x)dx\leq C
\label{4.1.4}
\end{equation}
and when $\gamma\geq2,$
\begin{equation}
\sup_{t\geq 0}\int_{\R^{3}}|\rl-1|^{2}(t,x)dx\leq C,
\label{4.1.5}
\end{equation}
so we can conclude that $\sls$ is uniformly bounded in $\la$ in $L^{\infty}([0,T];L^{k}_{2}(\R^{3}))$, where $k=\min(\gamma,2)$. \eqref{4.1.2bis} and \eqref{4.1.3b} are a consequence of \eqref{2.5}. The fact that $\ul\in L^{2}([0,T];L^{6}(\R^{3}))$ follows from \eqref{4.1.2bis} and by Sobolev's embeddings.
Now we prove $\ul\in L^{2}([0,T]\times\R^{3})$. 
\begin{align}
\int_{\R^{3}}|\ul|^{2}dx&=\int_{\R^{3}}\left\{|\ul|^{2}\chi_{|\rl-1|\leq1/2}+|\ul|^{2}\chi_{|\re-1|\geq1/2}\right\}dx\notag\\
&\leq 2\int_{\R^{3}}\rl |\ul|^{2}dx+2\|\re-1\|_{L^{k}_{x}}\|\ul\|_{L^{2k/k-1}_{x}}^{2}\notag\\
&\leq C_{0}+C_{0}\|\ue\|^{2-\frac{3}{k}}_{L^{2}_{x}}\|\nabla\ul\|^{\frac{3}{k}}_{L^{2}_{x}}
\label{4.1.6}
\end{align}
We complete then easily the prove by using \eqref{4.1.2bis}.
Recalling that $\gamma>3/2$ and by interpolating we get that $\ul\in L^{2}([0,T];L^{4}(\R^{3})\cap L^{2\gamma/(\gamma-1)}(\R^{3}))$. By using \eqref{4.1.1} we obtain that   $\rl\ul$ is uniformly bounded in $L^{2}([0,T];L^{4/3}(\R^{3})+L^{2k/(k+1)}(\R^{3}))$. Therefore by Sobolev's embeddings we get \eqref{4.1.3}.
\end{proof}
We want to complete this paragraph with a remark concerning the regularity of the initial data. 
\begin{remark}
\label{r2}
With the same procedure as for $\sls$, taking into account (ID) we get that $\sls_{0}$ is bounded in $L^{k}_{2}(\R^{3})$ hence in $H^{-1}(\R^{3})$, since $\gamma>3/2$.
If we rewrite $m^{\la}_{0}$ in the following way 
$$m^{\la}_{0}=\frac{m^{\la}_{0}}{\sqrt{\rl_{0}}}\sqrt{\rl_{0}}\chi_{|\rl_{0}-1|\leq 1/2}+\frac{m^{\la}_{0}}{\sqrt{\rl_{0}}}\frac{\sqrt{\rl_{0}}}{\sqrt{|\rl_{0}-1|}}\sqrt{|\rl_{0}-1|}\chi_{|\rl_{0}-1|> 1/2}$$
we get that $m^{\la}_{0}$ is bounded in $L^{2}(\R^{3})+L^{2k/(k+1)}(\R^{3})$ and hence in $H^{-1}(\R^{3})$. Finally we can conclude that
\begin{equation}
\sls_{0},\ m^{\la}_{0}\qquad \text{are bounded in $H^{-1}(\R^{3})$ uniformly in $\la$}.
\label{4.1.7}
\end{equation}
\end{remark}

\subsection{Density fluctuation acoustic equation}
From the estimates of the Proposition \ref{p4.1} we get only the weak convergence of the velocity field and unfortunately this will be not sufficient to pass into the limit in the nonlinear terms (such as the convective term $\dive(\rl\ul\otimes\ul)$) of the system \eqref{3.1}. In particular this weak convergence is induced by the rapid time oscillation of the acoustic waves or by the so called plasma oscillations. In order to overcome this problem we will estimate the density fluctuation $\sls$ uniformly with respect to $\la$. So we derive the so called acoustic equation which governs the time evolution of $\sls$. 
First of all we rewrite the system \eqref{2.5} in the following way
\begin{align}
\partial_{t}\sls+\dive(\rl \ul)&=0\label{4.2.1}\\
\partial_{t}(\rl\ul)+\nabla\sls&=\mu\Delta \ul+(\nu +\mu)\nabla\dive \ul-\dive(\rl \ul\otimes \ul)\notag\\
&-(\gamma-1)\nabla\pi^{\la}+\sls\nabla V^{\la}+\nabla V^{\la},
\label{4.2.2}\\
\la^{2}\Delta V^{\la}&=\sls.
\label{4.2.2bis}
\end{align}
Then, by differentiating with respect to time the equation \eqref{4.2.1},taking the divergence of \eqref{4.2.2} and by using \eqref{4.2.2bis} we get that $\sls$ satisfies the following equation
\begin{align}
\partial_{tt}{\sls}-\Delta \sls+\frac{\sls}{\la^{2}}=&-\dive(\mu\Delta \ul+(\nu +\mu)\nabla\dive \ul)\label{4.2.3}\\
&+\dive\left(\dive(\rl \ul\otimes \ul)+(\gamma-1)\nabla\pi^{\la}+\sls\nabla V^{\la}\right).\notag
\end{align}
It turns out that \eqref{4.2.3} is a nonhomogeneous Klein Gordon equation with mass $1/\la$. In order to get some more uniform estimates on $\sls$ we apply to \eqref{4.2.3} the Strichartz estimates \eqref{s2}.  
To renormalize the mass of  the equation \eqref{4.2.3} more easier to handle  we rescale the time and space variable, the density fluctuation, the velocity and the electric potential in the following way 
\begin{align}
\tau&=\frac{t}{\la}, \quad y=\frac{x}{\la}\label{4.2.4}\\ 
\ut(y,\tau)&=\ul(\la y,\la\tau), \quad  \rt(y,t)=\rl(\la y,\la\tau)\notag\\
\st(y,\tau)&=\sls (\la y,\la\tau),
\quad \Vt(y,\tau)=V^{\la}(\la y,\la\tau).\label{4.2.5}
\end{align}
As a consequence of this scaling the Klein Gordon equation \eqref{4.2.3} becomes of mass equal to one, namely
\begin{align}
\partial_{\tau\tau}{\st}-\Delta \st+\st&=-\frac{1}{\la}\dive(\mu\Delta \ut+(\nu +\mu)\nabla\dive \ut)\notag\\
&+\dive\left(\dive(\rt \ut\otimes \ut)+(\gamma-1)\nabla\tilde{\pi}+\st\nabla \Vt\right).\label{4.2.6}
\end{align}
Now we consider $\st=\st_{1}+\st_{2}+\st_{3}$ where $\st_{1}$, $\st_{2}$, $\st_{3}$  solve the following Klein Gordon equations
\begin{equation}
\label{4.2.7}
\begin{cases}
     \partial_{\tau\tau}\st_{1}-\Delta\st_{1} +\st_{1}=-\frac{1}{\la}\dive(\mu\Delta \ut+(\nu +\mu)\nabla\dive\ut)=F_{1} \\
     \st_{1}(x,0)=\st(x,0)=\st_{0}\quad \partial_{\tau} \st_{1}(x,0)=\partial_{\tau}\st(x,0)=\partial_{t}\st_{0},
   \end{cases}
\end{equation}    
\begin{equation}
\label{4.2.8} 
\begin{cases}   
\partial_{\tau\tau}\st_{2}-\Delta\st_{2}+\st_{2} =\dive(\dive(\rt\ut\otimes\ut)+(\gamma-1)\nabla\tilde{\pi})=F_{2}\\
\st_{2}(x,0)=\partial_{\tau}\st_{2}(x,0)=0,
 \end{cases}
 \end{equation}
\begin{equation}
\label{4.2.9}
\begin{cases}
     \partial_{\tau\tau}\st_{3}-\Delta\st_{3}+\st_{3} =-\dive(\st\nabla \Vt)=F_{3} \\
     \st_{3}(x,0)=\partial_{\tau} \st_{3}(x,0)=0,
   \end{cases}
\end{equation}  
We are able to prove the following estimate on $\sls$.
\begin{theorem}
Let us consider the solutions $(\rl, \ul, V^{\la})$ of the Cauchy problem for the system \eqref{3.1} with initial data satisfying (ID). Then for any $s_{0}\geq 3/2$,  the following estimate holds
\begin{align}
&\la^{-\frac{1}{2}}\|\sls\|_{L^{4}_{t} W^{-s_{0}-2,4}_{x}}
+\la^{-\frac{1}{2}}\|\partial_{t}\sls\|_{L^{4}_{t} W^{-s_{0}-3,4}_{x}}\notag\\
&\lesssim \la^{s_{0}-\frac{1}{2}}\|\sls_{0}\|_{H^{-3/2}_{x}}+\la^{s_{0}-\frac{1}{2}}\|m^{\la}_{0}\|_{H^{-5/2}_{x}}\notag\\
&+T\|\dive(\dive( \rl\ul\otimes\ul)-(\gamma-1)\nabla\pi^{\la})\|_{L^{\infty}_{t}H^{-s_{0}-2}_{x}}\notag\\
&+\la^{s_{0}}\|\dive \Delta\ul+\nabla\dive\ul\|_{L^{2}_{t}H^{-2}_{x}}+
T\|\dive (\sls V^{\la})\|_{L^{\infty}_{t}H^{-s_{0}-2}_{x}}.
\label{4.2.11}
\end{align}
\end{theorem}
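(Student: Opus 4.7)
The strategy is to apply the Strichartz estimate \eqref{s2} for the mass-one Klein--Gordon equation to each of the three pieces $\st_1, \st_2, \st_3$ defined in \eqref{4.2.7}--\eqref{4.2.9}, and then to pull back to the original variables via the rescaling $\sls(x,t) = \st(x/\la, t/\la)$. Choosing $s = s_0 + 2$ in \eqref{s2} and applying it to each rescaled problem yields
\begin{equation*}
\|\st_j\|_{L^4_\tau W^{-s_0-2,4}_y} + \|\partial_\tau \st_j\|_{L^4_\tau W^{-s_0-3,4}_y} \lesssim \|\st_j|_{\tau=0}\|_{H^{-s_0-3/2}_y} + \|\partial_\tau\st_j|_{\tau=0}\|_{H^{-s_0-5/2}_y} + \|F_j\|_{L^1_\tau H^{-s_0-2}_y}.
\end{equation*}
Only $\st_1$ carries non-zero Cauchy data, which, by the rescaling \eqref{4.2.5} and the continuity equation \eqref{4.2.1} at $t=0$, are $\st_1|_{\tau=0}(y) = \sls_0(\la y)$ and $\partial_\tau\st_1|_{\tau=0}(y) = -\la\dive(m^{\la}_{0})(\la y)$.

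Next I would bring each source $F_j$ into the $L^1_\tau H^{-s_0-2}_y$ norm required by Strichartz. The nonlinear sources $F_2$ (convective and pressure) and $F_3$ (electric) are controlled in $L^\infty_\tau H^{-s_0-2}_y$, so Hölder on the rescaled interval $[0, T/\la]$ produces a factor $T/\la$. The viscous source $F_1$ is controlled in $L^2_\tau H^{-2}_y \subset L^2_\tau H^{-s_0-2}_y$, and Hölder in time produces a factor $(T/\la)^{1/2}$.

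I would then pull back through $\sls(x,t) = \st(x/\la, t/\la)$. Each norm picks up a specific power of $\la$ under this change of variables: schematically,
\begin{equation*}
\|\st\|_{L^4_\tau\dot W^{-s,4}_y} \sim \la^{-s-1}\|\sls\|_{L^4_t\dot W^{-s,4}_x}, \qquad \|\st_0\|_{\dot H^s_y} \sim \la^{s-3/2}\|\sls_0\|_{\dot H^s_x},
\end{equation*}
with analogous laws for the time derivative and for the source terms. Collecting these scaling factors, the Hölder losses of the previous step, and the prefactor $\la^{-1/2}$ on the left-hand side, one recovers the three distinct powers $\la^{s_0-1/2}$ (in front of the initial data), $\la^{s_0+1/2}$ (in front of the viscous source), and $\la^{1/2}$ (in front of the convective and electric sources) advertised in the statement. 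A final triangle inequality over $j = 1, 2, 3$ closes the argument.

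The principal obstacle is the careful bookkeeping of the scaling factors: the three distinct rescalings (of space, of time, and of the Sobolev index) must interact cleanly with the two Hölder losses to reproduce the asserted exponents exactly, and any slip would alter the nature of the bound. A further delicate point is that the source $F_3$ contains the density fluctuation itself through $\dive(\st\nabla\Vt)$, so this term cannot be absorbed on the Klein--Gordon side and must instead be bounded independently via the energy inequality \eqref{4.1.3b} for $\la\nabla V^\la$ coupled with the uniform Orlicz bound \eqref{4.1.1} on $\sls$.
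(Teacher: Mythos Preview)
Your strategy is the same as the paper's: split $\st=\st_1+\st_2+\st_3$, apply the Strichartz estimate \eqref{s2} to each piece in the rescaled variables, then pull back. The one substantive difference is the regularity level at which you run Strichartz on $\st_1$. The paper does \emph{not} use $s=s_0+2$ uniformly: it applies \eqref{s2} with $s=2$ to $\st_1$ (the piece carrying the Cauchy data and the viscous forcing $F_1\in L^2_\tau H^{-2}_y$), obtaining \eqref{4.2.12} with left-hand side in $W^{-2,4}_y$, and only then embeds into the weaker $W^{-s_0-2,4}_y$ before summing with $\st_2,\st_3$. After rescaling, this embedding is exactly what generates the factors $\la^{s_0-1/2}$ in front of $\|\sls_0\|_{H^{-3/2}}$, $\|m_0^\la\|_{H^{-5/2}}$ and $\la^{s_0}$ in front of $\|\dive(\Delta\ul+\nabla\dive\ul)\|_{L^2_tH^{-2}_x}$ that appear in the statement. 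With your uniform choice $s=s_0+2$, the data land in $H^{-s_0-3/2}_y$ and the viscous term in $H^{-s_0-2}_y$; the rescaling then produces no $\la$-gain on those terms and the norms are different from the ones asserted. You would still get a valid (and in some respects sharper) inequality, but not the one stated; this is also why the exponents you report ($\la^{s_0+1/2}$, $\la^{1/2}$) do not match the statement's $\la^{s_0}$ and $\la^0$.

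On $F_3$: the theorem as stated leaves $\|\dive(\sls\nabla V^\la)\|_{L^\infty_t H^{-s_0-2}_x}$ on the right-hand side, so nothing needs to be absorbed. The paper's actual control of this term, carried out in \eqref{4.2.14}, goes through the Poisson identity $\sls\nabla V^\la=\la^2\Delta V^\la\nabla V^\la=\la^2\dive(\nabla V^\la\otimes\nabla V^\la)-\tfrac{\la^2}{2}\nabla|\nabla V^\la|^2$, which removes $\sls$ entirely and reduces matters to the energy bound \eqref{4.1.3b} alone; the Orlicz bound on $\sls$ is not used here.
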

\begin{proof}
Since $\st_{1}$, $\st_{2}$, $\st_{3}$,  are solutions of the  equations \eqref{4.2.7}, \eqref{4.2.8}, \eqref{4.2.9},  we can apply the Strichartz estimate \eqref{s2} with $(y,\tau)\in \R^{3}\times(0,T/\la)$. We start with $\st_{1}$. From \eqref{4.1.2bis} we deduce that $F_{1}\in L^{2}_{t}H^{-2}_{x}$, so by using \eqref{s2} with $s=2$ we get
\begin{align}
\|\st_{1}\|_{L^{4}_{\tau} W^{-2,4}_{y}}+\|\partial_{\tau}\st_{1}\|_{L^{4}_{\tau} W^{-3,4}_{y}}&\lesssim \|\st_{0}\|_{H^{-3/2}_{y}}+\|\partial_{\tau}\st_{0}\|_{H^{-5/2}_{y}}\notag\\
&+\la^{-1} T\|\la^{-1/2}\dive (\Delta\ut+\nabla\dive \ut)\|_{L^{2}_{\tau}H^{-2}_{x}}.
\label{4.2.12}
\end{align}
From the estimate \eqref{2.5} we have that $\rt|\ut|^{2},\ \tilde{\pi}\in L^{\infty}_{t}L^{1}_{x}$, but $L^{1}$ is continuously embedded in $H^{-s_{0}}$, $s_{0}\geq 3/2$, so we have that $F_{2}\in L^{\infty}_{t}H^{-s_{0}-2}_{x}$. If we apply \eqref{s2} to $\st_{2}$ we obtain for any $s_{0}\geq 3/2$
\begin{align}
\|\st_{2}\|_{L^{4}_{\tau} W^{-s_{0}-2,4}_{y}}&+\|\partial_{\tau}\st_{2}\|_{L^{4}_{\tau} W^{-s_{0}-3,4}_{y}}\notag\\
&\lesssim 
\la^{-1/2}T\|\dive (\dive(\rt\ut\otimes\ut)+\nabla\tilde{\pi})\|_{L^{\infty}_{\tau}H^{-s_{0}-2}_{y}}.
\label{4.2.13}
\end{align}
By using the Poisson equation \eqref{4.2.2bis} we can rewrite $F_{3}$ as $F_{3}=\dive(\dive(\nabla \Vt\otimes\nabla \Vt)+\frac{1}{2}\nabla|\nabla \Vt|^{2})$. Taking into account \eqref{2.5}, as for $F_{2}$, we get $F_{3}\in L^{\infty}_{\tau}H^{-s_{0}-2}_{x}$, for any $s_{0}\geq 3/2$. Hence $\st_{3}$ satisfies
\begin{align}
\|\st_{3}\|_{L^{4}_{\tau} W^{-s_{0}-2,4}_{y}}&+\|\partial_{\tau}\st_{3}\|_{L^{4}_{\tau} W^{-s_{0}-3,4}_{y}}\notag\\
&\lesssim \la^{-1/2} T\|\dive(\nabla \Vt\otimes\nabla \Vt)+\frac{1}{2}\nabla|\nabla \Vt|^{2}\|_{L^{\infty}_{\tau}H^{-s_{0}-2}_{y}}.
\label{4.2.14}
\end{align}
Summing up \eqref{4.2.12}, \eqref{4.2.13}, \eqref{4.2.14}, $\st$ verifies
\begin{align}
\|\st\|_{L^{4}_{\tau} W^{-s_{0}-2,4}_{x}}+\|\partial_{\tau}\se\|_{L^{4}_{t} W^{-s_{0}-3,4}_{y}}&\lesssim 
\|\st_{0}\|_{H^{-3/2}_{y}}+\|\partial_{\tau}\st_{0}\|_{H^{-5/2}_{y}}\notag\\
&+\la^{-1} \|\la^{-1/2}\dive (\Delta\ut+\nabla\dive \ut)\|_{L^{2}_{\tau}H^{-2}_{y}}\notag\\
&+\la^{-1}T\|\dive (\dive(\rt\ut\otimes\ut)+\nabla\tilde{\pi})\|_{L^{\infty}_{\tau}H^{-s_{0}-2}_{y}}\notag\\
&+\la^{-1} T\|\dive(\st\nabla\Vt)\|_{L^{\infty}_{\tau}H^{-s_{0}-1}_{y}}\label{4.2.111}
\end{align}
Finally,  since 
\begin{equation*}
\|\st\|_{L^{q}_{\tau}W^{k,p}_{y}}=\la^{-\frac{1}{q}+k-\frac{3}{p}}\|\se\|_{L^{p}([0,T];L^{q}(\R^{3}))}
\end{equation*}
and by using \eqref{4.1.7}, we end up with \eqref{4.2.11}.
\end{proof}
\section{Strong Convergence of the velocity field}
In this section we will study the strong convergence of the velocity field $\ul$. This will be achieved by studying separately the convergence of the divergence free vector field $P\ul$ and of the gradient vector field $Q\ul$.
\subsection{Strong convergence of $\mathbf{Q\ul}$}
Here we prove the convergence of $Q\ul$ to $0$. The main tool in this process lies on the fact that $Q\ul$ can be computed in terms of $\sls$, so we can  use the estimate \eqref{4.2.11} combined with the Young type inequalities \eqref{y1}, \eqref{y2}. 
\begin{proposition}
Let us consider the solution $(\rl, \ul, V^{\la})$ of the Cauchy problem for the system \eqref{3.1}. Assume that the hypotheses (ID) hold. Then  as $\la\downarrow 0$,
\begin{equation}
Q\ul \longrightarrow 0 \quad \text{strongly in $ L^{2}([0,T];L^{p}(\R^{3}))$ for any $p\in [4,6)$ }.
\label{5.1.2}
\end{equation}
\label{p5.1.1}
\end{proposition}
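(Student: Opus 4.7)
The plan is to use the continuity equation to express $Q\ul$ in terms of $\partial_{t}\sls$ and $Q(\sls\ul)$, then exploit the $\la^{1/2}$ smallness of $\sls$ and $\partial_{t}\sls$ in negative Sobolev norms provided by \eqref{4.2.11}, upgrading this to strong $L^{2}_{t}L^{p}_{x}$ convergence via a mollification argument. The quadratic remainder $Q(\sls\ul)$ will be handled separately by exploiting the Poisson equation to integrate by parts and extract a factor of $\la$.

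Since $\rl\ul=\ul+\sls\ul$, the continuity equation yields $\dive\ul=-\partial_{t}\sls-\dive(\sls\ul)$, and applying $\nabla\Delta^{-1}$ produces
\begin{equation*}
Q\ul = -\nabla\Delta^{-1}\partial_{t}\sls - Q(\sls\ul).
\end{equation*}
Let $j_{\alpha}$ be the mollifier from Lemma \ref{ly} and split $Q\ul = (Q\ul - Q\ul*j_{\alpha}) + Q\ul*j_{\alpha}$. The mollification error is controlled uniformly in $\la$ by inequality \eqref{y1} together with the energy bound $\|\nabla\ul\|_{L^{2}_{t,x}}\leq C$, giving
\begin{equation*}
\|Q\ul - Q\ul*j_{\alpha}\|_{L^{2}_{t}L^{p}_{x}} \leq C\alpha^{-1/2+3/p},
\end{equation*}
which vanishes as $\alpha\to 0$ for $p\in[4,6)$ (since $-1/2+3/p>0$). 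For the mollified term at fixed $\alpha>0$, the first contribution $(\nabla\Delta^{-1}\partial_{t}\sls)*j_{\alpha}$ is bounded by applying \eqref{y2} with $s=s_{0}+2$, $q=4$ and invoking \eqref{4.2.11}, producing a factor $C(\alpha,T)\la^{1/2}$. For the second contribution, write $Q(\sls\ul)*j_{\alpha}=Q\bigl((\sls\ul)*j_{\alpha}\bigr)$; the Poisson equation $\sls=\la^{2}\Delta V^{\la}$ followed by an integration by parts in $\mathcal{D}'$ gives
\begin{equation*}
\sls\ul = \la^{2}\dive(\nabla V^{\la}\otimes\ul) - \la^{2}(\nabla V^{\la}\cdot\nabla)\ul,
\end{equation*}
and the uniform bounds $\|\la\nabla V^{\la}\|_{L^{\infty}_{t}L^{2}_{x}}$, $\|\ul\|_{L^{\infty}_{t}L^{2}_{x}}$, $\|\nabla\ul\|_{L^{2}_{t,x}}\leq C$ make $\la^{2}\nabla V^{\la}\otimes\ul\in L^{\infty}_{t}L^{1}_{x}$ and $\la^{2}(\nabla V^{\la}\cdot\nabla)\ul\in L^{2}_{t}L^{1}_{x}$ of norm $O(\la)$. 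Young's inequality (with the extra divergence derivative moved onto $j_{\alpha}$) then yields $\|(\sls\ul)*j_{\alpha}\|_{L^{2}_{t}L^{p}_{x}}\leq C(\alpha,T)\la$, and continuity of $Q$ on $L^{p}$ for $1<p<\infty$ preserves this rate. Letting $\la\to 0$ first and then $\alpha\to 0$ finishes the proof.

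The main obstacle is precisely the nonlinear term $Q(\sls\ul)$: Strichartz alone only gives smallness of $\sls$ in a very negative Sobolev topology, while a naive H\"older estimate for the product $\sls\ul$ delivers only uniform boundedness in $L^{2}_{t}L^{3/2}_{x}$, not smallness. The Poisson rewriting $\sls=\la^{2}\Delta V^{\la}$ followed by one integration by parts is exactly what converts the distributional smallness of $\sls$ into a genuine $\la$ prefactor in a usable norm, at the harmless cost of two spatial derivatives which the mollifier absorbs.
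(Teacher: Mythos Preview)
Your argument is correct and follows the same overall architecture as the paper's proof: mollify $Q\ul$, control the high-frequency error via \eqref{y1} and the energy bound on $\nabla\ul$, and handle the low-frequency part $Q\ul*j_{\alpha}$ through the identity $Q\ul=-\nabla\Delta^{-1}\partial_{t}\sls-Q(\sls\ul)$, invoking the Strichartz bound \eqref{4.2.11} on $\partial_{t}\sls$.

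The one genuine difference is the treatment of the remainder $Q(\sls\ul)*j_{\alpha}$. The paper appeals only to the uniform bound $\sls\ul\in L^{2}_{t}H^{-1}_{x}$ from \eqref{4.1.3} together with \eqref{y2}, which gives $J_{2,1}\leq C\,\alpha^{-1-3(1/2-1/p)}$; this is bounded uniformly in $\la$ but does not by itself tend to zero, and in the paper's final summing \eqref{5.1.7} that contribution has in fact been silently dropped. Your alternative --- rewriting $\sls=\la^{2}\Delta V^{\la}$ and integrating by parts once to extract an explicit factor of $\la$ absorbed by the energy bounds on $\la\nabla V^{\la}$ and $\nabla\ul$ --- is cleaner and makes the vanishing of this term completely transparent, at the harmless price of one extra derivative that $j_{\alpha}$ soaks up.

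Two cosmetic points: with the paper's convention $(a\otimes b)_{ij}=a_{i}b_{j}$ and $(\dive M)_{i}=\sum_{j}\partial_{j}M_{ij}$, the identity should read $\sls\ul=\la^{2}\dive(\ul\otimes\nabla V^{\la})-\la^{2}(\nabla V^{\la}\cdot\nabla)\ul$; and from \eqref{4.1.2} you only have $\ul\in L^{2}_{t}L^{2}_{x}$, not $L^{\infty}_{t}L^{2}_{x}$, so $\la^{2}\,\ul\otimes\nabla V^{\la}$ lies in $L^{2}_{t}L^{1}_{x}$ rather than $L^{\infty}_{t}L^{1}_{x}$ --- which is exactly what you need anyway.
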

\begin{proof}
In order to prove the Proposition \ref{p5.1.1} we split $Q\ul$ as follows
\begin{equation*}
\|Q\ul\|_{L^{2}_{t}L^{p}_{x}}\leq \|Q\ul-Q\ul\ast j_{\alpha}\|_{L^{2}_{t}L^{p}_{x}}+\|Q\ul\ast j_{\alpha}\|_{L^{2}_{t}L^{p}_{x}}=J_{1}+J_{2},
\end{equation*}
where $j_{\alpha}$ is the smoothing kernel defined in Lemma \ref{ly}.
Now we estimate separately $J_{1}$ and $J_{2}$.
For $J_{1}$ by using \eqref{y1} we get
\begin{equation}
\label{5.1.3}
J_{1}\leq \alpha^{1-3\left(\frac{1}{2}-\frac{1}{p}\right)}\|\nabla \ul\|_{L^{2}_{t,x}}.
\end{equation}
To estimate $J_{2}$ we take into account the definition \eqref{3.1.1} and so we split $J_{2}$ as
\begin{equation}
J_{2}\leq \|Q(\sls\ul)\ast j_{\alpha}\|_{L^{2}_{t}L^{p}_{x}}+\|Q(\rl\ul)\ast j_{\alpha}\|_{L^{2}_{t}L^{p}_{x}}=J_{2,1}+J_{2,2}.
\label{5.1.4}
\end{equation}
For $J_{2,1}$ we use  \eqref{4.1.3} and \eqref{y2}, so we have
\begin{equation}
J_{2,1}\leq\alpha^{-1-3\left(\frac{1}{2}-\frac{1}{p}\right)}\|\sls \ul\|_{L^{2}_{t}H^{-1}_{x}}.
\label{5.1.5}
\end{equation}
From the identity $Q(\sls\ul)=\nabla\Delta^{-1}\partial_{t}\sls$ and by the inequality \eqref{y2} we get  $J_{2,2}$ satisfies the following estimate
\begin{align}
J_{2}&= \la^{1/2}\|\la^{-1/2}\nabla\Delta^{-1}\partial_{t}\sls\ast j\|_{L^{2}_{t}L^{p}_{x}}\notag\\
&\leq \la^{1/2}\alpha^{-s_{0}-4-3\left(\frac{1}{4}-\frac{1}{p}\right)}\|\la^{-1/2}\partial_{t}\se\|_{L^{2}_{t}W^{-s_{0}-4,4}_{x}}\notag\\
&\leq \la^{1/2}\alpha^{-s_{0}-4-3\left(\frac{1}{4}-\frac{1}{p}\right)}T^{1/2}\|\la^{-1/2}\partial_{t}\se\|_{L^{4}_{t}W^{-s_{0}-4,4}_{x}}.
\label{5.1.6}
\end{align}
Now, summing up \eqref{5.1.4}, \eqref{5.1.5} and \eqref{5.1.6} we get
\begin{equation}
\|Q\ue\|_{L^{2}_{t}L^{p}_{x}}\lesssim  \alpha^{1-3\left(\frac{1}{2}-\frac{1}{p}\right)}+C_{T}\la^{1/2}\alpha^{-s_{0}-4-3\left(\frac{1}{4}-\frac{1}{p}\right)},
\label{5.1.7}
\end{equation}
Finally,  we choose $\alpha$ in terms of $\la$, for example in a way that the two terms on the right-hand side of the inequality \eqref{5.1.7} are of the same order, namely
\begin{equation}
\alpha=\la^{\frac{2}{17+4s_{0}}}.
\end{equation}
Therefore, we obtain
\begin{equation*}
\displaystyle{\|Q\ul\|_{L^{2}_{t}L^{p}_{x}}\leq C_{T}\la^{ \frac{6-p}{p(17+4s_{0})}}\quad \text{for any $p\in [4,6)$.}}
\end{equation*}
\end{proof}
\subsection{Strong convergence of $\mathbf{P\ul}$}
It remains to prove the strong compactness of the incompressible component of the velocity field.  To achieve this goal we need to recall here, The compactness can be obtained by looking at some time regularity properties of $P\ul$ and by using the Theorem \ref{S}, but before we need to prove  the following lemma.
\begin{lemma}
\label{ll1}
Let us consider the solution $(\rl, \ul, \vl)$ of the Cauchy problem for the system \eqref{3.1}. Assume that the hypotheses (ID) hold. Then for all $h\in(0,1)$, we have
\begin{equation}
\label{5.2.1}
\|P\ul(t+h)-P\ul(t)\|_{L^{2}([0,T]\times \R^{3})}\leq C_{T}h^{2/5}.
\end{equation}
\end{lemma}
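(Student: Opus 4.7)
The strategy is to gain time--equicontinuity of $Pu^\lambda$ in $L^2([0,T]\times\R^3)$ by estimating time translates of $Pu^\lambda$ through a negative Sobolev norm and interpolating with the uniform spatial bound $Pu^\lambda\in L^2_tH^1_x$ coming from \eqref{4.1.2bis}. Since the momentum equation only controls $\partial_t(\rho^\lambda u^\lambda)$, not $\partial_tu^\lambda$, I first decompose $Pu^\lambda=P(\rho^\lambda u^\lambda)-P(\sigma^\lambda u^\lambda)$ and treat the two pieces separately.

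For $P(\rho^\lambda u^\lambda)$, apply the Leray projector $P$ to equation \eqref{4.2.2}. Every gradient on the right--hand side (namely $\nabla\sigma^\lambda$, $(\gamma-1)\nabla\pi^\lambda$, $(\nu+\mu)\nabla\dive u^\lambda$, and the linear potential $\nabla V^\lambda$) is annihilated. The nonlinear electric term is reorganized through the Poisson relation \eqref{4.2.2bis}:
$$\sigma^\lambda\nabla V^\lambda=\lambda^2\Delta V^\lambda\,\nabla V^\lambda=\lambda^2\dive(\nabla V^\lambda\otimes\nabla V^\lambda)-\tfrac12\lambda^2\nabla|\nabla V^\lambda|^2,$$
so that $P(\sigma^\lambda\nabla V^\lambda)=\lambda^2 P\dive(\nabla V^\lambda\otimes\nabla V^\lambda)$, yielding
$$\partial_tP(\rho^\lambda u^\lambda)=P\bigl[\mu\Delta u^\lambda-\dive(\rho^\lambda u^\lambda\otimes u^\lambda)+\lambda^2\dive(\nabla V^\lambda\otimes\nabla V^\lambda)\bigr].$$
I then bound each summand in a negative Sobolev space. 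By \eqref{4.1.2bis}, $\mu P\Delta u^\lambda\in L^2_tH^{-1}_x$. Ladyzhenskaya--Prodi--Serrin interpolation applied to $u^\lambda\in L^\infty_tL^2_x\cap L^2_tL^6_x$ (using the scaling identity $2/p+3/q=3/2$) gives $u^\lambda\in L^4_tL^3_x$, hence $\rho^\lambda u^\lambda\otimes u^\lambda\in L^2_tL^{3/2}_x\hookrightarrow L^2_tH^{-1/2}_x$ and the convective divergence belongs to $L^2_tH^{-3/2}_x$. For the quadratic electric stress, \eqref{4.1.3b} gives $\lambda^2\nabla V^\lambda\otimes\nabla V^\lambda\in L^\infty_tL^1_x$, which embeds into $L^\infty_tH^{-\alpha'}_x$ for any $\alpha'>3/2$. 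Integrating $\partial_tP(\rho^\lambda u^\lambda)$ in time and applying Fubini then produces, uniformly in $\lambda$,
$$\|P(\rho^\lambda u^\lambda)(\cdot+h)-P(\rho^\lambda u^\lambda)(\cdot)\|_{L^2_tH^{-\alpha}_x}\lesssim h\qquad\text{for }\alpha\text{ close to }3/2.$$

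I conclude by interpolating this translate bound against $P(\rho^\lambda u^\lambda)\in L^2_tH^1_x$ via the standard inequality
$$\|g\|_{L^2_x}\le\|g\|_{H^{-\alpha}_x}^{1/(1+\alpha)}\|g\|_{H^1_x}^{\alpha/(1+\alpha)},$$
together with Hölder in time, obtaining $\|P(\rho^\lambda u^\lambda)(\cdot+h)-P(\rho^\lambda u^\lambda)(\cdot)\|_{L^2_{t,x}}\lesssim h^{1/(1+\alpha)}$, which at $\alpha=3/2$ is exactly $h^{2/5}$. The residual term $P(\sigma^\lambda u^\lambda)$ is handled via the Strichartz bound \eqref{4.2.11}, which forces $\sigma^\lambda=O(\lambda^{1/2})$ in $L^4_tW^{-s_0-2,4}_x$; combined with $u^\lambda\in L^2_tH^1_x$ this produces a contribution to the time translate that is dominated by $h^{2/5}$ uniformly in $\lambda$. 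The key technical difficulty is the borderline embedding $L^1(\R^3)\hookrightarrow H^{-3/2}(\R^3)$, which fails at the endpoint: the quadratic electric stress forces one to take $\alpha$ strictly above $3/2$, and an arbitrarily small $\epsilon$--loss in the interpolation exponent must be absorbed into the constant $C_T$. This borderline behaviour reflects precisely the non--dispersive character of the self--interacting electric wave packets that eventually give rise to the correctors $E^\pm$ in the Main Theorem \ref{tm1}.
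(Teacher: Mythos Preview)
Your interpolation strategy has a genuine gap at the step where you invoke
\[
\|g\|_{L^2_x}\le\|g\|_{H^{-\alpha}_x}^{1/(1+\alpha)}\|g\|_{H^1_x}^{\alpha/(1+\alpha)}
\]
with $g=P(\rho^\lambda u^\lambda)(\cdot+h)-P(\rho^\lambda u^\lambda)(\cdot)$. This requires a uniform bound on $P(\rho^\lambda u^\lambda)$ in $L^2_tH^1_x$, but the energy inequality \eqref{2.5} only furnishes $u^\lambda\in L^2_tH^1_x$; the discrepancy $\sigma^\lambda u^\lambda$ is controlled merely in $L^2_tH^{-1}_x$ by \eqref{4.1.3}, and no $H^1$ bound on it is available for weak solutions. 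The two pieces of information you want to interpolate---the $H^1$ bound and the $H^{-\alpha}$ translate bound from the momentum equation---live on \emph{different} functions ($u^\lambda$ versus $\rho^\lambda u^\lambda$), and your decomposition does not reconcile them. Your fallback, handling the residual $P(\sigma^\lambda u^\lambda)$ through the Strichartz estimate \eqref{4.2.11}, yields smallness of order $\lambda^{1/2}$ but carries no dependence on $h$; it therefore cannot produce a contribution dominated by $h^{2/5}$ uniformly in $\lambda$, and the time--equicontinuity claimed in \eqref{5.2.1} is not established.

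The paper resolves exactly this mismatch by replacing direct interpolation with a mollification--duality argument. One writes $\|Pz^\lambda\|_{L^2}^2=\langle Pz^\lambda,\,Pz^\lambda\rangle$ with $z^\lambda=u^\lambda(\cdot+h)-u^\lambda(\cdot)$, splits the \emph{second} factor as $(Pz^\lambda-Pz^\lambda\ast j_\alpha)+Pz^\lambda\ast j_\alpha$, and in the low--frequency pairing rewrites the \emph{first} factor as $P(\rho^\lambda z^\lambda)-P(\sigma^\lambda z^\lambda)$. The crucial point is that the mollified test function $Pz^\lambda\ast j_\alpha$ absorbs all the regularity: its $L^\infty_x$ and $W^{1,\infty}_x$ norms are controlled by $\alpha^{-3/2}\|\nabla u^\lambda\|_{L^2}$ via Lemma~\ref{ly}, so the $L^1_x$ quantities $\rho^\lambda|u^\lambda|^2$ and $|\lambda\nabla V^\lambda|^2$ coming from the momentum equation can be paired against it. The $\sigma^\lambda$--piece is likewise handled by an integration by parts after writing $\sigma^\lambda=\lambda^2\Delta V^\lambda$, which produces an explicit factor of $\lambda$. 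Balancing the high--frequency error $O(\alpha)$ against the low--frequency error $O(h\alpha^{-3/2})$ by taking $\alpha=h^{2/5}$ then gives the exponent. In short, the asymmetric use of regularity---$H^1$ on $u^\lambda$ through the mollifier, time control on $\rho^\lambda u^\lambda$ through the equation---is precisely what your direct Sobolev interpolation cannot accommodate.
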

\begin{proof}
Let us set $z^{\la}=\ul(t+h)-\ul(t)$, we have
\begin{align}
\hspace{-0,25 cm}\|P\ul(t+h)-P\ul(t)\|^{2}_{L^{2}_{t,x}}&=\int_{0}^{T}\!\!\int_{\R^{3}}dtdx(Pz^{\la})\cdot(Pz^{\la}-Pz^{\la}\ast j_{\alpha})\notag\\&+\int_{0}^{T}\!\!\int_{\R^{3}}dtdx(Pz^{\la})\cdot(Pz^{\la}\ast j_{\alpha})=I_{1}+I_{2}.
\label{5.2.2}
\end{align}
By using \eqref{y1} together with \eqref{4.1.2} we can estimate $I_{1}$ in the following way
\begin{equation}
I_{1}\leq \|Pz^{\la}\|_{L^{2}_{t,x}}\|Pz^{\la}(t)-(Pz^{\la}\ast j_{\alpha})(t)\|_{L^{2}}\lesssim \alpha \|\ul\|_{L^{2}_{t,x}}\|\nabla\ul\|_{L^{2}_{t,x}}.
\label{5.2.3}
\end{equation}
In order to estimate $I_{2}$ we split it as follows
\begin{align}
I_{2}&=\int_{0}^{T}\!\!\int_{\R^{3}}dtdxP(\rl z^{\la})\cdot(Pz^{\la}\ast j_{\alpha})+\int_{0}^{T}\!\!\int_{\R^{3}}dtdxP(\sls z^{\la})\cdot(Pz^{\la}\ast j_{\alpha})\notag\\
&=I_{2,1}+I_{2,2}.
\label{5.2.4}
\end{align}
$I_{2,2}$ can be estimated by taking into account \eqref{4.1.2}, \eqref{4.1.3} and $\eqref{3.1}_{3}$
so we have
\begin{align}
I_{2,2}=&\la^{2}\int_{0}^{T}\int_{\R^{3}}dtdx (\Delta\vl z^{\la}(Pz^{\la}\ast j_{\alpha})\notag\\
&=\la^{2}\int_{0}^{T}\int_{\R^{3}}dtdx \left[(\nabla \vl z^{\la})(\nabla Pz^{\la}\ast j_{\alpha})+\nabla \vl\nabla z^{\la}(Pz^{\la}\ast j_{\alpha})\right]\notag\\
&\leq \la\|\la \nabla \vl z^{\la}+ \la \nabla \vl \nabla \ul\|_{L^{2}_{t}L^{1}_{x}}\|\nabla Pz^{\la}\ast j_{\alpha}\|
\notag\\
&\leq \la \alpha^{-3/2}\|\nabla \ue\|_{L^{2}_{t,x}}\|\la \nabla \vl z^{\la}+ \la \nabla \vl \nabla\ul\|_{L^{2}_{t}L^{1}_{x}}.
\label{5.2.5}
\end{align}
Now we estimate $I_{2,1}$. Let us reformulate $P(\rl z^{\la})$ in integral form by using the equation $\eqref{3.1}_{2}$ and the Poisson equation $\eqref{3.1}_{3}$, hence
\begin{align}
I_{2,1}&\leq\left|\int_{0}^{T}dt \int_{\R^{3}}dx \int_{t}^{t+h}ds(\dive(\rl\ul\otimes\ul)+\Delta \ul)(s,x)\cdot (Pz^{\la}\ast j_{\alpha})(t,x)\right|\notag\\
&+\left|\int_{0}^{T}dt \int_{\R^{3}}dx \int_{t}^{t+h}dsP\left(\frac{\sls}{\la}\nabla V^{\la}\right)(s,x)\cdot (Pz^{\la}\ast j_{\alpha})(t,x)\right|\notag\\
&=\left|\int_{0}^{T}dt \int_{\R^{3}}dx \int_{t}^{t+h}ds(\dive(\rl\ul\otimes\ul)+\Delta \ul)\cdot (Pz^{\la}\ast j_{\alpha})(t,x)\right|\notag\\
&+\left|\int_{0}^{T}dt \int_{\R^{3}}dx \int_{t}^{t+h}ds\la^{2}\dive(\nabla V^{\la}\otimes\nabla V^{\la}))(s,x)\cdot (Pz^{\la}\ast j_{\alpha})(t,x)\right|.
\label{5.2.6}
\end{align}
Then, by integrating by parts, by using \eqref{y2} with $s=0$, $p=\infty$, $q=2$, we deduce
\begin{align}
I_{2,1}&\leq  
 h\|\nabla\ul\|^{2}_{L^{2}_{t,x}}
\notag\\
&+C\alpha^{-3/2}T^{1/2}h\|\nabla\ul\|_{L^{2}_{t,x}}\left(\|\rl |\ul |^{2}\|_{L^{\infty}_{t}L^{1}_{x}}+\||\la\nabla V^{\la}|^{2}\|_{L^{\infty}_{t}L^{1}_{x}}\right).
\label{5.2.7}
\end{align}
Summing up $I_{1}$, $I_{2,1}$, $I_{2,2}$ and by taking into account \eqref{2.5} we have
\begin{equation*}
\|P\ul(t+h)-P\ul(t)\|^{2}_{L^{2}([0,T]\times \R^{3})}\leq C(h^{2/5}+h+h\alpha^{-3/2}T^{1/2}+2\alpha^{-3/2}\la),
\label{5.2.8}
\end{equation*}
by choosing $ \lambda< \alpha$ and $\alpha=h^{2/5}$, we end up with \eqref{5.2.1}.
\end{proof}
\begin{corollary}
Let us consider the solution $(\rl, \ul, \vl)$ of the Cauchy problem for the system \eqref{3.1}. Assume that the hypotheses (ID) hold. Then  as $\la\downarrow 0$
\label{c3}
\begin{equation}
P\ul \longrightarrow Pu, \qquad \text{strongly in $L^{2}(0,T;L^{2}_{loc}(\R^{3}))$}.
\label{5.2.9}
\end{equation}
\end{corollary}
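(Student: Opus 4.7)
The plan is to deduce the corollary from the Aubin-Lions type compactness result recalled in Theorem \ref{S}, with $B=L^2_{loc}(\R^3)$ and $\mathcal{F}=\{P\ul\}_{\lambda>0}$, and then to identify the strong limit via the weak convergence already obtained in part (ii) of the Main Theorem \ref{tm1}.

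First I would verify condition (ii) of Theorem \ref{S}. This is essentially already done: Lemma \ref{ll1} yields
\begin{equation*}
\|P\ul(\cdot+h)-P\ul(\cdot)\|_{L^2([0,T-h]\times\R^3)} \leq C_T h^{2/5},
\end{equation*}
which tends to $0$ as $h\to 0$ uniformly in $\lambda$, and $L^2([0,T-h]\times\R^3)$ embeds in $L^2([0,T-h];L^2_{loc}(\R^3))$. So the equicontinuity in time is immediate.

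Next, I would verify condition (i), i.e.\ that for every $0<t_1<t_2<T$ the set $\{\int_{t_1}^{t_2} P\ul(t)\,dt\}_{\lambda>0}$ is relatively compact in $L^2_{loc}(\R^3)$. Using \eqref{4.1.2bis} together with the boundedness of $P$ on $\dot H^1$ (recall \eqref{pr} and the $L^p$-continuity of Riesz multipliers), one has $P\ul$ uniformly bounded in $L^2([0,T];\dot H^1(\R^3))$. By Cauchy-Schwarz,
\begin{equation*}
\Bigl\| \int_{t_1}^{t_2} P\ul(t)\,dt \Bigr\|_{\dot H^1(\R^3)} \leq (t_2-t_1)^{1/2}\,\|P\ul\|_{L^2([0,T];\dot H^1)} \leq C,
\end{equation*}
uniformly in $\lambda$. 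Coupling this with the uniform $L^2_{t,x}$ bound on $\ul$ gives uniform boundedness in $H^1(\R^3)$, hence by the Rellich-Kondrachov theorem relative compactness in $L^2_{loc}(\R^3)$.

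Theorem \ref{S} then gives, up to extraction of a subsequence, that $P\ul$ converges strongly in $L^2([0,T];L^2_{loc}(\R^3))$. To identify the limit, I invoke part (ii) of the Main Theorem \ref{tm1}: $\ul \rightharpoonup u$ weakly in $L^2([0,T];\dot H^1(\R^3))$, so by the continuity of $P$ on this space one has $P\ul \rightharpoonup Pu$ weakly. Uniqueness of the weak limit forces the strong limit to coincide with $Pu$, and a standard subsequence-uniqueness argument upgrades convergence of the full family. No real obstacle arises here: the hard analytic work was done in Lemma \ref{ll1}, and this corollary is essentially a packaging step combining that lemma with Rellich compactness and the Aubin-Lions framework.
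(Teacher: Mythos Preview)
Your argument is correct and follows essentially the same route as the paper: Lemma \ref{ll1} supplies the uniform time-equicontinuity, Theorem \ref{S} (Simon's criterion) yields strong $L^2_t L^2_{loc}$ compactness, and weak convergence identifies the limit. The paper's one-line proof additionally cites Proposition \ref{p5.1.1}, but as your treatment shows this is not actually needed for the corollary itself; your explicit verification of the spatial-compactness condition (i) via the uniform $\dot H^1$ bound \eqref{4.1.2bis} and Rellich--Kondrachov is the natural way to fill in what the paper leaves implicit. One stylistic point: rather than invoking part (ii) of the Main Theorem \ref{tm1} (which logically comes after this corollary), you can cite \eqref{4.1.2} directly for the weak convergence of $\ul$, avoiding any appearance of circularity.
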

\begin{proof}
By using the Lemma \ref{ll1} and the Theorem \ref{S} and the Proposition \ref{p5.1.1} we get \eqref{5.2.8}. 
\end{proof}

\section{Convergence of the electric field}

This section is addressed to the study of the convergence of the electric field $\el=\nabla\vl$. By the a priori estimate \eqref{4.1.3b} we only know that  $\la \el$ is bounded in $L^{\infty}_{t}L^{2}_{x}$ which does not give enough information to pass into the limit in the quadratic term $\rl\nabla\vl=\dive(\la\el\otimes \la\el)-1/2\nabla|\la\el |^{2}$, appearing in the righthand side of $\eqref{3.1}_{2}$. Hence the problem is how to recover the weak continuity of quadratic forms in $L^{2}$. Since $\la\el$ is bounded in $L^{\infty}_{t}L^{2}_{x}$ we can define the so called microlocal defect measure introduced by P. G\`erard in \cite{Ger91} and by L. Tartar in \cite{Tar90} (H-measures), but in order to handle time oscillations we need to introduce correctors. In this section we will be able to prove the following theorem.
\begin{theorem}
\label{t6}
Let be $(\rl,\ul,\el)$ a sequence of solutions of the Navier Stokes Poisson system \eqref{3.1}, then
\begin{itemize}
\item[i)] there exists $E^{+}$, $E^{-}$ in $L^{\infty}((0,T),L^{2}(\R^{3}))$,
\item[ii)] there exists a positive measure $\nu^{E}$ on $\R^{3}\times S^{2}$ depending measurably on $t$
\end{itemize}
such that for all pseudodifferential operators $A\in \psi_{comp}^{0}(\R^{3},\mathcal{K}(\R^{3}))$, and of symbol $a(x,\xi)$ and for all $\phi\in \mathcal{D}(0,t)$ one has
\begin{align}
\lim_{\la\to 0}\int dt\phi(t) \la^{2}(A\el,\el)&=\int dt\phi(t) (AE^{+},E^{+})+\int dt\phi(t) (AE^{-},E^{-})\notag\\
&+\int dt\phi(t) \!\!\int_{\R^{3}\times S^{2}}tr\left(a(x,\xi)\frac{\xi\otimes \xi}{|\xi|^{2}}\right)d\nu^{E}.
\label{6.1a}
\end{align}
\end{theorem}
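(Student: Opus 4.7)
The plan is to combine P.~G\`erard's microlocal defect measure machinery with an explicit separation of the high--frequency temporal oscillations at frequency $1/\la$ built into the Klein--Gordon structure \eqref{4.2.3}. The uniform bound $\la\el\in L^\infty_t L^2_x$ from \eqref{4.1.3b}, together with the identity $\la^2\Delta\el=\nabla\sls$ derived from the Poisson equation, shows that each component of $\la\el$ inherits a Klein--Gordon equation of mass $1/\la$ driven by a source controlled uniformly in $\la$. Introducing the Klein--Gordon frequency operator $\omega_\la:=\la^{-1}(1-\la^2\Delta)^{1/2}$, I diagonalize by splitting
\[
\la\el = F^+_\la + F^-_\la,\qquad F^\pm_\la := \tfrac{1}{2}\bigl(\la\el \mp i\,\omega_\la^{-1}\partial_t(\la\el)\bigr),
\]
so that $F^\pm_\la$ carries the $e^{\pm it\omega_\la}$ time oscillation, and I define the correctors $E^\pm_\la := e^{\pm it\omega_\la}F^\mp_\la$, which remove the oscillation. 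The first--order system satisfied by $F^\pm_\la$, combined with the a priori estimates of Proposition \ref{p4.1}, gives both $E^\pm_\la$ bounded in $L^\infty_t L^2_x$ uniformly in $\la$; extracting a subsequence yields weak-$*$ limits $E^\pm\in L^\infty((0,T);L^2(\R^3))$.

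Inserting the reconstruction $\la\el = e^{-it\omega_\la}E^+_\la + e^{it\omega_\la}E^-_\la$ into $(A(\la\el),\la\el)$ and using that the spatial operator $A$ conjugates by the unitary group $e^{it\omega_\la}$ into itself modulo semiclassical commutators (Egorov type, with displacement vanishing as $\la\to 0$), one obtains diagonal contributions $(AE^\pm_\la,E^\pm_\la)$ plus cross terms in which the phase $e^{\pm 2it\omega_\la}$ survives. After pairing with $\phi\in\mathcal{D}(0,T)$, the cross terms vanish in the limit by non-stationary phase: writing $e^{\pm 2it\omega_\la}=(\pm 2i\omega_\la)^{-1}\partial_t e^{\pm 2it\omega_\la}$ and integrating by parts in $t$, the boundary terms disappear by compact support of $\phi$ and the interior term picks up the operator $\omega_\la^{-1}$, which acts on $L^2$ with norm $\lesssim\la$ since its Fourier symbol is $\la(1+\la^2|\xi|^2)^{-1/2}$. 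The neighborhood of zero spatial frequency, where this estimate would degenerate, is handled by a Littlewood--Paley cutoff whose contribution tends to zero by the compactness of $A$ through the Generalized Rellich Theorem \ref{p2.3}.

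What remains is to pass to the limit in $\int\phi(t)(AE^\pm_\la,E^\pm_\la)\,dt$. Writing $E^\pm_\la = E^\pm + (E^\pm_\la-E^\pm)$ and expanding, the mixed terms $(AE^\pm,E^\pm_\la-E^\pm)$ and $(A(E^\pm_\la-E^\pm),E^\pm)$ tend to zero by the weak-$*$ convergence $E^\pm_\la-E^\pm\rightharpoonup 0$ (combined with dominated convergence in $t$), so what is left is $\int\phi(AE^\pm,E^\pm)\,dt$ plus the pure quadratic residual $\int\phi\bigl(A(E^\pm_\la-E^\pm),E^\pm_\la-E^\pm\bigr)\,dt$. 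P.~G\`erard's theorem, applied to the weakly null sequence $E^\pm_\la-E^\pm$, produces (after a further extraction) positive matrix-valued microlocal defect measures $\mu^\pm$ on $\R^3\times S^2$, measurable in $t$, such that the residual converges to $\int\phi(t)\int\mathrm{tr}(a(x,\xi)\,\mu^\pm(dxd\xi;t))\,dt$. The polarization $\mu^\pm = (\xi\otimes\xi/|\xi|^2)\,d\nu^\pm$ for scalar measures $\nu^\pm\ge 0$ follows from the gradient constraint $QF^\pm_\la = F^\pm_\la$, preserved modulo $O(\la)$ commutators under the modulation by $e^{\pm it\omega_\la}$, and from the fact that the principal symbol of $Q$ on $S^2$ is $\xi\otimes\xi/|\xi|^2$. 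Setting $\nu^E := \nu^+ + \nu^-$ assembles \eqref{6.1a}.

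The principal obstacle I foresee is the rigorous control of the polarization step and of the commutators $[A,e^{\pm it\omega_\la}]$ within pseudodifferential calculus: the gradient identity $QF^\pm_\la=F^\pm_\la$ must be inserted inside the defect measure computation while tracking $O(\la)$ remainders, and the semiclassical multiplier $e^{\pm it\omega_\la}$ requires a symbolic analysis going slightly beyond the classical calculus of Proposition \ref{p2.2}. Once these technical points are handled, the combinatorics of the expansion assemble into the stated identity \eqref{6.1a}.
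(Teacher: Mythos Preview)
Your outline is in the right spirit but follows a different technical route from the paper, and that route carries precisely the obstacle you flag at the end.

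The paper does \emph{not} diagonalize via the full dispersion $\omega_\la=\la^{-1}(1-\la^2\Delta)^{1/2}$. Instead it works with equation~\eqref{E}, namely $\la^2\partial_{tt}\el+\el=F^\la$, which has no spatial derivative on the left-hand side and is therefore a harmonic oscillator in $t$ for each fixed $x$. The Duhamel representation \eqref{6.2} then involves only the \emph{scalar} phase $e^{\pm it/\la}$; the correctors $E^\pm$ are simply the weak $L^2_{t,x}$ limits of $\la e^{\mp it/\la}\el$, and because this phase is a scalar it commutes with every $A\in\psi_{comp}^{0}$, so no Egorov or commutator issue ever arises. The substance of the paper's argument is the $L^2$-orthogonality (Proposition~\ref{p6.2}) of $e^{it/\la}E^+$, $e^{-it/\la}E^-$, and the single residual $\la\widetilde{\el}=\la\el-e^{it/\la}E^+-e^{-it/\la}E^-$, established by an elementary time-Fourier computation showing $\int_{|\xi|\le R}\int|\la\mathcal{F}\widetilde{\el}|^2\,dt\,d\xi\to 0$ for each fixed $R$. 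G\'erard's construction is then applied \emph{once}, to $\la\widetilde{\el}$, and the polarization $\xi\otimes\xi/|\xi|^2$ comes directly from the fact that $\la\widetilde{\el}$ is a gradient.

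Your operator phase $e^{\pm it\omega_\la}$ forces control of $[A,e^{\pm it\omega_\la}]$, and this is a genuine gap rather than a formality: the Egorov displacement is $t\,\partial_\xi\omega_\la(\xi)=t\la\xi(1+\la^2|\xi|^2)^{-1/2}$, which is $O(\la|\xi|)$ on bounded $\xi$-sets but $O(1)$ at frequencies $|\xi|\sim\la^{-1}$, so $e^{it\omega_\la}Ae^{-it\omega_\la}-A$ does not vanish uniformly and your diagonal terms do not reduce to $(AE^\pm_\la,E^\pm_\la)$ without an additional high-frequency cutoff that you have not supplied. Splitting the defect into two measures $\nu^\pm$ also adds bookkeeping the paper avoids. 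A natural repair is to write $e^{-it\omega_\la}=e^{-it/\la}\,e^{-it(\omega_\la-\la^{-1})}$ and note that the second factor converges strongly to the identity on $L^2$; but once you do this you have essentially reduced to the paper's scalar phase, and the $\omega_\la$ apparatus becomes superfluous.
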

First we rewrite \eqref{4.2.3} in terms of $\el$, namely
\begin{align}
\la^{2}\partial_{tt}\el+\el&=\dive\Delta^{-1}\nabla\dive\left(\rl\ul\otimes\ul +(\rl)^{\gamma}\mathbb{I}-\la^{2}\el\otimes\el\right)\notag\\
&+\frac{\la^{2}}{2}\dive\left(|\el |^{2}\mathbb{I}\right)-2\nabla\dive \ul=F^{\la},
\label{E}
\end{align}
then we observe that by using  \eqref{4.2.11} and the uniqueness of the weak limit we have
\begin{equation}
\label{6.1}
\la\nabla\vl \rightharpoonup 0 \quad \text{weakly in $L^{2}(0,T;L^{2}(\R^{3}))$.}
\end{equation}
By \eqref{6.1} we see that we are exactly in the framework described by P. G\`erard, but we have to pay attention to one fact. In our case in the quadratic form $\la^{2}\langle A\el,\el \rangle$, $A$ is a pseudodifferential operator homogenous only with respect to the $x$ variable and in the general case we cannot extend it to a pseudodifferential operator homogenous in $(x,t)$. Hence we have to work on $\la \el$ in order to isolate the components that oscillates fast in time, for that reason we introduce what we call the correctors of the electric field.
By using \eqref{E} and Duhamel's formula we can write the electric field $\el$ as
\begin{align}
\el(t,x)&=\int_{0}^{t}\frac{F^{\la}(s,x)}{2i\la}\left(e^{i\frac{t-s}{\la}}-e^{-i\frac{t-s}{\la}}\right)ds\notag\\
&+\frac{\mathcal{E}^{\la}_{1}(x)}{\la}e^{it/\la}+\frac{\mathcal{E}^{\la}_{2}(x)}{\la}e^{-it/\la},
\label{6.2}
\end{align}
where $\mathcal{E}^{\la}_{1}$ and $\mathcal{E}^{\la}_{2}$ are two functions in $L^{2}_{x}$ defined by the initial data of $\el$.
In order to understand how to isolate the oscillating terms let us consider the equation \eqref{E} in the case when $F^{\la}$ does not depend on $x$ and $\mathcal{E}^{\la}_{1}=\mathcal{E}^{\la}_{2}=0$. Then, if we take the Fourier transform with respect to time we have ($\hat{E}$ denotes the Fourier transform with respect to time)
$$\la\hat{\el}=\frac{\la}{1-\la^{2}|\tau|^{2}}\hat{F^{\la}},$$ 
we can see that all the $L^{2}$-mass of $\la\el$ is concentrated in $\tau=\pm 1/\la$ as $\la\to 0$. This simple facts leads us  to introduce correctors in time of  order $1/\la$. So we define
\begin{equation}
\label{6.3}
\el_{+}=\la e^{-it/\la}\el \qquad \el_{-}=\la e^{it/\la}\el
\end{equation}
In particular they take into account of the $L^{2}$-mass of $\la\el$ around $1/\la$.
By construction it easily follows that $\el_{+}$ and $\el_{-}$ are bounded in $L^{2}_{t,x}$ and converge weakly to $E^{+}$ and $E^{-}$ respectively. Moreover we have
\begin{lemma}
\label{l6.2}
Let be $(\rl,\ul,\el)$  a sequence of solutions of the Navier Stokes Poisson system \eqref{3.1} which satisfy (ID), then one has
$$\mathcal{E}^{\la}_{1}(x)+\int_{0}^{T}ds\frac{F^{\la}(s,x)}{2i}e^{-is/\la}\rightharpoonup E^{+}\quad \text{in $\mathcal{D}((0,T)\times \R^{3})$}.$$
The same holds for $E^{-}$.
\end{lemma}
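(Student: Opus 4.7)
The approach is to read off the identity directly from the Duhamel formula \eqref{6.2}. Multiplying \eqref{6.2} by $\la e^{-it/\la}$ and rearranging the phases (using $e^{-i(2t-s)/\la}=e^{-2it/\la}e^{is/\la}$) gives the pointwise decomposition
\begin{equation*}
\el_+(t,x) \;=\; \Bigl[\mathcal{E}^{\la}_1(x) + \int_0^t \frac{F^\la(s,x)}{2i}\, e^{-is/\la}\, ds\Bigr] \;+\; e^{-2it/\la}\bigl[\mathcal{E}^{\la}_2(x) - h^\la(t,x)\bigr],
\end{equation*}
where $h^\la(t,x) := \int_0^t \tfrac{F^\la(s,x)}{2i}\,e^{is/\la}\,ds$. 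By \eqref{4.1.3b}, $\el_+ := \la e^{-it/\la}\el$ is bounded in $L^\infty_t L^2_x$, so along a subsequence $\el_+\rightharpoonup E^+$. It then suffices to show that the second bracket, which carries the rapidly oscillating factor $e^{-2it/\la}$, tends to $0$ in $\mathcal{D}'((0,T)\times \R^3)$; by uniqueness of the weak limit this identifies the first bracket with $E^+$. The statement for $E^-$ follows from the same computation with the roles of $e^{\pm it/\la}$ and of $\mathcal{E}^{\la}_1,\mathcal{E}^{\la}_2$ swapped.

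To show the tail vanishes, I use a nonstationary phase integration by parts. For every $\phi\in\mathcal{D}((0,T)\times\R^3)$, the identity $e^{-2it/\la} = \tfrac{i\la}{2}\,\partial_t\!\bigl(e^{-2it/\la}\bigr)$, combined with one integration by parts in $t$ (boundary terms vanish by the compact support of $\phi$) and the derivative identity $\partial_t h^\la = \tfrac{F^\la}{2i}\,e^{it/\la}$, yields
\begin{equation*}
\int\!\!\int \phi\, e^{-2it/\la}\bigl(\mathcal{E}^{\la}_2 - h^\la\bigr)\,dt\,dx
\;=\; -\frac{i\la}{2}\int\!\!\int (\partial_t\phi)(\mathcal{E}^{\la}_2 - h^\la)\, e^{-2it/\la}\,dt\,dx \;+\; \frac{\la}{4}\int\!\!\int \phi\, F^\la\, e^{-it/\la}\,dt\,dx.
\end{equation*}
The explicit prefactor $\la$ will produce the desired smallness provided the two remaining integrals are uniformly bounded in $\la$.

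The main obstacle is precisely this uniform bound, i.e.\ a regularity inventory for $\mathcal{E}^{\la}_2$ and $F^\la$. By (ID) and Remark~\ref{r2}, $\mathcal{E}^{\la}_2$ is bounded in $H^{-1}_x$ uniformly in $\la$. From Proposition~\ref{p4.1} combined with the Poisson equation \eqref{4.2.2bis}, each quadratic term entering $F^\la$ in \eqref{E}, namely $\rl\ul\otimes\ul$, $(\rl)^\gamma$, $\la^2\el\otimes\el$ and $|\la\el|^2$, lies in $L^\infty_t L^1_x \hookrightarrow L^\infty_t H^{-s_0}_x$ for any $s_0 \geq 3/2$, while $\nabla\dive\ul$ is in $L^2_t H^{-1}_x$. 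Hence $F^\la$ is bounded in $L^\infty_t H^{-s_0-2}_x + L^2_t H^{-2}_x$ and $h^\la$ in $L^\infty_t H^{-s_0-2}_x$. Since $\phi$ and $\partial_t\phi$ are smooth and compactly supported in $(0,T)\times\R^3$, both integrals on the right-hand side above are $O(1)$ in $\la$, so the whole tail is $O(\la)$. This gives the distributional convergence of the tail to zero and completes the identification of $E^+$.
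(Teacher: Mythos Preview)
Your argument is correct and is precisely the elaboration that the paper's one--line proof (``follows by using \eqref{6.2} and Proposition \ref{p4.1}'') invites: you expand the Duhamel representation \eqref{6.2}, separate the resonant and anti--resonant pieces after multiplication by $\la e^{-it/\la}$, and kill the $e^{-2it/\la}$ tail by one integration by parts in $t$, using the uniform negative--Sobolev bounds on $\mathcal{E}^{\la}_2$, $h^{\la}$ and $F^{\la}$ coming from Proposition~\ref{p4.1} and Remark~\ref{r2}. This is the same route as the paper, just made explicit; the only cosmetic point is that the integral in the lemma should be read as $\int_0^t$ (consistently with your decomposition and with \eqref{7.3.2}), and your bound $F^{\la}\in L^\infty_t H^{-s_0-2}_x+L^2_t H^{-2}_x$ is slightly generous in the exponents but amply sufficient for the $O(\la)$ conclusion.
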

\begin{proof}
The proof follows by using \eqref{6.2} and Proposition \ref{p4.1}.
\end{proof}
So, if we look at the limit of $\la\el-e^{it/\la}E^{+}-e^{-it/\la}E^{-}$ as $\la\to 0$, we expect to take away the $L^{2}$-mass of $\la\el$ which concentrates around $1/\la$. Now we can define
\begin{equation}
\widetilde{\el}=\el-e^{it/\la}\frac{E^{+}}{\la}-e^{-it/\la}\frac{E^{-}}{\la},
\label{6.3a}
\end{equation}
then we can prove the following lemma.
\begin{lemma}
Let be $(\rl,\ul,\el)$ be a sequence of solutions of the Navier Stokes Poisson system \eqref{3.1} which satisfy (ID), then it holds
$$\la \widetilde{\el}\rightharpoonup 0 \qquad \text{weakly in $L^{2}(0,T, L^{2}(\R^{3}))$}.$$
\end{lemma}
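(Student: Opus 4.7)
The plan is to reduce the claim to a combination of the weak convergence $\la\el\rightharpoonup 0$ already recorded in \eqref{6.1} and a Riemann--Lebesgue cancellation for the two oscillating corrector terms.

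First I would record a uniform $L^2_{t,x}$ bound. From the identity
$$\la\widetilde{\el}=\la\el-e^{it/\la}E^{+}-e^{-it/\la}E^{-},$$
the a priori estimate \eqref{4.1.3b} gives $\la\el$ bounded in $L^{\infty}(0,T;L^{2}(\R^{3}))$; the correctors $E^{\pm}$ belong to $L^{\infty}(0,T;L^{2}(\R^{3}))$ by construction as weak limits of $\el_{\pm}$; and $|e^{\pm it/\la}|=1$. So $\la\widetilde{\el}$ is uniformly bounded in $L^{\infty}_{t}L^{2}_{x}$, and in particular in $L^{2}(0,T;L^{2}(\R^{3}))$. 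By density, it then suffices to check the vanishing of the pairing against any $\varphi\in\mathcal{D}((0,T)\times\R^{3})$.

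For such a $\varphi$ I would split
$$\int_{0}^{T}\!\!\int_{\R^{3}}\la\widetilde{\el}\cdot\varphi\,dxdt=\int\la\el\cdot\varphi\,dxdt-\int e^{it/\la}E^{+}\cdot\varphi\,dxdt-\int e^{-it/\la}E^{-}\cdot\varphi\,dxdt.$$
The first integral tends to $0$ by the weak convergence \eqref{6.1}. For the other two, introduce the scalar function $\psi^{\pm}(t):=\int_{\R^{3}}E^{\pm}(t,x)\cdot\varphi(t,x)\,dx$; since $E^{\pm},\varphi\in L^{2}((0,T)\times\R^{3})$, Cauchy--Schwarz places $\psi^{\pm}\in L^{1}(0,T)$, and the classical Riemann--Lebesgue lemma applied to the frequency $1/\la\to\infty$ gives
$$\int_{0}^{T}e^{\pm it/\la}\psi^{\pm}(t)\,dt\longrightarrow 0.$$
Combining the three vanishings with the uniform bound yields the conclusion.

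There is no genuine obstacle here: the substance of the statement is already packaged into \eqref{6.1} (established via the Strichartz estimate \eqref{4.2.11}) and into the definition of $E^{\pm}$ as weak limits of $\la e^{\mp it/\la}\el$. The lemma merely formalizes the intuition that, once one subtracts from $\la\el$ the two ballistic components concentrated on the characteristic frequencies $\pm 1/\la$, what remains disperses weakly to zero; it is the natural preparatory step before examining the quadratic form $\la^{2}\langle A\el,\el\rangle$ in the sequel.
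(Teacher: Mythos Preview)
Your proof is correct and follows the same approach as the paper, which merely records that the result follows from \eqref{6.1} together with the $L^{2}_{t,x}$ boundedness of $\la\widetilde{\el}$; you have simply unpacked the implicit Riemann--Lebesgue step for the two oscillating corrector terms $e^{\pm it/\la}E^{\pm}$.
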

\begin{proof}
The proof follows by taking into account \eqref{6.1} and  that $\la\widetilde{\el}$ is bounded in $L^{2}_{t,x}$.
\end{proof}
At this point we can hope that the weak convergence of $\la\widetilde{\el}$ is caused only by spatial oscillations, which allow us to introduce the microlocal defect measure in space. In order to do this, since the solutions are defined only in $(0,T)$, we need to extend $\el$ and $F^{\la}$ to $0$ out of this interval and to cut-off the frequencies greater than a certain quantity. This will be done in the next proposition.
\begin{proposition}
With the same assumption as in Lemma \ref{l6.2} we have
\begin{equation}
\int_{|\xi|\leq R}dx\int_{\R}dt |\la \mathcal{F}\widetilde{\el}(t,x)|\rightarrow 0,
\label{6.3b}
\end{equation}
for any $R$ independent on $\la$.
\end{proposition}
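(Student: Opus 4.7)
The plan is to exploit the Duhamel representation \eqref{6.2} of $\el$. Multiplying by $\la$ and regrouping by the phase factors $e^{\pm it/\la}$, I can write
\begin{equation*}
\la\el(t,x) = e^{it/\la}\Psi^\la_+(t,x) + e^{-it/\la}\Psi^\la_-(t,x),
\end{equation*}
with $\Psi^\la_+(t,x)=\mathcal{E}^\la_1(x)+\int_0^t \frac{F^\la(s,x)}{2i}e^{-is/\la}\,ds$ and an analogous formula for $\Psi^\la_-$. By Lemma \ref{l6.2} the endpoint value $\Psi^\la_\pm(T,\cdot)$ converges weakly to $E^\pm$, so after subtracting $e^{it/\la}E^+ + e^{-it/\la}E^-$ I obtain
\begin{equation*}
\la\widetilde{\el}(t,x) = e^{it/\la}G^\la_+(t,x)+e^{-it/\la}G^\la_-(t,x), \qquad G^\la_+(t,x) = A^\la_+(x) - \int_t^T\frac{F^\la(s,x)}{2i}e^{-is/\la}\,ds,
\end{equation*}
where $A^\la_+:=\Psi^\la_+(T,\cdot)-E^+ \rightharpoonup 0$ in $L^2_x$, and symmetrically for $G^\la_-$.

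Next I would extend $\widetilde{\el}$ by zero outside $[0,T]$ and take the time Fourier transform. Multiplication by $e^{\pm it/\la}$ shifts the Fourier variable by $\mp 1/\la$, so
\begin{equation*}
\la\,\mathcal{F}\widetilde{\el}(\tau,x)=\widehat{G^\la_+}(\tau-1/\la,x)+\widehat{G^\la_-}(\tau+1/\la,x).
\end{equation*}
For $|\tau|\leq R$ with $\la\downarrow 0$ the arguments $\tau\mp 1/\la$ have magnitude comparable to $1/\la$, and proving \eqref{6.3b} reduces to showing that $\widehat{G^\la_\pm}$ is small at these high frequencies in the integrated sense of the statement. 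The natural splitting is $G^\la_+ = A^\la_+(x)\mathbf{1}_{(0,T)}(t) + B^\la_+(t,x)$ with $B^\la_+(t,x)=-\int_t^T\frac{F^\la(s,x)}{2i}e^{-is/\la}\,ds$, and I would handle each piece separately.

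The first contribution is $A^\la_+(x)\,\widehat{\mathbf{1}_{(0,T)}}(\eta)$, and the classical bound $|\widehat{\mathbf{1}_{(0,T)}}(\eta)|\lesssim \min(T,1/|\eta|)$ evaluated at $|\eta|\sim 1/\la$ gives an $O(\la)$ factor which, combined with the weak convergence $A^\la_+\rightharpoonup 0$ in $L^2_x$, delivers the required smallness. For the second piece, swapping the order of integration produces the closed form
\begin{equation*}
\widehat{B^\la_+}(\eta,x) = -\int_0^T\frac{F^\la(s,x)}{2i}\,\frac{e^{-is/\la}-e^{-is(\eta+1/\la)}}{i\eta}\,ds,
\end{equation*}
so evaluating at $\eta=\tau-1/\la$ extracts a factor $1/\eta=O(\la)$ and sets $\eta+1/\la=\tau\in[-R,R]$, leaving an oscillatory integral that is uniformly bounded via the a priori estimates on $F^\la$ used in the proof of \eqref{4.2.11}. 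The hard part, I expect, will be to reconcile the $L^1$-type norm in the statement with the fact that $F^\la$ is only controlled in $L^\infty_t H^{-s_0}_x$ for $s_0\geq 3/2$, since the Riesz-type operator $\dive\Delta^{-1}\nabla\dive$ appearing in \eqref{E} is not bounded on $L^1$; this forces the estimate to be interpreted through a duality with the class of compactly supported pseudodifferential symbols that the microlocal defect measure $\nu^E$ will ultimately be tested against.
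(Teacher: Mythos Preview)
Your Duhamel-based splitting $\la\widetilde{\el}=e^{it/\la}G^\la_+ + e^{-it/\la}G^\la_-$ and the time-Fourier shift are the same mechanism as the paper's $H^\la_\pm$ functions, but you have misread what the cutoff $|\xi|\leq R$ restricts. In \eqref{6.3b} the transform $\mathcal{F}$ is the \emph{spatial} Fourier transform and $\xi\in\R^3$ is the spatial frequency; the claim (with the evident typos corrected, cf.\ the last display of the paper's proof and the hypothesis of Proposition~\ref{pg}) is
\[
\int_{|\xi|\leq R} d\xi \int_{\R} dt\, \bigl|\la\, \mathcal{F}_x\widetilde{\el}(t,\xi)\bigr|^2 \longrightarrow 0,
\]
i.e.\ vanishing of the full $L^2_t$-mass at \emph{bounded spatial} frequencies. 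You instead treat $R$ as a bound on a time frequency $\tau$ and integrate over all $x\in\R^3$, which is exactly why you run into the regularity obstruction you flag in your final paragraph.

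That obstruction is not handled by any pseudodifferential duality; it is dissolved by the spatial cutoff itself, and this is the point your argument misses. The paper first applies $\mathcal{T}_R=\mathcal{F}^{-1}\chi_{\{|\xi|\leq R\}}\mathcal{F}$ to the Duhamel formula; on $\{|\xi|\leq R\}$ the $L^\infty_t H^{-s_0}_x$ bounds on $F^\la$ upgrade to genuine $L^\infty_t L^2_x$ bounds, so after zero-extension in time $\mathcal{T}_R F^\la$ is bounded in $L^2_{t,x}$. The paper's running integral $H^\la_+(t,x)=\int_0^t \mathcal{T}_R F^\la(s,x)e^{-is/\la}\,ds+2i\mathcal{T}_R\mathcal{E}^\la_1(x)$ (a function of $(t,x)$, not of $x$ alone) then satisfies $\mathcal{F}_{t,x}H^\la_+(\tau,\xi)=\tau^{-1}\mathcal{F}_{t,x}\mathcal{T}_R F^\la(\tau+1/\la,\xi)$ for large $\tau$, yielding the uniform tail estimate \eqref{6.7}; this localises the time-Fourier mass of $\la\mathcal{T}_R\el$ near $\tau=\pm1/\la$, and the correctors $E^\pm$ are built precisely to remove that mass. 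Your closed formula for $\widehat{B^\la_+}$ is essentially the same identity, but without the spatial cutoff the step ``uniformly bounded via the a priori estimates on $F^\la$'' fails, since integrating $|F^\la|$-type quantities over all of $\R^3_x$ is not controlled in $L^2$. Likewise, Lemma~\ref{l6.2} only gives $\Psi^\la_+(T,\cdot)\to E^+$ in $\mathcal{D}'$; your premise $A^\la_+\rightharpoonup 0$ in $L^2_x$ is not available, because $\int_0^T F^\la(s,\cdot)e^{-is/\la}ds$ is a priori only bounded in $H^{-s_0}_x$. Restricting to $|\xi|\leq R$ first is what makes both steps legitimate.
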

\begin{proof}
Let be $\chi_{R}$ the characteristic function on $B(0,R)$ and let be $\mathcal{T}_{R}=\mathcal{F}^{-1}\chi_{R}\mathcal{F}$ the operator that cuts the frequencies greater than $R$, clearly $\mathcal{T}_{R}$ is a bounded operator from $L^{2}$ to $H^{s}$, for any $s\geq 0$ and $\nabla\mathcal{T}_{R}$=$\mathcal{T}_{R}\nabla$.
If we apply $\mathcal{T}_{R}$ to \eqref{6.2} we have
\begin{align}
\mathcal{T}_{R}\el(t,x)&=\int_{0}^{t}\frac{\mathcal{T}_{R}F^{\la}(s,x)}{2i\la}\sin\left(\frac{t-s}{\la}\right)ds\notag\\
&+\frac{\mathcal{T}_{R}\el_{1}(x)}{\la}e^{it/\la}+\frac{\mathcal{T}_{R}\el_{2}(x)}{\la}e^{-it/\la}
\label{6.4}
\end{align}
and by the estimates of Proposition \ref{p4.1} we have that $\mathcal{T}_{R}F^{\la}$, $\mathcal{T}_{R}\el_{1}$, $\mathcal{T}_{R}\el_{2}$ are bounded in $L^{\infty}_{t}L^{2}_{x}$.
Since the solutions that we consider are defined in the time interval $0\leq t\leq T$, in order to use the Fourier transform in time we need to extend them to 0, so we get that $\mathcal{T}_{R}F^{\la}$ is bounded in $L^{2}_{t,x}$.
Let us introduce 
\begin{equation}
\label{6.5}
H^{\la}_{+}=\int_{0}^{t}\mathcal{T}_{R}F^{\la}(s,x)e^{-is/\la}ds+2i\mathcal{T}_{R}\el_{1}(x).
\end{equation}
If we compute the space and time Fourier transform of \eqref{6.5}, for $\tau$ large enough we get
\begin{equation}
\label{6.6}
\mathcal{F}_{t,x}H^{\la}_{+}(\tau,\xi)=\frac{1}{\tau}\mathcal{F}_{t,x}\mathcal{T}_{R}F^{\la}\left(\tau+\frac{1}{\la}, \xi\right),
\end{equation}
so $\mathcal{F}_{t,x}H^{\la}_{+}$ is in $L^{2}$ in a neighborhood of $|\tau|=\infty$, moreover we have that
\begin{equation}
\int_{\R^{3}}\int_{|\tau|\geq A}|\mathcal{F}_{t,x}H^{\la}_{+}(\tau,\xi)|^{2}d\tau\leq \frac{1}{A^{2}}\int_{\R^{3}}\int_{\R}|\mathcal{T}_{R}F^{\la}(t,x)|^{2}dxdt.
\label{6.7}
\end{equation}
As a consequence we have that all the mass of $\la\mathcal{F}_{t,x}\mathcal{T}_{R}\el$ is concentrated in $1/\la$. In fact 
\begin{equation}
\la\mathcal{F}_{t,x}\mathcal{T}_{R}\el=\frac{1}{2i}\left[\mathcal{F}_{t,x}H^{\la}_{+}\left(\tau+\frac{1}{\la},\xi\right)-\mathcal{F}_{t,x}H^{\la}_{-}\left(\tau-\frac{1}{\la},\xi\right)\right],
\end{equation}
we have that for any $\eta>0$ there exists $A$ such that
$$\int_{\R^{3}_{\xi}}\int_{\left|\tau \pm\frac{1}{\la}\right|\geq A}|\la\mathcal{F}_{t,x}\mathcal{T}_{R}\el|^{2}d\xi d\tau\leq \eta.$$
In order to proof  \eqref{6.3b} we  take into account the decomposition of $\widetilde{\el}$ in \eqref{6.3a} and the following properties.
For any $\eta>0$ there exists $A$ such that for any $\la<1$ one has
\begin{equation}
\int_{\R^{3}_{\xi}}\int_{\left|\tau \pm\frac{1}{\la}\right|\geq A}|\la\mathcal{F}_{t,x}e^{\pm it/\la}E^{\pm}|^{2}d\xi d\tau\leq \eta.
\label{6.8}
\end{equation}
Moreover 
\begin{align}
&\int_{\R^{3}_{\xi}}\int_{\left|\tau -\frac{1}{\la}\right|\geq A}|\mathcal{F}_{t,x}(\la \mathcal{T}_{R}\widetilde{\el}-e^{it/\la}E^{+})|d\tau d\xi\notag\\
&= \int_{\R^{3}_{\xi}}\int_{\tau|\geq A}|\mathcal{F}_{t,x}((\la \mathcal{T}_{R}\widetilde{\el}e^{-it/\la}-\mathcal{T}_{R} E^{+})|^{2}d\xi d\tau.
\label{6.9}
\end{align}
and so for $E^{-}$.
On the other hand we know that
\begin{equation}
\la e^{-it/\la}\mathcal{T}_{R}\widetilde{\el}=\la e^{-it/\la}\mathcal{T}_{R}\el - \mathcal{T}_{R}E^{+} -e^{-2it/\la}\mathcal{T}_{R}E^{-},
\label{6.10}
\end{equation}
the same holds for $E^{-}$.
By taking into account \eqref{6.8}, \eqref{6.9}, \eqref{6.10} and Parseval's identity we conclude in the following way.
\begin{align}
\int_{|\xi|\leq R} d\xi \int dt |\la \mathcal{F} \widetilde{\el}|^{2}&=\int_{|\xi|\leq R} d\xi \int d\tau |\la \mathcal{F}_{t,x} \mathcal{F}^{-1}\chi_{R}\mathcal{F} \widetilde{\el}|^{2}\notag\\
&=\int_{|\xi|\leq R} d\xi \int d\tau |\mathcal{F}_{t,x} \mathcal{T}_{R} \widetilde{\el}|^{2} \rightarrow 0.
\end{align}
\end{proof}
Now  we are ready to prove the existence of a microlocal defect measure for the electric field $\el$. We start by proving the $L^{2}$-orthogonality of $E^{+}$, $E^{-}$ and $ \widetilde{\el}$.

\begin{proposition}
\label{p6.2}
For any $A\in \psi_{comp}^{0}(\R^{3},\mathcal{K}(\R^{3}))$ and for any $\phi\in \mathcal{D}(0,T)$ it holds
\begin{align}
\lim_{\la\to 0} \la^{2}\int  dt \phi(t)(A \el, \el)&= \lim_{\la\to 0} \la^{2}\int  dt \phi(t)(A \widetilde{\el}, \widetilde{\el})\notag\\
&+ \int  dt \phi(t)(AE^{+}, E^{+}) +\int  dt \phi(t)(AE^{-}, E^{-}).
\label{6.11}
\end{align}
\end{proposition}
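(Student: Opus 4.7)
The approach is to substitute the decomposition $\la\el = e^{it/\la}E^{+} + e^{-it/\la}E^{-} + \la\widetilde{\el}$ from \eqref{6.3a} into the sesquilinear form $\la^{2}(A\el,\el) = (A(\la\el),\la\el)$, expand the nine resulting terms, and show that the three diagonal terms reproduce the right-hand side of \eqref{6.11} while the six cross terms vanish after multiplication by $\phi(t)$ and integration in $t$.

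For the diagonal terms: since $A$ acts only in the space variable and $e^{\pm it/\la}$ is a scalar function of $t$, we have $A(e^{\pm it/\la}u) = e^{\pm it/\la}Au$, and in the complex $L^{2}_{x}$ inner product the phases cancel. This gives $(Ae^{\pm it/\la}E^{\pm},e^{\pm it/\la}E^{\pm}) = (AE^{\pm},E^{\pm})$ and $(A\la\widetilde{\el},\la\widetilde{\el}) = \la^{2}(A\widetilde{\el},\widetilde{\el})$, which after integration against $\phi(t)$ are exactly the three contributions on the right-hand side of \eqref{6.11}.

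For the six cross terms, there are two families. First, the two terms not involving $\widetilde{\el}$ are of the form $\phi(t)e^{\pm 2it/\la}(AE^{+},E^{-})$ and its conjugate; since $E^{\pm}\in L^{\infty}((0,T),L^{2}(\R^{3}))$ and $A$ is bounded on $L^{2}(\R^{3})$ by Proposition \ref{p2.2}, the function $t\mapsto \phi(t)(AE^{+},E^{-})$ lies in $L^{1}((0,T))$, so the Riemann--Lebesgue lemma forces these contributions to vanish as $\la\to 0$. Second, the four mixed terms have the form
$$\int_{0}^{T}\phi(t)e^{\pm it/\la}(AE^{\pm},\la\widetilde{\el})\,dt \quad \text{(and symmetric variants)},$$
and after absorbing the phase into the second slot of the inner product they become $L^{2}_{t,x}$-pairings of the fixed element $\phi\,AE^{\pm}\in L^{2}_{t,x}$ against $e^{\mp it/\la}\la\widetilde{\el}$. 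The algebraic identity
$$e^{\mp it/\la}\la\widetilde{\el} = \el_{\pm} - E^{\pm} - e^{\mp 2it/\la}E^{\mp},$$
obtained directly from \eqref{6.3} and \eqref{6.3a}, shows that $e^{\mp it/\la}\la\widetilde{\el}\rightharpoonup 0$ weakly in $L^{2}_{t,x}$: indeed $\el_{\pm}\rightharpoonup E^{\pm}$ by construction of the correctors, and $e^{\mp 2it/\la}E^{\mp}\rightharpoonup 0$ by Riemann--Lebesgue again. The pairing therefore vanishes in the limit.

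The main obstacle is precisely this last family of cross terms: the bare weak convergence $\la\widetilde{\el}\rightharpoonup 0$ is not enough, because multiplication by the fast oscillation $e^{\pm it/\la}$ could in principle regenerate low-frequency content. The key is the algebraic identity above, which reduces the question to two weak limits already at our disposal; this is the rigorous counterpart of the heuristic that the $L^{2}$-mass of $\la\el$ asymptotically splits between the two frequency regions $\tau\approx\pm 1/\la$ (captured by $E^{\pm}$) and a leftover low-frequency part, which will eventually carry the microlocal defect measure $\nu^{E}$.
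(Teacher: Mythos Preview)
Your proof is correct, and for the diagonal terms and the $E^{+}$/$E^{-}$ cross terms it coincides with the paper's argument (Riemann--Lebesgue for the oscillating phase $e^{\pm 2it/\la}$). The handling of the four mixed terms involving $\la\widetilde{\el}$, however, is genuinely different from what the paper does.

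The paper treats the mixed terms by a spatial frequency splitting: writing $(A\la\widetilde{\el}, e^{-it/\la}E^{+}) = (\la\widetilde{\el}, e^{-it/\la}A^{\ast}E^{+})$, it fixes $B>0$ large, observes that $\int_{|\xi|\geq B}\int dt |\mathcal{F}A^{\ast}E^{+}|^{2}$ is small because $A^{\ast}E^{+}\in L^{2}$, and for the low-frequency part $|\xi|\leq B$ invokes the earlier proposition \eqref{6.3b}, which says precisely that $\la\widetilde{\el}$ carries no $L^{2}$-mass at bounded spatial frequencies. Your approach instead bypasses \eqref{6.3b} entirely: the algebraic identity $e^{\mp it/\la}\la\widetilde{\el} = \el_{\pm} - E^{\pm} - e^{\mp 2it/\la}E^{\mp}$ reduces the mixed pairing to the very definition of $E^{\pm}$ as the weak $L^{2}_{t,x}$ limit of $\el_{\pm}$, plus another Riemann--Lebesgue application.

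Your route is more elementary and self-contained for this proposition, since it avoids the Fourier-side machinery leading to \eqref{6.3b}. The paper's route, on the other hand, is the natural one within the narrative being built: \eqref{6.3b} is exactly the hypothesis needed in the subsequent Proposition~\ref{pg} to construct the microlocal defect measure for $\la\widetilde{\el}$, so the paper is simply reusing the same frequency-localization lemma twice rather than introducing a separate argument here.
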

\begin{proof}
First of all we observe that 
\begin{equation}
\lim_{\la\to 0}\int dt\phi(t)(A E^{+}e^{it/\la}, E^{-}e^{-it/\la})= 0
\label{6.12}
\end{equation}
Then we also have 
\begin{equation}
\lim_{\la\to 0}\int dt\phi(t)\la(A\widetilde{\el}, E^{+}e^{-it/\la})= 0
\label{6.13}
\end{equation}
In fact if we denote by $A^{\ast}$ the adjoint operator of $A$ we have that $A^{\ast} E^{+}$ is bounded in $L^{2}$ and as a consequence for any $\eta>0$, there exists $B>0$ such that
\begin{equation}
\label{6.14}
\int_{|\xi|\geq B}\int dt |\mathcal{F} A^{\ast} E^{+}|^{2}dt \leq \eta
\end{equation}
Combining \eqref{6.14} with \eqref{6.3b} we get \eqref{6.13}.
\end{proof}
In order to prove the Theorem \ref{t6} and to get \eqref{6.1a} it remains  only to investigate  
\begin{equation}
\label{6.15} \lim_{\la\to 0} \int  dt \la^{2}\phi(t)(A \widetilde{\el}, \widetilde{\el}).
\end{equation}
The sequence $\la \widetilde{\el}$ fits in the framework of microlocal defect measures of P. G\`erard on \cite{Ger91} but as already explained we need  his prove to our sequence.
\begin{proposition}
Let $\wl$ be  a bounded sequence of functions of $L^{2}_{t,x}$ which converges weakly to $0$, such that for every compact set $K\subset \R^{3}$ one has
$$\lim_{\la\to 0}\int_{K}d\xi\int dt |\mathcal{F}\wl(t,\xi)|=0,$$
then, there exists a positive measure $\nu^{GT}$ on $\R\times\R^{3}\times S^{2}$, such that for any $A\in \psi_{comp}^{0}(\R^{3},\mathcal{K}(\R^{3}))$ and of principal symbol $a(x,\xi)$ and for any $\phi\in \mathcal{D}(\R)$ it holds
$$\lim_{\la\to 0}\int dt\phi(t)(A\wl,\wl)=\langle \nu^{GT}(dt,dx,d\xi), \phi(t)a(x,\xi)\rangle.$$
\label{pg}
\end{proposition}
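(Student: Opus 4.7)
The plan is to adapt the P. G\`erard microlocal defect measure construction to the present situation, where the pseudodifferential operators are homogeneous only in the spatial frequency $\xi$ while the sequence $\wl$ depends also on time. The extra hypothesis on the low spatial frequencies will be used precisely to kill the contribution of lower--order terms, playing the role Rellich's theorem plays in the classical construction.

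\textbf{Step 1 (Bilinear form and extraction).} For $\phi\in\mathcal{D}(\R)$ and $A\in\psi^{0}_{comp}(\R^{3},\mathcal{K}(\R^{3}))$ set
\begin{equation*}
L_{\la}(\phi,A)=\int dt\,\phi(t)(A\wl(t,\cdot),\wl(t,\cdot))_{L^{2}_{x}}.
\end{equation*}
By Proposition \ref{p2.2} and Calder\'on--Vaillancourt, $\|A\|_{\mathcal{L}(L^{2}_{x})}\lesssim \sup|a|+\cdots$, so
$|L_{\la}(\phi,A)|\leq \|\phi\|_{\infty}\|A\|_{\mathcal{L}(L^{2}_{x})}\|\wl\|^{2}_{L^{2}_{t,x}}\leq C.$
Since $\mathcal{D}(\R)$ and $\psi^{0}_{comp}$ admit countable dense subsets (in the relevant operator topology), a diagonal extraction produces a subsequence $\la_{n}$ along which $L_{\la_{n}}(\phi,A)$ converges for every $(\phi,A)$; call the limit $L(\phi,A)$. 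It is bilinear and continuous.

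\textbf{Step 2 (Dependence only on the principal symbol).} The main point is to show that if $A\in\psi^{-1}_{comp}(\R^{3},\mathcal{K}(\R^{3}))$, then $L(\phi,A)=0$. Write $A=A_{R}+A^{R}$ by cutting the $\xi$-frequencies at level $R$, i.e.\ $A_{R}=\mathrm{OP}(a(x,\xi)\chi(|\xi|/R))$ and $A^{R}=A-A_{R}$, with $\chi\in C^{\infty}_{c}$ equal to $1$ on $[0,1]$. Since $A$ has order $-1$, $\|A^{R}\|_{\mathcal{L}(L^{2}_{x})}\lesssim 1/R$, so the contribution of $A^{R}$ to $L_{\la}(\phi,A)$ is bounded by $C/R$, uniformly in $\la$. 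For $A_{R}$, the kernel has $\xi$-support in $\{|\xi|\le 2R\}$, and thus
\begin{equation*}
|(A_{R}\wl(t,\cdot),\wl(t,\cdot))|\lesssim \|A_{R}\|_{\mathcal{L}(L^{2}_{x})}\int_{|\xi|\le 2R}|\mathcal{F}\wl(t,\xi)|\,\|\wl(t,\cdot)\|_{L^{2}_{x}}\,d\xi
\end{equation*}
(using Cauchy--Schwarz and compact support in $\xi$ of the image). Integrating in $t$ against $\phi$ and applying Cauchy--Schwarz in $t$, the $L^{2}_{t,x}$-bound on $\wl$ together with the assumption
$\int_{|\xi|\le 2R}d\xi\int|\mathcal{F}\wl(t,\xi)|\,dt\to 0$
shows that $L_{\la}(\phi,A_{R})\to 0$ as $\la\to 0$ for each fixed $R$. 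Letting first $\la\to 0$ and then $R\to\infty$ yields $L(\phi,A)=0$. Consequently $L(\phi,A)$ depends only on the principal symbol $a(x,\xi)|_{|\xi|=1}$ of $A$, so it defines a continuous bilinear functional $\Lambda$ on $\mathcal{D}(\R)\times C^{\infty}_{c}(\R^{3}\times S^{2},\mathcal{K}(\R^{3}))$.

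\textbf{Step 3 (Positivity and Riesz representation).} If $\phi\geq 0$ and the principal symbol $a(x,\xi)$ is a nonnegative operator on $H$, the sharp G\aa rding inequality applied fibrewise gives an operator $B\in\psi^{-1}_{comp}(\R^{3},\mathcal{K}(\R^{3}))$ with $(A\varphi,\varphi)_{L^{2}_{x}}\ge (B\varphi,\varphi)_{L^{2}_{x}}$ for every $\varphi\in L^{2}_{x}$. Multiplying by $\phi(t)\ge 0$, integrating in $t$, and passing to the limit along $\la_{n}$ using Step 2 gives
$L(\phi,A)\ge \lim L_{\la_{n}}(\phi,B)=0.$
Hence $\Lambda$ is a positive bilinear form on $\mathcal{D}(\R)\times C^{\infty}_{c}(\R^{3}\times S^{2},\mathcal{K}(\R^{3}))$. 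By the Schwartz kernel theorem (scalar case of Riesz' representation) it is represented by a positive Radon measure $\nu^{GT}$ on $\R\times\R^{3}\times S^{2}$, taking values in the trace-class operators on $H$, so that
\begin{equation*}
L(\phi,A)=\bigl\langle\nu^{GT}(dt,dx,d\xi),\,\phi(t)a(x,\xi)\bigr\rangle,
\end{equation*}
which is the claimed formula.

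\textbf{Main obstacle.} The delicate point is Step 2: in the classical H-measure theorem the vanishing of the lower-order contributions is obtained from compactness of the operator on $L^{2}(\Omega)$ combined with weak convergence of the sequence. Here the sequence lives in $L^{2}_{t,x}$ and compactness of $A$ in $L^{2}_{x}$ alone is not enough because we have no uniform control of $\|\wl(t,\cdot)\|_{L^{2}_{x}}$ in $t$. It is precisely to overcome this that the hypothesis on the low spatial Fourier frequencies has been built in; the dyadic splitting in $|\xi|$ converts the problem into an $L^{1}_{t,\xi}$ smallness statement which is exactly what the hypothesis provides.
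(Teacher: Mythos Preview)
Your approach is essentially the paper's: the paper isolates your Steps 2 and 3 as a preparatory lemma (equations (6.16)--(6.17)), proving that order $-1$ contributions vanish and that the quadratic form is asymptotically nonnegative via a square-root/G\aa rding construction, and then defers to G\'erard \cite{Ger91} for the representation step. Your write-up is actually more explicit than the paper's, which at the key points simply cites Propositions~\ref{p2.2}/\ref{p2.3}; in particular, your ``Main obstacle'' paragraph correctly identifies why the ordinary Rellich argument in $x$ must be replaced by the low-frequency hypothesis.

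One technical wrinkle in Step~2: the displayed inequality for $|(A_{R}\wl(t,\cdot),\wl(t,\cdot))|$ is not quite right as written (the factor $\|A_R\|_{\mathcal L(L^2_x)}$ does not pair naturally with the $L^1_\xi$ norm of $\mathcal F\wl$), and even granting it, the subsequent Cauchy--Schwarz in $t$ does not close: you are pairing an $L^1_t$ quantity against $\|\wl(t,\cdot)\|_{L^2_x}$, which is only controlled in $L^2_t$. The clean bound is
\[
|(A_R\wl,\wl)|\le \|A_R\|\,\|\mathcal T_{2R}\wl(t,\cdot)\|_{L^2_x}\|\wl(t,\cdot)\|_{L^2_x},
\]
followed by Cauchy--Schwarz in $t$, which requires the $L^2$ version of the low-frequency hypothesis, $\int_{|\xi|\le R}\!\int|\mathcal F\wl|^{2}\,dt\,d\xi\to 0$. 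This is in fact what the paper's own proof of the preceding proposition establishes (the missing square in \eqref{6.3b} and in the statement of Proposition~\ref{pg} are typographical slips), so your mechanism is correct once the intended hypothesis is used.
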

To prove the previous theorem we follow the same line of arguments as in \cite{Ger91}, we start with the following lemma.
\begin{lemma}
With the assumptions as in the Propositions \ref{pg} it holds
\begin{equation}
\label{6.16}
\lim_{\la\to 0}\Im\int dt\phi(t)(A\wl,\wl)=0,
\end{equation}
\begin{equation}
\label{6.17}
\lim_{\la\to 0}\Re\int dt\phi(t)(A\wl,\wl)\geq 0.
\end{equation}
\end{lemma}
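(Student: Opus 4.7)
The plan is to follow the strategy of P.~G\`erard for constructing H-measures, relying crucially on the pseudo-differential calculus in $\psi^{0}_{comp}(\R^{3},\mathcal{K}(\R^{3}))$ and on the Generalized Rellich Theorem (Proposition~\ref{p2.3}). The key observation is that since $\wl\rightharpoonup 0$ weakly in $L^{2}_{t,x}$, any compact operator applied to $\wl$ converges strongly to $0$, so all lower-order symbolic remainders contribute nothing in the limit.

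For the vanishing of the imaginary part \eqref{6.16}, I would first rewrite
\[
2i\,\Im(A\wl,\wl) = \bigl((A-A^{*})\wl,\wl\bigr).
\]
In the setting where the statement is used (hermitian principal symbol, $a = a^{*}$, which is the case needed in the construction of the defect measure associated to $\la\widetilde{\el}$), the operator $A-A^{*}$ has vanishing principal symbol, and thus belongs to $\psi^{-1}_{comp}(\R^{3},\mathcal{K}(\R^{3}))$. By Proposition~\ref{p2.3}, $A-A^{*}$ is compact from $L^{2}_{loc}$ to $L^{2}$, so $(A-A^{*})\wl\to 0$ strongly in $L^{2}$. Testing against the bounded sequence $\wl$ and integrating against the compactly supported $\phi(t)$ yields \eqref{6.16}.

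For the lower semicontinuity \eqref{6.17}, I would invoke an operator-valued G\r{a}rding inequality. Given $a(x,\xi)\geq 0$ hermitian with values in $\mathcal{K}(H)$, I would construct a symbol $b(x,\xi) \in S^{0}$ satisfying $b^{*}b = a$ modulo $S^{-1}$, via the operator-theoretic square root applied pointwise to a regularization $a+\varepsilon\,\mathrm{Id}$ on the compact $\xi$-support, followed by a passage $\varepsilon \to 0$. Setting $B = OP(b) \in \psi^{0}_{comp}$, the Kohn-Nirenberg symbol calculus gives $B^{*}B = A + R$ with $R \in \psi^{-1}_{comp}(\R^{3},\mathcal{K}(\R^{3}))$. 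Therefore
\[
\bigl(A\wl,\wl\bigr) = \|B\wl\|_{L^{2}}^{2} - \bigl(R\wl,\wl\bigr),
\]
the first term is nonnegative, and the second, when multiplied by $\phi(t)$ and integrated in $t$, tends to $0$ again by compactness of $R$ and $\wl\rightharpoonup 0$. This gives \eqref{6.17}.

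The main obstacle will be the symbolic square root construction: $b(x,\xi) = \sqrt{a(x,\xi)}$ is defined pointwise via the functional calculus on the self-adjoint compact operator $a(x,\xi)$, but verifying that it yields a genuine smooth symbol in $S^{0}$ with $\mathcal{K}$-valued estimates of all orders requires a careful analysis of the regularization $\sqrt{a+\varepsilon\,\mathrm{Id}}$, uniform bounds on all its $(x,\xi)$-derivatives, and the limit $\varepsilon\to 0$; here the compact $\xi$-support built into $\psi_{comp}$ is essential. Once the symbolic square root is in hand, the rest of the argument is routine bookkeeping within the pseudo-differential calculus and an application of the Generalized Rellich Theorem.
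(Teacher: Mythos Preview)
Your treatment of \eqref{6.16} matches the paper's proof exactly: write $2i\,\Im(A\wl,\wl)=((A-A^{*})\wl,\wl)$, observe that for hermitian principal symbol $A-A^{*}\in\psi^{-1}_{comp}(\R^{3},\mathcal{K}(\R^{3}))$, and conclude by the Generalized Rellich Theorem.

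For \eqref{6.17} there is a genuine gap in your scheme. You propose to construct $b$ with $b^{*}b=a$ modulo $S^{-1}$ by taking $\sqrt{a+\varepsilon\,\mathrm{Id}}$ and then passing $\varepsilon\to 0$ \emph{at the symbol level}, so as to obtain an exact factorization $B^{*}B=A+R$. But $\sqrt{a(x,\xi)}$ is in general \emph{not} a smooth symbol: wherever the nonnegative matrix $a(x,\xi)$ has a vanishing eigenvalue, the square root loses regularity (already in the scalar case, $a=\xi_{1}^{2}$ gives $\sqrt{a}=|\xi_{1}|$). Compact $\xi$-support does not help with this; the obstruction is local near the zero set of $a$. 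Hence the uniform $S^{0}$ bounds you would need on $\sqrt{a+\varepsilon}$ as $\varepsilon\to 0$ simply fail, and no amount of careful analysis recovers a genuine $B\in\psi^{0}_{comp}$ with $B^{*}B=A+R$.

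The paper (following G\'erard) avoids this by \emph{not} passing to the limit in the symbol. One fixes $\delta>0$, takes the smooth square root $b=\sqrt{a+\delta}=\delta^{1/2}+b'$ (well defined since $a+\delta\geq\delta>0$), localizes with a cutoff $\varphi\equiv 1$ on $\operatorname{supp}a$, and obtains
\[
B^{*}B=|\varphi|^{2}\delta+A+R,\qquad R\in\psi^{-1}_{comp}(\R^{3},\mathcal{K}(\R^{3})).
\]
This yields
\[
\Re\int\phi(t)(A\wl,\wl)\,dt\;\geq\;-\delta\int\phi(t)\|\varphi\wl\|_{L^{2}}^{2}\,dt+\Re\int\phi(t)(R\wl,\wl)\,dt.
\]
Now send $\la\to 0$ first: the $R$-term vanishes by compactness, and the first term on the right is bounded by $-C\delta$. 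Finally let $\delta\to 0$. The order of the limits is the whole point; your version collapses them and that is where it breaks.
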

\begin{proof}
Since $A$ is Hermitian we  have
$$\Im(A\wl,\wl)=\frac{1}{2i}((A-A^{\ast})\wl,\wl),$$
where $A-A^{\ast}\in \psi_{comp}^{-1}(\R^{3},\mathcal{K}(\R^{3}))$, so \eqref{6.16} follows by using Proposition \eqref{p2.2}. For the real part let be $\delta>0$, then $a+\delta\in C^{\infty}(S^{\ast}\Omega, \mathcal{L}(\R^{3})$ and we can extract the square root $B$, namely $b=\delta^{1/2}+b'$. Let be $B'$ such that $B'=OPS(b')$ and $B$ such that $B=\varphi(\delta^{1/2}+B')$, $\varphi\in C^{\infty}_{0}(\R^{3})$, $\varphi=1$ on the support of $a$, then $OPS^{0}(B)=OPS^{0}(\varphi \delta^{1/2}+\varphi B')=\varphi b\in \psi_{comp}^{0}(\R^{3},\mathcal{L}(\R^{3}))$. So we have that
$$B^{\ast}B=|\varphi|^{2}\delta+A+R, \qquad R\in \psi_{comp}^{-1}(\R^{3},\mathcal{K}(\R^{3})),$$
but then we have
$$\Re\int dt\phi(t)(A\wl,\wl)\geq -\delta \|\varphi\wl\|^{2}_{L^{2}}+ \Re(R\wl,\wl).$$
We end up with \eqref{6.17} by sending $\la$ to $0$ and by using again Proposition \eqref{p2.2}.
\end{proof}
From now on the proof of Proposition \ref{pg} follows the same line of arguments as in \cite{Ger91}.
So we can apply the Propositions \ref{pg} to the sequence $\la\widetilde{\el}$ end we can conclude that there exists a positive measure $\nu^{\widetilde{E}}$ such that
$$\lim_{\la\to 0}\int dt\phi(t)(A\la\widetilde{\el},\la\widetilde{\el})=\langle \nu^{\widetilde{E}}(dt,dx,d\xi), \phi(t)a(x,\xi)\rangle.$$
If we apply the remark in Exercise 1.5 of \cite{Ger91} , since $\la\widetilde{E^{\la}}$ is a  gradient and we are in the finite dimensional case we have that there exists a positive measure $\nu^{E}$ such that
$$\nu^{\widetilde{\el}}(dt,dx,d\xi)=\xi_{i}\xi_{j}\nu^{E}(dt,dx),$$
this ends the proof of the Theorem \ref{t6}.

\section{Proof of the Main Theorem \ref{tm1}}
\begin{itemize}
\item[{\bf(i)}] It follows from \eqref{4.1.1} and \eqref{4.1.3b}.
\item[{\bf(ii)}] It follows from \eqref{4.1.2}.
\item[{\bf(iii)}] It is proved in Proposition \ref{p5.1.1}.
\item[{\bf(iv)}] By taking into account that we can decompose $\ul$ as $\ul=P\ul+Q\ul$ and by using Proposition \ref{p5.1.1} with Corollary \ref{c3} we get 
\begin{equation*}
P\ul\longrightarrow u\quad \text{strongly  in $L^{2}([0,T];L^{2}_{loc}(\R^{3}))$}.
\end{equation*}
\item[{\bf(v)}] It follows from Theorem \ref{t6}.
\item[{\bf(vi)}] First of all we apply the Leray projector $P$ to the momentum equation of the system \eqref{3.1}, then we have
\begin{equation}
\partial_{t}P(\rho^{\lambda} u^{\lambda})+P\dive(\rho^{\lambda} u^{\lambda}\otimes u^{\lambda})=\mu\Delta Pu^{\lambda}+P\dive(\la\el\otimes\la\el).
\end{equation}
It is a straightforward computation to pass into the limit in the terms $\partial_{t}P(\rho^{\lambda} u^{\lambda})$ and $\Delta Pu^{\lambda}$, so, for any $\varphi\in \mathcal{D}([0,T]\times \R^{3})$ we obtain
\begin{equation}
\langle P(\partial_{t}(\rl\ul)-\mu\Delta \ul),\varphi\rangle\longrightarrow \langle P(\partial_{t}u-\mu\Delta u),\varphi\rangle.
\label{l1} 
\end{equation}
In order to study the convergence of  the convective term we decompose it in this way
\begin{align}
\langle P\dive(\rho^{\lambda} u^{\lambda}\otimes u^{\lambda}), \varphi\rangle&=
\langle P\dive((\rho^{\lambda} -1)u^{\lambda}\otimes u^{\lambda}), \varphi\rangle\notag\\
&+\langle P\dive( u^{\lambda}\otimes u^{\lambda}), \varphi\rangle\notag\\
&=I_{1}+I_{2}
\label{l2}
\end{align}
The term $I_{1}$ goes strongly to zero. In fact it is enough to take into account $\rl-1$ goes weakly to zero in $L^{\infty}_{t}L^{k}_{2}$, while by interpolation $\ul$ is strongly convergent in $L^{\infty}_{t}L^{k'}_{2}$. Concerning $I_{2}$ we have as $\la\to 0$,
\begin{align}
I_{2}&=\langle \dive(P\ul\otimes Q\ul),P\varphi\rangle\nonumber+\langle \dive(Q\ul\otimes Q\ul),P\varphi \rangle\nonumber\\&\longrightarrow \langle \dive(u \otimes u),P\varphi\rangle=\langle P\dive(u \otimes u), \varphi\rangle.
\label{l3}
\end{align}
Finally to establish the convergence  of the term $P\dive(\la\nabla\vl\otimes\la \nabla\vl)$  we have to take the limit of $\langle P\dive(\la\nabla\vl\otimes\la\nabla\vl), \varphi\rangle$ for any $\varphi\in \mathcal{D}([0,T]\times \R^{3})$, but we have no strong convergence for $\la\nabla\vl$ as $\la\to 0$, so we  apply the Theorem \ref{t6} and use the microlocal defect measure defined in \eqref{6.1a} and we have as $\la\to 0$,
\begin{align}
\label{l4}
\langle P\dive(\la\nabla\vl\otimes\la\nabla\vl),\varphi \rangle&=\langle \la\nabla\vl\otimes\la\nabla\vl, \nabla P\varphi \rangle\\
&+\int_{0}^{T}\int_{\R^{3}\times S^{2}}\nabla P\varphi \frac{\xi\otimes\xi}{|\xi|^{2}}d\nu^{E}dxdt\notag\\
&+\int_{0}^{T}\int_{\R^{3}}\nabla P\varphi(E^{+}\otimes E^{+}+E^{-}\otimes E^{-})dxdt.\notag
\end{align}
So, by using together \eqref{l1}, \eqref{l2}, \eqref{l3}, \eqref{l4}  we have that $u$ satisfies the following equation  in $\mathcal{D}'([0,T]\times \R^{3})$
\begin{align*}
P\Big(\partial_{t} u&-\Delta u+(u\cdot\nabla)u-\notag\\
& \dive(E^{+}\otimes E^{+}+E^{-}\otimes E^{-})-\dive\langle \nu^{E}, \frac{\xi\otimes \xi}{|\xi|^{2}}\rangle\Big)=0.
\end{align*}
\end{itemize}

\section{Equations for the correctors: proof of the Main Theorem \ref{tm2}}

The purpose of this section is to show  how to construct  the correctors $E^{+}$ and $E^{-}$ in the case of smooth solutions for the system \eqref{3.1}. In particular we will perform the quasineutral limit in the framework of the Theorem \ref{t2} and we will end up  with the proof of the Main Theorem \ref{tm2}. We will divide the proof of the Main Theorem \ref{tm2} in different steps. First of all we decompose the electric and the velocity fields in order to single out the oscillating parts, then we show the existence of the correctors and finally the equations that they satisfy. A similar analysis has been carried out in \cite{G95} in the case of a periodic domain for the Vlasov Poisson system.

For the computations we have to perform later on it is more convenient to  rewrite the expression \eqref{6.2} for $\el$ in terms of its Fourier transform,
\begin{align}
\FF\el(t,\xi)&=\int_{0}^{t}\frac{1}{\la}\FF F(s,\xi) \sin\left(\frac{t-s}{\la}\right)ds\notag\\
&+\FF \el(0,\xi)\cos\left(\frac{t}{\la}\right)+\la\FF \el_{t}(0,\xi)\sin\left(\frac{t}{\la}\right).
\label{7.1}
\end{align}
\subsection{Step 1: decomposition of the electric field}
In this section we decompose the electric field in a way to isolate the time oscillations.
First of all we define the following operator that cuts off the oscillations in time, for any $\phi\in C^{0}(0,T, H^{s})$, $s\geq 0$ we set
\begin{equation}
\mathcal{H}^{\la}\phi(t,x)=\frac{1}{2\pi\la}\int_{t}^{t+2\pi\la}\phi(\sigma, x)d\sigma, \qquad \mathcal{G}^{\la}=I-\mathcal{H}^{\la}.
\label{7.1.1}
\end{equation}
Then we decompose $\el$ in the following way
\begin{equation}
\el_{1}=\mathcal{G}^{\la}\el \qquad \el_{2}=\mathcal{H}^{\la}\el,
\end{equation}
clearly, $\el_{1}$ is the oscillatory part of $\el$, while $\el_{2}$ is its averaged part.
The following proposition hold
\begin{proposition}
\label{7.1}
Under the assumptions of the Theorem \ref{t2} there exist three vector fields (which are gradients), $\el_{1}$ , $\el_{2}$, and $W^{\la}$ such that $\el=\el_{1} +\el_{2}$ with
\begin{itemize}
\item[(i)] $\|\la\el_{1}\|_{L^{\infty}_{t}H^{s-1}_{x}}\leq C$,
\item[(ii)] $\partial_{t}W^{\la}=\el_{1}, \quad \|W^{\la}\|_{L^{\infty}_{t}H^{s-1}_{x}}\leq C$ and $W^{\la}\rightharpoonup 0$ weakly in $L^{2}$,
\item[(iii)]  $\|\el_{2}\|_{L^{\infty}_{t}H^{s-1}_{x}}\leq C$.
\end{itemize}
\end{proposition}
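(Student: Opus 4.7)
The plan is to exploit the Duhamel representation \eqref{7.1} together with the crucial algebraic fact that the averaging window $[t,t+2\pi\la]$ of $\mathcal{H}^{\la}$ has length exactly one period of the fundamental oscillators $\cos(t/\la)$ and $\sin(t/\la)$, so that $\mathcal{H}^{\la}[\cos(t/\la)]=\mathcal{H}^{\la}[\sin(t/\la)]=0$ pointwise in $t$. A preliminary step is to bound $F^{\la}$ from the equation \eqref{E} uniformly in $L^{\infty}_{t}H^{s-1}_{x}$ using the smooth-solution hypotheses of Theorem~\ref{t2}: since $\|\rl-1\|_{L^{\infty}_{t}H^{s}}$ and $\|\la\el\|_{L^{\infty}_{t}H^{s+1}}$ are controlled and $s\geq 4$, the algebra property yields uniform $H^{s-1}$ bounds on $\rl \ul\otimes \ul$, $(\rl)^{\gamma}\mathbb{I}$, $\la^{2}\el\otimes\el$ and $\la^{2}|\el|^{2}\mathbb{I}$; the viscous contribution $-2\nabla\dive\,\ul$ is controlled by absorbing one derivative into the Duhamel kernel $\sin((t-s)/\la)/\la$ via integration by parts in $s$ (trading a derivative for a factor of $\la$).

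Part~(i) is then immediate: $\mathcal{H}^{\la}$ is a convex average, hence a contraction on every $L^{\infty}$-type space, so $\|\mathcal{G}^{\la}\|_{L^{\infty}_{t}H^{s-1}\to L^{\infty}_{t}H^{s-1}}\leq 2$ uniformly in $\la$; since $\mathcal{G}^{\la}$ acts only in time, $\la\el_{1}=\mathcal{G}^{\la}(\la\el)$ is bounded by $2\|\la\el\|_{L^{\infty}_{t}H^{s-1}}$. For part~(iii), apply $\mathcal{H}^{\la}$ term by term to \eqref{7.1}: the two initial-data oscillators vanish identically, and for the Duhamel convolution Fubini combined with the elementary computation
\begin{equation*}
\int_{t}^{t+2\pi\la}\!\!\sin\!\left(\tfrac{\sigma-s}{\la}\right)d\sigma=\begin{cases}0, & s\leq t,\\ \la\bigl(1-\cos((t-s)/\la)\bigr), & s\in[t,t+2\pi\la],\end{cases}
\end{equation*}
reduces $\mathcal{H}^{\la}$ applied to the Duhamel term to $\frac{1}{2\pi\la}\int_{t}^{t+2\pi\la}F^{\la}(s)\bigl(1-\cos((t-s)/\la)\bigr)ds$, which is bounded in $H^{s-1}$ by $2\|F^{\la}\|_{L^{\infty}_{t}H^{s-1}}$.

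Part~(ii) is the main difficulty. Set $W^{\la}(t,x):=\int_{0}^{t}\el_{1}(\tau,x)\,d\tau-c^{\la}(x)$ for a time-independent correction $c^{\la}$ to be tuned so that $W^{\la}\rightharpoonup 0$ weakly in $L^{2}$; then automatically $\partial_{t}W^{\la}=\el_{1}$. To prove the uniform $L^{\infty}_{t}H^{s-1}$ bound, write $\int_{0}^{t}\el_{1}\,d\tau=\int_{0}^{t}\el\,d\tau-\int_{0}^{t}\el_{2}\,d\tau$ and integrate \eqref{7.1} in $t$: the initial-data pieces yield $\la\el(0,\cdot)\sin(t/\la)$ and $-\la^{2}\partial_{t}\el(0,\cdot)(\cos(t/\la)-1)$, both in $H^{s-1}$ uniformly because $\la\el(0,\cdot)$ is bounded by hypothesis and because differentiating the Poisson equation gives $\la^{2}\partial_{t}\el=-Q(\rl \ul)$, which is uniformly bounded in $H^{s-1}$. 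The Duhamel contribution splits into a non-oscillating bulk $\sim\int_{0}^{t}F^{\la}(s)\,ds$ and an oscillating convolution; by the explicit form of $\el_{2}$ from part~(iii) the non-oscillating bulk cancels against $\int_{0}^{t}\el_{2}\,d\tau$ modulo $O(\la)$ remainders, leaving only bounded oscillating pieces. For the weak limit, absorb the constant-in-$t$ residual $\la^{2}\partial_{t}\el(0,\cdot)$ into $c^{\la}$; every remaining contribution to $W^{\la}$ is then a uniformly-bounded $x$-amplitude multiplied by $\cos(t/\la)$, $\sin(t/\la)$, or a time-convolution thereof, and each tends to $0$ weakly in $L^{2}_{t,x}$ by a Riemann--Lebesgue-type argument.

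The main obstacle is harmonizing the uniform $H^{s-1}$ bound of (ii) with the weak convergence requirement, which forces one to distinguish meticulously between genuinely $O(1)$-oscillatory contributions and any $\la$-independent residual that must be absorbed into $c^{\la}$. A secondary technicality is the one-derivative loss in bounding $F^{\la}$ stemming from $\nabla\dive\,\ul$; this is handled either by invoking sufficient Sobolev regularity of $\ul$ from the smooth-solution framework of Theorem~\ref{t2}, or by the integration-by-parts trade described above.
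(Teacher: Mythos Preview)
Your approach is correct and essentially coincides with the paper's, which is extremely terse: it asserts that $F^{\la}$ is uniformly bounded in $L^{\infty}_{t}H^{s-1}_{x}$, deduces (i) and (iii) without further comment, and for (ii) simply writes down the explicit formula
\[
\FF W^{\la}=\la\,\FF\el(0,\xi)\sin\!\tfrac{t}{\la}-\la\,\FF\el_{t}(0,\xi)\cos\!\tfrac{t}{\la}
+\int_{0}^{t}\!d\sigma\!\int_{0}^{\sigma}\!\tfrac{\FF F(s,\xi)}{\la}\sin\!\tfrac{\sigma-s}{\la}\,ds
-\int_{0}^{t}\!\FF\mathcal{H}^{\la}\el(\sigma,\xi)\,d\sigma
\]
(up to typos) and says ``we easily obtain (ii)''. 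This is nothing other than your $\int_{0}^{t}\el_{1}\,d\tau-c^{\la}$, with the constant $c^{\la}$ already chosen. Your explicit computation of $\mathcal{H}^{\la}$ applied to the Duhamel term, the cancellation of the non-oscillating bulk $\int_{0}^{t}F^{\la}$ against $\int_{0}^{t}\el_{2}$, and the Riemann--Lebesgue argument for the weak limit all fill in details that the paper omits.

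One small caution: your proposed integration-by-parts in $s$ to absorb the extra derivative from $-2\nabla\dive\,\ul$ does not straightforwardly gain $x$-regularity (writing $\tfrac{1}{\la}\sin\tfrac{t-s}{\la}=\partial_{s}\cos\tfrac{t-s}{\la}$ transfers a \emph{time} derivative to $F^{\la}$, not a spatial one). The cleaner route is the one you also mention: in the smooth framework of Theorem~\ref{t2} one has $\ul\in L^{\infty}_{t}H^{s}_{x}$, so $\nabla\dive\,\ul\in L^{\infty}_{t}H^{s-2}_{x}$; since $s\geq4$ this is more than enough, and one simply accepts the statement with $s-1$ replaced by $s-2$ (or invokes the extra regularity of $\la\nabla V^{\la}\in H^{s+1}$ from Theorem~\ref{t2} to recover $H^{s-1}$). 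The paper itself glosses over this point.
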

\begin{proof}
Since $F $ is uniformly bounded in $L^{\infty}_{t}H^{s-1}_{x}$ we have that   $\mathcal{H}^{\la}\el$ is bounded in $L^{\infty}_{t}H^{s-1}_{x}$ and so we get (i) and (iii). Now, if we define
\begin{align}
\FF W^{\la}&=\la \FF \el(0,\xi)\sin\left(\frac{t}{\la}\right)-\la \FF \el_{t}(0,\xi)\cos\left(\frac{t}{\la}\right)\notag\\
&+\int_{0}^{t}d\sigma\int_{0}^{\sigma}ds \frac{\la F(s,\xi)}{\la}\sin\left(\frac{\sigma-s}{\la}\right)+\int_{0}^{t}\FF \mathcal{H}^{\la}\el(\sigma, \xi)d\sigma.
\label{7.1.2}
\end{align}
 we easily obtain (ii).
 \end{proof}
 
\subsection{Step 2: decomposition and limit system for the velocity}
Now we decompose the velocity field $\ul$, in the following way
\begin{equation}
\ul=v^{\la}+W^{\la},
\label{7.2.1}
\end{equation}
where $W^{\la}$ is the corrector introduced in the Proposition \ref{7.1} and we can look at $v^{\la}$ as the velocity field $\ul$ without its oscillatory part $W^{\la}$. For $v^{\la}$ we can prove the following result.
\begin{proposition}
Let $v^{\la}=\ul-W^{\la}$, then for any $s'<s-2$ , $v^{\la}$ and $\rl$ converge in  $C^{0}(0,T;H^{s'}(\R^{3}))$, respectively to $v$ and $1$ and there exists a function $\Pi$, such that $v$ satisfies,
\begin{align}
\dive v&=0 \label{7.2.3}\\
\partial_{t}v+v\cdot\nabla v&-\Delta v=\nabla\Pi	
\label{7.2.2}       
\end{align}
\end{proposition}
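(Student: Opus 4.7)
My plan is to substitute $\ul=v^{\la}+W^{\la}$ and $\el=\partial_{t}W^{\la}+E_{2}^{\la}$ from Proposition \ref{7.1} into the momentum equation of \eqref{3.1}, writing $\rl\el=\el+(\rl-1)\el$ and using the Poisson identity
\[
(\rl-1)\el=\la^{2}\Delta\vl\,\nabla\vl=\dive(\la\el\otimes\la\el)-\tfrac{1}{2}\nabla|\la\el|^{2},
\]
so that the oscillatory time derivative $\partial_{t}W^{\la}$ absorbs into the time-derivative on the left. After collecting every explicit gradient into a scalar $\Pi^{\la}$ (namely $(\rl)^{\gamma}$, $-\tfrac{1}{2}|\la\el|^{2}$, and the $(\mu+\nu)\dive\ul$ term) one obtains
\[
\partial_{t}(\rl v^{\la})+\dive(\rl\ul\otimes\ul)-\mu\Delta v^{\la}+\nabla\Pi^{\la}=\mu\Delta W^{\la}+E_{2}^{\la}+\dive(\la\el\otimes\la\el)-\partial_{t}((\rl-1)W^{\la}),
\]
whose last summand is $O(\la)$ since $\rl-1=\la\cdot(\la\Delta\vl)=O(\la)$ in $L^{\infty}H^{s}$ by the hypothesis on $\|\la\el\|_{L^{\infty}H^{s}}$.

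Next I would establish Aubin--Lions compactness. Theorem \ref{t2} and Proposition \ref{7.1} yield uniform bounds on $v^{\la}$ in $L^{\infty}(0,T;H^{s-1}(\R^{3}))$; reading off the equation above, every right-hand side stays uniformly bounded in $L^{\infty}(0,T;H^{s-3}(\R^{3}))$, and after removing the pressure by the Leray projector $P$, so does $\partial_{t}v^{\la}$. Theorem \ref{S} then delivers $v^{\la}\to v$ strongly in $C^{0}(0,T;H^{s'}_{\mathrm{loc}}(\R^{3}))$ for every $s'<s-2$. The density convergence $\rl\to 1$ in the same topology is immediate from $\rl-1=O(\la)$ in $L^{\infty}H^{s}$. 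Passing to the limit in the continuity equation and using $W^{\la}\rightharpoonup 0$ (so $\ul\rightharpoonup v$ weakly) gives $\dive v=0$.

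For the limit momentum equation I apply $P$, annihilating $\nabla\Pi^{\la}$. Using $\ul=v^{\la}+W^{\la}$, the convective term $\dive(\rl\ul\otimes\ul)$ splits into $\dive(\rl v^{\la}\otimes v^{\la})$ (converging strongly to $\dive(v\otimes v)$ since $H^{s'}$ with $s'>3/2$ embeds in $L^{\infty}$), two cross terms (vanishing in $\mathcal{D}'$ by the strong--weak product $v^{\la}\to v$, $W^{\la}\rightharpoonup 0$), and $\dive(\rl W^{\la}\otimes W^{\la})$. The viscous term converges trivially. It remains to show that
\[
P\big(E_{2}^{\la}+\dive(\la\el\otimes\la\el)-\dive(\rl W^{\la}\otimes W^{\la})+\mu\Delta W^{\la}\big)\longrightarrow 0,
\]
after which $P(\partial_{t}v+(v\cdot\nabla)v-\Delta v)=0$, equivalent by the Hodge decomposition to $\partial_{t}v+(v\cdot\nabla)v-\Delta v=\nabla\Pi$ for a suitable $\Pi$.

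\textbf{Main obstacle.} The crux is proving the last displayed limit. For $E_{2}^{\la}=\mathcal{H}^{\la}\el$, the $\la$-scale time average kills the $e^{\pm it/\la}$-modes, leaving a residue that, through the reduced equation \eqref{E}, coincides to leading order with the source $F^{\la}$; but $F^{\la}$ is manifestly a gradient by construction (via $\nabla\Delta^{-1}\dive\dive(\cdots)$ and $\nabla\dive u$ terms), hence annihilated by $P$. For the quadratic oscillatory terms $\dive(\la\el\otimes\la\el)$ and $\dive(W^{\la}\otimes W^{\la})$, the explicit Duhamel formula \eqref{7.1} and Proposition \ref{7.1} show that both $\la\el$ and $W^{\la}$ are gradients concentrating their $L^{2}$ mass at the temporal frequencies $\pm1/\la$; their pairwise products admit a non-oscillatory weak limit only through $e^{+it/\la}e^{-it/\la}$ cross-interactions. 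Exploiting the gradient structure together with the algebraic identity $(a\cdot\nabla)b+(b\cdot\nabla)a=\nabla(a\cdot b)$ valid for irrotational $a,b$, these cross-interactions reduce after Leray projection to a pure gradient, absorbed into $\Pi$, via a cancellation between the $W^{\la}\otimes W^{\la}$ and $\la\el\otimes\la\el$ contributions. Making this resonant-mode algebraic cancellation rigorous is the main technical difficulty, and it is precisely what motivated the introduction of the corrector $W^{\la}$.
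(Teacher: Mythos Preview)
Your plan is workable but substantially more complicated than the paper's, and the complication is self-inflicted by your choice of the conservative momentum equation. The paper instead rewrites $\eqref{3.1}_{2}$ in the \emph{non-conservative} form
\[
\rl\partial_{t}\ul+\rl(\ul\cdot\nabla)\ul+\nabla(\rl)^{\gamma}=\Delta\ul+\nabla\dive\ul+\rl\el,
\]
substitutes $\ul=v^{\la}+W^{\la}$, and uses $\partial_{t}W^{\la}=E_{1}^{\la}$ so that $\rl\partial_{t}W^{\la}$ cancels exactly against the $\rl E_{1}^{\la}$ part of $\rl\el$, leaving only $\rl E_{2}^{\la}$ on the right. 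No splitting $\rl\el=\el+(\rl-1)\el$ via the Poisson identity is needed, and hence no term $\dive(\la\el\otimes\la\el)$ ever appears.

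The payoff of the non-conservative form is that the only quadratic oscillatory term is $(W^{\la}\cdot\nabla)W^{\la}$, and since $W^{\la}$ is a gradient (Proposition \ref{7.1}), this equals $\tfrac{1}{2}\nabla|W^{\la}|^{2}$: a pure gradient, absorbed into $\Pi$ with no resonance analysis whatsoever. Likewise $\Delta W^{\la}$, $\nabla\dive W^{\la}$, and $E_{2}^{\la}$ are gradients; the $\rl$ prefactors are harmless because $\rl-1=O(\la)$. The cross terms $(v^{\la}\cdot\nabla)W^{\la}$ and $(W^{\la}\cdot\nabla)v^{\la}$ vanish weakly by the strong--weak product. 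That is the whole proof.

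Your ``main obstacle''---the cancellation of $P\dive(\la\el\otimes\la\el)$ against $P\dive(W^{\la}\otimes W^{\la})$---is an artifact of the conservative form: it introduces the extra piece $W^{\la}\dive W^{\la}$ (which is \emph{not} a gradient), and you then have to kill it by matching resonant modes with the Poisson-identity term. That cancellation is real, but proving it rigorously amounts to identifying $\la\el_{1}\approx e^{it/\la}E^{+}+e^{-it/\la}E^{-}$ and $W^{\la}\approx\tfrac{1}{i}(e^{it/\la}E^{+}-e^{-it/\la}E^{-})$, which is exactly the content of Proposition \ref{p7.3}---a result that in the paper's logical order comes \emph{after} the present proposition. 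The identity $(a\cdot\nabla)b+(b\cdot\nabla)a=\nabla(a\cdot b)$ you invoke does not by itself close the argument, because $\dive(a\otimes b+b\otimes a)$ also contains $a\,\dive b+b\,\dive a$, which is not a gradient. In short: switch to the non-conservative form and the obstacle disappears.
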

\begin{proof}
First of all  we can observe that $v^{\la}$ is bounded in $L^{\infty}_{t}H^{s-1}_{x}$ and $\partial_{t}v^{\la}$ is bounded in $L^{\infty}_{t}H^{s-2}_{x}$, so we have that 
\begin{equation}
v^{\la}\longrightarrow v\quad \text{strongly in $C(0,T;H^{s'-1}_{loc}),$ for any $s'<s$.}
\end{equation}
Now we  rewrite the second equation of \eqref{3.1} in terms of $v^{\la}$ and $W^{\la}$.
\begin{align}
\rho\partial_{t}v^{\la}&+\rho(v^{\la}+W^{\la})\cdot\nabla (v^{\la}+W^{\la})+\nabla (\rho^{\la})^{\gamma}\notag\\&=\Delta (v^{\la}+W^{\la}) +\nabla\dive (v^{\la}+W^{\la})+ \rho\el_{2}.
\label{7.2.4}
\end{align}
By the Poisson equation $\la^{2}\dive \el=\rl -1$, we have that 
\begin{equation}
\rl\longrightarrow 1\quad \text{strongly in $C(0,T;H^{s-1})$,}
\label{7.2.4a}
\end{equation}
from this follows \eqref{7.2.3}. Then, by using Proposition \ref{7.1} we have that $W^{\la}$ and $\partial_{x}W^{\la}$ are bounded in $L^{2}$ and converges weakly to $0$, so we can pass into the limit in \eqref{7.2.4} and we conclude with \eqref{7.2.2}.
\end{proof}

\subsection{Step 3: Existence of the correctors}
In this section we will identify and establish the existence of the correctors.  First of all we introduce the operator $T^{\la}_{\pm}$, for any $\phi \in L^{\infty}_{t}L^{2}_{x}$ we set
\begin{equation}
\FF T^{\la}_{\pm}\phi(t,\xi)=e^{\mp t/ \la}\FF\phi(t,\xi), 
\end{equation}
By construction we have that $T^{\la}_{\pm}$ satisfies the following properties.
\begin{itemize}
\item [(T1)] $T^{\la}_{\pm}$ are selfadjoint, act isometrically on $L^{\infty}_{t}H^{s}_{x}$, for all $s$ and $T^{\la}_{+} T^{\la}_{-}=T^{\la}_{-}T^{\la}_{+} =I$,
\item [(T2)] if $\phi^{\la}\rightarrow \phi$, strongly in $L^{2}$, then, $T^{\la}_{+} \rightharpoonup 0$, weakly in $L^{2}$,
\item [(T3)] if $\phi, \psi \in L^{\infty}_{t}H^{s}_{x}$, $s>d/2$, $T^{\la}_{+}\phi T^{\la}_{-}\psi \rightharpoonup 0$ weakly in $L^{2}_{t,x}$.
\end{itemize}
In the next proposition we prove the existence of corrector for the electric field $\el$ and the velocity field $\ul$.
\begin{proposition}
\label{p7.3}
There exists two functions $E^{+}$ and $E^{-}$ in $C^{0}(H^{s-1}_{loc})$ such that for all $s'<s$,
\begin{itemize}
\item [(c1)] $\|\la \el_{1}-T^{\la}_{-}E^{+}-T^{\la}_{+}E^{-}\|_{C^{0}(H^{s'-1}_{loc})}\rightarrow 0,$
\item [(c2)] $\|W^{\la}-\frac{1}{i}T^{\la}_{-}E^{+}-\frac{1}{i}T^{\la}_{+}E^{-}\|_{C^{0}(H^{s'-1}_{loc})}\rightarrow 0.$
\end{itemize}
\end{proposition}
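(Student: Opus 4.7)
The plan is to extract the correctors $E^{\pm}$ directly from the explicit Duhamel-type representation of $\el$ given at the beginning of this section. Applying Euler's formulas to $\sin(\cdot/\la)$ and $\cos(\cdot/\la)$, $\la\el$ splits exactly into two counter-oscillating wave packets,
\begin{equation*}
\la\el(t,x)=T^{\la}_{-}\mathcal{E}^{+}_{\la}(t,x)+T^{\la}_{+}\mathcal{E}^{-}_{\la}(t,x),
\end{equation*}
with explicit amplitudes
\begin{equation*}
\mathcal{E}^{\pm}_{\la}(t,x)=\frac{\la\el(0,x)}{2}\pm\frac{\la^{2}\el_{t}(0,x)}{2i}\pm\frac{1}{2i}\int_{0}^{t}e^{\mp is/\la}F^{\la}(s,x)\,ds.
\end{equation*}
The correctors are then obtained as strong limits of $\mathcal{E}^{\pm}_{\la}$ along a subsequence.

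To establish compactness of the amplitudes, I would use that the regularity assumptions of Theorem \ref{t2} and the explicit form of $F^{\la}$ in \eqref{E} give a uniform bound of $F^{\la}$ in $L^{\infty}(0,T;H^{s-1}(\R^{3}))$; combined with the initial data bounds, this yields $\mathcal{E}^{\pm}_{\la}$ uniformly bounded in $L^{\infty}(0,T;H^{s-1})$. A direct differentiation gives $\partial_{t}\mathcal{E}^{+}_{\la}=\tfrac{1}{2i}e^{-it/\la}F^{\la}$ (and analogously for $\mathcal{E}^{-}_{\la}$), hence $\partial_{t}\mathcal{E}^{\pm}_{\la}$ is also bounded in $L^{\infty}_{t}H^{s-1}_{x}$. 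Combining the uniform $H^{s-1}$ bound with the equicontinuity in $t$ and the compact embedding $H^{s-1}\hookrightarrow H^{s'-1}_{\mathrm{loc}}$ for $s'<s$, Arzel\`a--Ascoli in $C^{0}([0,T];H^{s'-1}_{\mathrm{loc}})$ furnishes a subsequence such that $\mathcal{E}^{\pm}_{\la}\to E^{\pm}$ strongly in that space.

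Claim (c1) is then immediate: writing $\la\el_{1}=\la\el-\la\el_{2}$, the term $\la\el_{2}$ tends to $0$ in $L^{\infty}_{t}H^{s-1}$ because $\el_{2}$ is bounded by Proposition \ref{7.1}, while $T^{\la}_{\mp}(\mathcal{E}^{\pm}_{\la}-E^{\pm})\to 0$ in $C^{0}(H^{s'-1}_{\mathrm{loc}})$ since $T^{\la}_{\pm}$ acts isometrically on $H^{s'-1}$ by property (T1). For (c2) I would use $\partial_{t}W^{\la}=\el_{1}=\el-\mathcal{H}^{\la}\el$ and integrate by parts on each wave packet,
\begin{equation*}
\int_{0}^{t}\!\frac{1}{\la}e^{i\sigma/\la}\mathcal{E}^{+}_{\la}(\sigma)\,d\sigma=\frac{1}{i}e^{it/\la}\mathcal{E}^{+}_{\la}(t)-\frac{1}{i}\mathcal{E}^{+}_{\la}(0)-\frac{1}{i}\int_{0}^{t}\!e^{i\sigma/\la}\partial_{\sigma}\mathcal{E}^{+}_{\la}(\sigma)\,d\sigma,
\end{equation*}
and symmetrically for $\mathcal{E}^{-}_{\la}$. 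The crucial resonance cancellation $e^{\pm i\sigma/\la}\partial_{\sigma}\mathcal{E}^{\pm}_{\la}=\pm\tfrac{1}{2i}F^{\la}$ reduces the remainder integral to the slowly varying quantity $\int_{0}^{t}F^{\la}$, which is exactly the slow part that is subtracted off by $\int_{0}^{t}\mathcal{H}^{\la}\el\,d\sigma$ in the definition \eqref{7.1.2} of $W^{\la}$; the remaining constants of integration are fixed by the normalization $W^{\la}\rightharpoonup 0$ from Proposition \ref{7.1}(ii). What is left is the oscillatory combination $\tfrac{1}{i}T^{\la}_{-}\mathcal{E}^{+}_{\la}+\tfrac{1}{i}T^{\la}_{+}\mathcal{E}^{-}_{\la}$ (with signs chosen consistently with the decomposition used in (c1)), and the limit from the compactness step yields (c2).

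The main obstacle is precisely the bookkeeping in the proof of (c2): since $\partial_{tt}\mathcal{E}^{\pm}_{\la}$ is only of order $1/\la$, naively iterating the integration by parts does not generate an $O(\la)$ remainder, so one must critically exploit the resonance identity above and match the resulting slow residuals against the averaged piece $\mathcal{H}^{\la}\el$ encoded in $W^{\la}$ to close the argument strongly in $C^{0}(H^{s'-1}_{\mathrm{loc}})$; handling the initial data contributions and the normalization constant $W^\la\rightharpoonup 0$ is the technical core.
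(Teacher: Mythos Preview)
Your proposal is correct and follows essentially the same route as the paper. The amplitudes $\mathcal{E}^{\pm}_{\la}$ you introduce coincide with the paper's $T^{\la}_{\pm}\el_{\pm}$ (equation \eqref{7.3.2}), and your compactness argument via the uniform $L^{\infty}_{t}H^{s-1}_{x}$ bound on $\mathcal{E}^{\pm}_{\la}$ and $\partial_{t}\mathcal{E}^{\pm}_{\la}$, together with Arzel\`a--Ascoli and the compact embedding, is exactly the mechanism used in \eqref{7.3.3}--\eqref{7.3.4}; the derivation of (c1) is then identical. The only presentational difference is in (c2): instead of your integration-by-parts derivation of the slow/fast decomposition of $W^{\la}$, the paper writes down directly the analogous splitting $W^{\la}=W^{\la}_{0}+W^{\la}_{+}+W^{\la}_{-}$ from the explicit formula \eqref{7.1.2} (see \eqref{7.3.7}--\eqref{7.3.8}), runs the same compactness argument to obtain limits $W^{\pm}$, and then identifies $W^{\pm}=-iE^{\pm}$ via the algebraic relation $\FF T^{\la}_{+}\el_{+}=i\,\FF T^{\la}_{+}W^{\la}_{+}$; your ``resonance cancellation'' $e^{\pm i\sigma/\la}\partial_{\sigma}\mathcal{E}^{\pm}_{\la}=\pm\tfrac{1}{2i}F^{\la}$ is precisely this relation in differential form, and the residual $\int_{0}^{t}F^{\la}-\int_{0}^{t}\mathcal{H}^{\la}\el$ is the paper's $W^{\la}_{0}$, which is shown to vanish in $L^{\infty}_{t}H^{s-1}_{x}$.
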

\begin{proof}
We can split $\el$ in two components $\el_{+}$ and $\el_{-}$, in the following way
\begin{equation}
\hspace{-0.3cm}\FF\el_{+}(t,\xi)=\la e^{it/\la}\!\left(\frac{\FF \el(0,\xi)}{2}+\frac{\la \FF \el_{t}(0,\xi)}{2i}+\!\!
\int_{0}^{t} \!\frac{\FF F(s,\xi)}{2i\la}e^{-it/\la}ds\right)
\label{7.3.2}
\end{equation}
and we define in a  similar way  $\el_{-}$. We can easily verify that
\begin{equation}
\|T_{\pm}E^{\la}_{\pm}\|_{L^{\infty}_{t}H^{s-1}_{x}}=\|E^{\la}_{\pm}\|_{L^{\infty}_{t}H^{s-1}_{x}}\leq C, \qquad 
\|\partial_{t}\FF T_{\pm}E^{\la}_{\pm}\|_{L^{\infty}_{t}H^{s-1}_{x}}\leq C
\label{7.3.3}
\end{equation}
So we can conclude that that there exists two curl free vectors $E^{+}$, $E^{-}$ such that
\begin{equation}
T_{\pm}E^{\la}_{\pm}\rightarrow  E^{\pm} \quad \text{strongly in $C^{0}_{t}H^{s'-1}_{x,loc}$, for all $s'<s$}.
\label{7.3.4}
\end{equation}
Since we know that $T_{+}^{\la}$ is an isometry we have 
\begin{equation}
E^{\la}_{+}- T_{-}^{\la}E^{+}\rightarrow 0 \quad \text{and} \quad E^{\la}_{-}- T_{+}^{\la}E^{-}\rightarrow 0 \ \text{strongly in $C^{0}_{t}H^{s'-1}_{x,loc}$}.
\label{7.3.5}
\end{equation}
By Proposition \ref{7.1} we know that $\el=\el_{1}+\el_{2}$, so by using \eqref{7.3.5} we get
\begin{equation}
\la \el_{1}- T_{-}^{\la}E^{+}- T_{+}^{\la}E^{-}\longrightarrow 0 \quad \text{strongly in $C^{0}_{t}H^{s'-1}_{x,loc}$}
\label{7.3.6}
\end{equation}
In order to prove (c2) we use for $W^{\la}$, defined by \eqref{7.1.2}, a decomposition similar to \eqref{7.3.2}, namely
\begin{equation}
\hspace{-0.3cm}\FF W^{\la}_{+}(t,\xi)=\la e^{+ it/\la}\!\left(\frac{\FF \el(0,\xi)}{2i}-\frac{\la \FF \el_{t}(0,\xi)}{2}-\!\!
\int_{0}^{t} \!\frac{\FF F(s,\xi)}{2\la}e^{-it/\la}ds\right)
\label{7.3.7}
\end{equation}
and
\begin{equation}
\FF W^{\la}_{0}=\int_{0}^{t} \FF F(s,\xi)ds - \int_{0}^{t} \mathcal{H}^{\la} \el(s,\xi)ds,
\label{7.3.8}
\end{equation}
we define $W^{\la}_{-}$ in a similar way.
From \eqref{7.1.2}, \eqref{7.3.7}, \eqref{7.3.8} we have $W^{\la}=W^{\la}_{0}+W^{\la}_{+}+W^{\la}_{-}$. Arguing as before we have that there exists $W^{+}$ and $W^{-}$ such that
\begin{equation}
W^{\la}_{+}- T_{-}^{\la}W^{+}\rightarrow 0 \quad \text{and} \quad W^{\la}_{-}- T_{+}^{\la}W^{-}\rightarrow 0 \ \text{strongly in $C^{0}_{t}H^{s'-1}_{x,loc}$}
\label{7.3.9}
\end{equation}
and
$$W^{\la}_{0}\rightarrow 0 \ \text{strongly in $L^{\infty}_{t}H^{s-1}_{x}$}.$$
So we get that
\begin{equation}
 W^{\la}- T_{-}^{\la}W^{+}- T_{+}^{\la}W^{-}\rightarrow 0 \quad \text{strongly in $C^{0}_{t}H^{s'-1}_{x,loc}$}
\label{7.3.10}
\end{equation}
The last step is to identify who are $W^{\pm}$. Taking into account that $\FF T^{\la}_{+}\el_{+}=i \FF T^{\la}_{+}W^{\la}_{+}$ and \eqref{7.3.4} and \eqref{7.3.9} we end up with
\begin{equation}
W^{\pm}=-i E^{\pm}.
\label{7.3.11}
\end{equation}
\end{proof}

\subsection{Step 4: Equation of the correctors}
Finally in this section we finish the prove of the Main Theorem \ref{tm2} and we will able to show the equation \eqref{ceq} satisfied by the correctors. 
In order to do this we take the equation \eqref{E}
\begin{align}
\la^{2}\partial_{tt}\el+\el&=\dive\Delta^{-1}\nabla\dive\left(\rl\ul\otimes\ul +(\rl)^{\gamma}\mathbb{I}-\la^{2}\el\otimes\el\right)\notag\\
&+\frac{\la^{2}}{2}\dive\left(|\el |^{2}\mathbb{I}\right)-2\nabla\dive \ul,
\label{7.4.1}
\end{align}
we substitute the decompositions obtained in the previous sections and we send $\la$ to $0$.
Let $\phi\in\mathcal{D}((0,T)\times \R^{3})$, then we have
\begin{align}
(T^{\la}_{+}(\la^{2}\partial_{tt}\el+\el), \phi)&=(\el, \la^{2}\partial_{tt}T^{\la}_{-}\phi+T^{\la}_{-}\phi)\notag\\
&=(T^{\la}_{-}T^{\la}_{+}\el, \la^{2}\partial_{tt}T^{\la}_{-}\phi+T^{\la}_{-}\phi)\notag\\
&=(\la T^{\la}_{+}\el, \frac{1}{\la}(\la^{2}T^{\la}_{+}\partial_{tt}T^{\la}_{-}\phi+\phi)).
\end{align}
We know that $\la T^{\la}_{+}\el \rightharpoonup E^{+}$weakly in $L^{2}$ and we can compute
\begin{align}
\FF\left(\frac{\la^{2}T^{\la}_{+}\partial_{tt}T^{\la}_{-}\phi+T^{\la}_{-}\phi+\phi}{\la}\right)&=\la^{2}e^{-it/\la}\partial_{tt}(e^{it/\la}\FF \phi)+\frac{1}{\la}\FF\phi\notag\\
&=2i \partial_{t}\FF\phi +\la \partial_{tt}\FF\phi. 
\end{align}
So we have
\begin{align}
(T^{\la}_{+}(\la^{2}\partial_{tt}\el+\el), \phi)&=(\la \FF T^{\la}_{+}\el, \FF\frac{1}{\la}(\la^{2}T^{\la}_{+}\partial_{tt}T^{\la}_{-}\phi+\phi))\notag\\
&=(\la \FF T^{\la}_{+}\el, 2i \partial_{t}\FF\phi +\la \partial_{tt}\FF\phi)\notag\\
&\rightarrow (\FF E^{+}, 2i\partial_{t}\FF\phi)=-2i (\partial_{t}E^{+}, \phi).
\label{7.4.2}
\end{align}
The next term to analyze is the convective term 
$\rl\ul\otimes\ul=\rl(v^{\la}+W^{\la})\otimes (v^{\la}+W^{\la})$, it will be sufficient to analise the terms of this sort $\rl v^{\la}_{i}v^{\la}_{k}$, $\rl v^{\la}_{i}W^{\la}_{k}$, $\rl W^{\la}_{i}W^{\la}_{k}$, $i,k=1,\ldots,3$.
Since $v^{\la}_{i},\ v^{\la}_{k}$ are strongly convergent in $L^{2}$, by using the property (T2) we get
\begin{equation}
\label{7.4.3}
T^{\la}_{+}(\rl v^{\la}_{i}v^{\la}_{k})\rightharpoonup 0\quad \text{weakly in $L^{2}$}.
\end{equation}
Taking now into account (c2) we have
\begin{align}
\label{7.4.4}
&\lim_{\la\to 0}T^{\la}_{+}(\dive\Delta^{-1}\nabla\dive(\rl v^{\la}_{i}\otimes W^{\la}_{k})=\notag\\
&
-i\lim_{\la\to 0}T^{\la}_{+}((\dive\Delta^{-1}\nabla\dive(\rl v^{\la}_{i} T^{\la}_{+}E^{-}_{k}+\rl v^{\la}_{i} T^{\la}_{-}E^{+}_{k}))=I_{1}+I_{2}
\end{align}
For $I_{1}$ we have
\begin{align}
I_{1}&=-i\lim_{\la\to 0}T^{\la}_{+}(\dive\Delta^{-1}\nabla\dive(\rl v^{\la}_{i} T^{\la}_{+}E^{-}_{k}))\notag\\
& -i\lim_ {\la\to 0}(e^{-it/\la}\FF(\dive\Delta^{-1}\nabla\dive(\rl v^{\la}_{i}e^{-it/\la}\FF E^{-}_{k}))\to 0
\label{7.4.5a}
\end{align}
Concerning $I_{2}$ we have 
\begin{align}
I_{2}&=-i(\FF T^{\la}_{+}((\dive\Delta^{-1}\nabla\dive(\rl v^{\la}_{i} T^{\la}_{-}E^{+}_{k}), \FF \phi)\notag\\
&=-ie^{-it/\la}(\xi\FF(\rl v^{\la}_{i} )\ast\FF T^{\la}_{-}E^{+}_{k}, \FF \phi)=-i(\xi\FF(\rl v^{\la}_{i} )\ast\FF E^{+}_{k}, \FF \phi)\notag\\
&=-i(\dive(\rl v^{\la}_{i} E^{+}_{k}), \phi)\to -i(\dive(v_{i} E^{+}_{k}), \phi).
\label{7.4.5}
\end{align}
Finally we  estimate the term $\rl W^{\la}_{i}W^{\la}_{k}$, again we use (c2) and we obtain
\begin{align}
\lim_{\la\to 0}(T^{\la}_{+}(\rl W^{\la}_{i}W^{\la}_{k}), \phi)&=-(\FF T^{\la}_{+}[\rl(T^{\la}_{+}E^{-}+T^{\la}_{-}E^{+})_{i}(T^{\la}_{+}E^{-}+T^{\la}_{-}E^{+})_{k}], \FF \phi)\notag\\
&=\lim_{\la\to 0}(\FF(E^{+}+e^{-2it/\la}E^{-})_{i}\ast \FF(e^{it/\la}E^{+}+e^{-it/\la}E^{-})_{k}, \FF\phi)\notag\\
&=\lim_{\la\to 0}((E^{+}+e^{-2it/\la}E^{-})_{i}(e^{it/\la}E^{+}+e^{-it/\la}E^{-})_{k}, \phi)=0.
\label{7.4.6}
\end{align}
The next term that we have to estimate is $\nabla\dive \ul$, we have
\begin{align}
\lim_{\la\to 0}(T^{\la}_{+}\nabla\dive \ul, \phi)&=\lim_{\la\to 0}\left[(T^{\la}_{+}\nabla\dive v^{\la}, \phi)+(T^{\la}_{+}\nabla\dive W^{\la}, \phi)\right]\notag\\
&=-i\lim_{\la\to 0}(\FF T^{\la}_{+}(T^{\la}_{-}E^{+}+T^{\la}_{+}E^{-}), \FF \phi)\notag\\
&=-i\lim_{\la\to 0}(\xi_{i}\xi_{j}(\FF E^{+}+e^{-2it/\la}\FF E^{-}), \FF \phi)\notag\\
&=-i(\nabla\dive E^{+}, \phi).
\label{7.4.7}
\end{align}
By using \eqref{7.2.4a} we get that $\dive\Delta^{-1}\nabla\dive((\rl)^{\gamma}\mathbb{I})$ converges strongly to zero. It remains only to estimate the two terms $\dive\Delta^{-1}\nabla\dive(\la^{2}\el\otimes\el)$ and $\dive(|\la \el|^{2}\mathbb{I})$. Since the arguments works on a similar way we estimate only the first one. Let us denote by $\mathcal{A}$ the operator $\dive\Delta^{-1}\nabla\dive$ and by $a(\xi)$ its principal symbol. Then we have
\begin{align}
&(T^{\la}_{+}\mathcal{A}(\la^{2}\el\otimes\el), \phi)=\notag\\
(e^{-it/\la}a(\xi)\FF(T^{\la}_{-}E^{+}&+T^{\la}_{+}E^{-})\ast \FF(T^{\la}_{-}E^{+}+T^{\la}_{+}E^{-}), \FF\phi)=0.
\label{7.4.8}
\end{align}
If we sum up \eqref{7.4.2},  \eqref{7.4.3},  \eqref{7.4.2},  \eqref{7.4.5},  \eqref{7.4.5a},  \eqref{7.4.6},  \eqref{7.4.7},  \eqref{7.4.8} we end up with the following equation for the corrector $E^{+}$
\begin{equation}
\label{7.4.9}
\partial_{t} E^{+}+\dive(v \otimes E^{+})-\nabla\dive E^{+}=0, \qquad PE^{+}=0.
\end{equation} 
Moreover, by projecting the equation \eqref{7.4.1} in the divergence free space and following the same steps as berfore we get as $\la$ goes to $0$ the following relation for $E^{+}$
\begin{equation}
P\dive(v \otimes E^{+})=0.
\label{7.4.10}
\end{equation}
As a consequence of \eqref{7.4.9} and \eqref{7.4.10} we get that $E^{+}$ satisfies the following parabolic equation
\begin{equation}
\label{7.4.11}
\partial_{t} E^{+}-\Delta E^{+}+ Q\dive(v \otimes E^{+})=0.
\end{equation}
On order to obtain the equation satisfied by $E^{-}$ we can follow step by step what we have done for $E^{+}$.
From the previous paragraph it is clear that the equation \eqref{7.4.11} holds in the sense of distribution, in the next proposition we can establish a more precise result on the existence of solution for \eqref{7.4.11}, in particular we will see that the kernel of the Leray projector $P$ is an invariant subspace for the flow of the equation \eqref{7.4.11}.
\begin{proposition}
\label{p8}
Let us consider the correctors equation
\begin{equation}
\partial_{t}E^{\pm}-\Delta E^{\pm}+Q\dive(v \otimes E^{\pm})=0,
\label{7.4.12}
\end{equation}
where $v\in L^{\infty}(0,T;H^{s}(\R^{3}))$, $3/2\leq s\leq 2$ and the initial data satisfy
\begin{equation}
E^{\pm}(0)\in L^{2}(\R^{3}), \qquad PE^{\pm}(0)=0.
\label{7.4.13}
\end{equation}
Then the Cauchy problem \eqref{7.4.12}-\eqref{7.4.13} has a unique solution $E^{\pm}\in L^{\infty}(0,T;L^{2}(\R^{3}))$, such that  $PE^{\pm}(\cdot, t)=0$, for any $t\in [0,T]$.
\end{proposition}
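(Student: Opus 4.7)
I split the argument into two parts: invariance of the constraint $PE^{\pm}(\cdot,t)=0$ and existence/uniqueness in $L^{\infty}(0,T;L^{2}(\R^{3}))$. For the invariance, applying the Leray projector $P$ to the equation and using the orthogonality $PQ=0$ together with $[P,\Delta]=0$ yields $\partial_{t}(PE^{\pm})-\Delta(PE^{\pm})=0$; combined with $PE^{\pm}(0)=0$, uniqueness for the linear heat equation in $L^{2}(\R^{3})$ forces $PE^{\pm}(\cdot,t)\equiv 0$. Hence it suffices to construct the solution within the closed subspace $\ker P\subset L^{2}(\R^{3};\R^{3})$, which coincides with the range of $Q$; the constraint will then be preserved automatically.

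For the a priori bound, I work in $\ker P$, so that $E=QE$, and combine self-adjointness of $Q$ on $L^{2}$ with this identity to rewrite
$$\int Q\dive(v\otimes E)\cdot E\,dx=\int\dive(v\otimes E)\cdot QE\,dx=\int\dive(v\otimes E)\cdot E\,dx.$$
Using $\dive(v\otimes E)=(v\cdot\nabla)E+(\dive v)E$ and an integration by parts on the convective piece, this equals $\tfrac{1}{2}\int(\dive v)|E|^{2}\,dx$. Testing \eqref{7.4.12} with $E$ in $L^{2}$ therefore produces the energy identity
$$\frac{1}{2}\frac{d}{dt}\|E\|_{L^{2}}^{2}+\|\nabla E\|_{L^{2}}^{2}=-\frac{1}{2}\int(\dive v)\,|E|^{2}\,dx.$$
Since $3/2\leq s\leq 2$, the Sobolev embedding $H^{s-1}(\R^{3})\hookrightarrow L^{3}(\R^{3})$ (critical at $s=3/2$) gives $\dive v\in L^{\infty}_{t}L^{3}_{x}$. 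Combining H\"older with the interpolation $\|E\|_{L^{3}}^{2}\leq\|E\|_{L^{2}}\|E\|_{L^{6}}\lesssim\|E\|_{L^{2}}\|\nabla E\|_{L^{2}}$ and Young's inequality bounds the right-hand side by $\|\nabla E\|_{L^{2}}^{2}+C\|\dive v\|_{L^{3}}^{2}\|E\|_{L^{2}}^{2}$; absorption on the left and Gronwall then deliver $\|E(t)\|_{L^{2}}\leq\|E(0)\|_{L^{2}}\exp\bigl(C\int_{0}^{T}\|\dive v\|_{L^{3}}^{2}\,ds\bigr)$ on $[0,T]$, together with $E\in L^{2}(0,T;H^{1}(\R^{3}))$.

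To realise this estimate I set up a Faedo-Galerkin scheme on a Hilbert basis $\{e_{k}\}$ of $\ker P\cap H^{1}(\R^{3};\R^{3})$; because $Q\dive(v\otimes\cdot)$ leaves $\ker P$ invariant, the approximate unknowns stay in $\ker P$ and the identity above is available at the discrete level. The Galerkin ODEs are linear with locally integrable coefficients (since $v\in L^{\infty}_{t}H^{s}_{x}$) and thus uniquely solvable on $[0,T]$; the uniform bounds just derived permit weak/weak-$*$ limits and Aubin-Lions-type compactness (Theorem~\ref{S}) to pass to the limit in every term of \eqref{7.4.12}. Uniqueness follows from linearity: the difference of two solutions satisfies \eqref{7.4.12} with zero initial datum, and the same energy identity combined with Gronwall forces it to vanish. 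The main obstacle is the low regularity of $v$, which in the worst case $s=3/2$ fails to be $L^{\infty}$; this is bypassed by the structure of the energy identity, where only $\dive v$ appears and is controlled via the critical embedding $H^{1/2}\hookrightarrow L^{3}$ available in three dimensions.
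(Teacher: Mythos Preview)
Your argument is correct but follows a different strategy from the paper. The paper's proof is a one-liner: it rewrites \eqref{7.4.12} in Duhamel form
\[
E(t)=e^{t\Delta}E(0)-\int_{0}^{t}e^{(t-s)\Delta}Q\dive\bigl(v(s)\otimes E(s)\bigr)\,ds
\]
and appeals to a standard contraction argument in $C([0,T];L^{2})$, the smoothing of the heat semigroup absorbing the divergence (this is exactly the content of the cited exercises in \cite{Tay81}). You instead run an energy method on the invariant subspace $\ker P$, exploiting the cancellation that reduces the coupling term to $\tfrac{1}{2}\int(\dive v)|E|^{2}$, and close by Galerkin approximation plus Gronwall. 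The fixed-point route is shorter and sidesteps the critical embedding entirely; your approach, on the other hand, yields the extra regularity $E\in L^{2}(0,T;H^{1})$ and makes explicit why the threshold $s\ge 3/2$ (i.e.\ $\dive v\in H^{1/2}\hookrightarrow L^{3}$) is the natural one. Two minor points: when you apply Young's inequality you should keep a factor strictly less than $1$ in front of $\|\nabla E\|_{L^{2}}^{2}$, otherwise absorption leaves nothing and the $L^{2}_{t}H^{1}_{x}$ bound you subsequently claim does not follow; and since the equation is linear in $E$ with $v$ fixed, weak convergence of the Galerkin approximants already suffices to pass to the limit, so the appeal to Theorem~\ref{S} is unnecessary (though harmless).
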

\begin{proof}
The proof follows by rewriting \eqref{7.4.12} in integral form and by using a standard fixed point argument (for more detail see \cite{Tay81}, Chapter IV, Exercises 7.8 and 7.9).
\end{proof}

 
\bibliographystyle{amsplain}

\end{document}